\newcommand{\mc}[1]{\mathcal{#1}}
\newcommand{\LR}{\Leftrightarrow}
\newcommand{\A}{\mathcal{A}}
\newcommand{\B}{\mathcal{B}}
\renewcommand{\phi}{\varphi}
\def\endref{\end{enumerate}\end{small} }
\newtheorem{theorem}{Theorem}
\newtheorem{definition}{Definition}
\newtheorem{proposition}{Proposition}
\newtheorem{lemma}{Lemma}
\newtheorem{corollary}{Corollary}
\begin{document}
\subjclass{03C57}
\keywords{computable structures, linear ordering, boolean algebra, computable categoricity,bi-embeddability}
\title{On bi-embeddable categoricity of algebraic structures}
\author{Nikolay Bazhenov}
\address{Sobolev Institute of Mathematics, 4 Acad. Koptyug Ave. and Novosibirsk State University, 2 Pirogova St., Novosibirsk, 630090, Russia}
\thanks{Bazhenov and Zubkov are supported by the RSF grant (project No. 18-11-00028).}
\email{bazhenov@math.nsc.ru}

\author{Dino Rossegger}
\address{Department of Pure Mathematics, University of Waterloo, 200 University Ave West, Waterloo, Ontario, Canada}
\email{dino.rossegger@uwaterloo.ca}

\author{Maxim Zubkov}
\address{Kazan Federal Univerity, 35 Kremlevskaya Str., Kazan, Russia}
\email{maxim.zubkov@kpfu.ru}
\begin{abstract}
In several classes of countable structures it is known that every hyperarithmetic structure has a computable presentation up to bi-embeddability. In this article we investigate the complexity of embeddings between bi-em\-bed\-dable structures in two such classes, the classes of linear orders and Boolean algebras. We show that if $\mc L$ is a computable linear order of Hausdorff rank $n$, then for every bi-embeddable copy of it there is an embedding computable in $2n-1$ jumps from the atomic diagrams. We furthermore show that this is the best one can do: Let $\mc L$ be a computable linear order of Hausdorff rank $n\geq 1$, then $\mathbf 0^{(2n-2)}$ does not compute embeddings between it and all its computable bi-embeddable copies. We obtain that for Boolean algebras which are not superatomic, there is no hyperarithmetic degree computing embeddings between all its computable bi-embeddable copies. On the other hand, if a computable Boolean algebra is superatomic, then there is a least computable ordinal $\alpha$ such that $\mathbf 0^{(\alpha)}$ computes embeddings between all its computable bi-embeddable copies. The main technique used in this proof is a new variation of Ash and Knight's pairs of structures theorem.
\end{abstract}

\maketitle

\sloppy
\rm

\section{Introduction}
	It is well-known that arbitrary isomorphic algebraic structures $\mathcal{A}$ and $\mathcal{B}$ possess the same algebraic properties. In contrast to this fact, the \emph{algorithmic properties} of $\mathcal{A}$ and $\mathcal{B}$ can be strikingly different. One of the first examples of this phenomenon was witnessed by Mal'tsev~\cite{Mal62}: While the standard recursive copy $\mathcal{A}$ of $\mathbb{Q}^{\omega}$ (i.e. the divisible torsion-free abelian group of countably infinite rank) has an algorithm for linear dependence, Mal'tsev built a copy $\mathcal{B}$ of $\mathbb{Q}^{\omega}$ with no such algorithm.

	These kinds of algorithmic discrepancies motivated a plethora of questions,  shaping modern computable structure theory. One of these questions can be (informally) stated as follows: What is the simplest possible presentation of a given structure $\mathcal{A}$? Or more concretely, given some countable structure $\mathcal{A}$, which is ``hard to compute'', is it possible to find a copy $\mathcal{B} \cong \mathcal{A}$ such that $\mathcal{B}$ is ``computationally more simple'' than $\mathcal{A}$ itself?

 Quite remarkably, one of the pioneering results of computable structure theory deals with the questions above: In 1955, Spector~\cite{spector1955} showed that every hyperarithmetic well-order is isomorphic to a computable one.

In turn, this classic result raises a new question~--- How hard is it to compute isomorphisms between well-orders? Ash~\cite{Ash86} answered this question by showing that if $\alpha$ is such that $\omega^{\delta+n} \leq \alpha < \omega^{\delta+n+1}$, where $\delta$ is either $0$ or a limit ordinal and $n\in \omega$, then for any two orderings $\A$ and $\B$ of order type $\alpha$, there is a $\Delta^{\A\oplus \B}_{\delta+2n}$ computable isomorphism, but there are orderings $\hat \A$ and $\hat \B$ of this order type such that for $\beta<\delta+2n$, $\Delta^{\A\oplus\B}_{\beta}$ does not compute any isomorphisms.
Recently, Alvir and Rossegger~\cite{alvir2018} generalized this result to arbitrary scattered linear orderings by giving precise bounds on the complexity of Scott sentences in this class. Note that in their result one loses effectiveness, however, obtaining bounds on the complexity of the isomorphisms in the Borel hierarchy.

By using the fact that any countable superatomic Boolean algebra is isomorphic to the interval algebra of an ordinal $Int(\alpha)$, one can obtain similar results to those of Ash for superatomic Boolean algebras~\cite{Ash87,AK00}. In general, there is a large body of literature dealing with the algorithmic complexity of isomorphisms: We refer the reader to the surveys \cite{Ash-98,FHM-14} and the monograph~\cite{AK00} for the results on Turing degrees of isomorphisms. For the subrecursive complexity of isomorphisms, the reader is referred to the surveys~\cite{CR-98,Mel-17,BDKM-19}.
%\todo{check ref}

One weakening of the notion of isomorphism is \emph{bi-embeddability} or, as it is sometimes called in the literature, \emph{equimorphism}:
\begin{definition}
  Two structures are \emph{bi-embeddable} if either is isomorphic to a substructure of the other.
\end{definition}
Given two structures $\A$ and $\B$ we write $\A \hookrightarrow \B$ to denote that $\A$ is embeddable in $\B$, i.e., that there is a 1-1 structure preserving map from $\A$ to $\B$. We also write $f:\A\hookrightarrow \B$ to say that $f:A\rightarrow B$ is an embedding of $\A$ in $\B$. If $\A\hookrightarrow \B$ and $\B\hookrightarrow \A$, i.e., if $\A$ and $\B$ are bi-embeddable, then we write $\A\approx \B$.

%An \emph{embedding} between linear orderings $\mathcal{L}$ and $\mathcal{Q}$ is a one-to-one,
%order preserving map $f : L \rightarrow Q$. If this is the case, we write $f : \mathcal{L} \hookrightarrow \mathcal{Q}$
%and we write $\mathcal{L} \preceq \mathcal{Q}$ to mean that $\mc L$ embeds in $\mc Q$. $\mathcal{L}$ and $\mathcal{Q}$ are \emph{bi-embeddable} if
%$\mathcal{L} \preceq \mathcal{Q}$ and $\mathcal{Q} \preceq \mathcal{L}$, in which case we write $\mathcal{L} \approx \mathcal{Q}$.

Montalb\'an~\cite{montalban2005} was able to obtain an analogue to Spector's result for all linear orders with respect to bi-embeddability.
\begin{theorem}[Montalb\'an]
  Every hyperarithmetic linear order is bi-embeddable with a computable one.
\end{theorem}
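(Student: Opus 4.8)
The plan is to follow the template of Spector's theorem that every hyperarithmetic well-order is isomorphic to a computable one, but working up to bi-embeddability and substituting the structure theory of countable linear orders for the (trivial) structure theory of ordinals. Three ingredients are needed: a normal-form theorem assigning to every countable linear order a bi-embeddable \emph{canonical form} together with an ordinal rank; an effective transfinite recursion producing, from a canonical form of rank below $\omega_1^{CK}$, a computable linear order of the same bi-embeddability type; and a boundedness argument showing that a hyperarithmetic linear order always admits a bi-embeddable canonical form of rank below $\omega_1^{CK}$.

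For the structural ingredient I would invoke the classification of countable linear orders under embeddability due to Jullien and Laver: every countable linear order is bi-embeddable with a finite sum of \emph{indecomposable} linear orders, and the countable indecomposable linear orders are classified up to bi-embeddability by canonical forms built by transfinite recursion from the singleton, using the operations of $\omega$-sum, $\omega^*$-sum, and finite dense shuffle $\sigma(\,\cdot\,)$. Each canonical form is a finite term over canonical forms of strictly smaller rank, so there are only countably many of each rank and they can be listed uniformly from a notation for that rank. For an arbitrary countable linear order $\mathcal L$ let $\mathrm{rk}(\mathcal L)$ be the least rank of a canonical form bi-embeddable with $\mathcal L$; the structure theorem guarantees $\mathrm{rk}(\mathcal L)$ is a well-defined countable ordinal.

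For the effective ingredient, observe that $\omega$-sums, $\omega^*$-sums, finite dense shuffles, and finite sums of a uniformly computable list of computable linear orders can themselves be assembled, uniformly and computably, into a single computable linear order. Hence by effective transfinite recursion along Kleene's $\mathcal O$ there is a computable function which, given a notation for an ordinal $\alpha$, outputs a uniformly computable list of computable linear orders realizing exactly the bi-embeddability types of the canonical forms of rank at most $\alpha$ (the base case being the finite orders). In particular every canonical form of rank below $\omega_1^{CK}$ has a computable copy.

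For the boundedness ingredient — the only place the hyperarithmetic hypothesis is really used — one shows that the map $\mathcal L \mapsto \mathrm{rk}(\mathcal L)$ behaves like a $\Pi^1_1$-norm on the (Borel, hence $\Pi^1_1$) set of countable linear orders: the relation $\mathrm{rk}(\mathcal L) \le \mathrm{otp}(W)$ is $\Sigma^1_1$ in $(\mathcal L, W)$, because it asserts the existence of a canonical form of rank at most $\mathrm{otp}(W)$ bi-embeddable with $\mathcal L$, and the decomposition analysis witnessing this is a $\Delta^1_1$-bounded recursion. By the effective boundedness theorem, a hyperarithmetic linear order then has $\mathrm{rk}(\mathcal L) < \omega_1^{CK}$, exactly as a hyperarithmetic well-order has order type below $\omega_1^{CK}$. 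Combining the three ingredients finishes the proof: given hyperarithmetic $\mathcal L$, pick a canonical form $\mathcal N$ bi-embeddable with $\mathcal L$ of rank $\alpha < \omega_1^{CK}$, fix a notation for $\alpha$, and apply the function above to obtain a computable $\mathcal M$ bi-embeddable with $\mathcal N$, hence with $\mathcal L$. I expect the genuine obstacle to be the structural ingredient: establishing the classification of indecomposable countable linear orders up to bi-embeddability together with the reduction of an arbitrary linear order to a finite sum of indecomposables, and — crucially for the boundedness step — verifying that this classification is effective enough (countably many, uniformly listable canonical forms per rank, with computable sum and shuffle operations). Once that is in hand, the transfinite recursion is routine and the boundedness step is a standard application of $\Sigma^1_1$-boundedness.
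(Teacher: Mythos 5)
The paper does not actually prove this theorem; it is quoted from Montalb\'an's 2005 paper and used as a black box (the surrounding machinery of signed trees, h-indecomposable orders $lin(T)$, and Jullien's finite decomposition into indecomposables, recalled in Section~2, is exactly the toolkit of that proof). Your outline matches the architecture of Montalb\'an's argument: split off the non-scattered case (any non-scattered countable order is bi-embeddable with $\eta$, hence with a computable order), reduce a scattered order to a finite sum of indecomposables, replace each indecomposable by a canonical form of bounded rank, realize canonical forms of rank below $\omega_1^{CK}$ computably by effective transfinite recursion, and use $\Sigma^1_1$-boundedness to get the rank bound from hyperarithmeticity. One small point: for the bi-embeddability classification the shuffle operation is not needed --- the canonical forms for scattered indecomposables are the $lin(T)$ built from $\omega$- and $\omega^*$-sums alone, with the dense case disposed of separately.

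Two genuine thin spots remain. First, the boundedness step as stated does not quite work: knowing that $\{W : \mathrm{rk}(\mathcal{L})\le \mathrm{otp}(W)\}$ is $\Sigma^1_1$ bounds nothing. The standard fix is to exhibit a $\Sigma^1_1(\mathcal{L})$ set of \emph{linear orders} --- e.g.\ those $W$ for which a computable power construction $\omega^{W}$ embeds into $\mathcal{L}$ --- all of whose members are forced to be well-orders by the scatteredness of $\mathcal{L}$, apply $\Sigma^1_1$-boundedness to that set, and then relate the resulting ordinal to the Hausdorff rank. Second, as you yourself flag, the ``structural ingredient'' is not a citable lemma but the substance of the theorem: Jullien's decomposition rests on Laver's better-quasi-ordering theorem, and Montalb\'an's contribution is precisely an effective, rank-respecting version of it (every scattered order of Hausdorff rank $\alpha$ is equimorphic to a finite sum of h-indecomposables whose signed trees have rank at most $\alpha$, with the equimorphisms controlled well enough for the recursion and boundedness steps to engage). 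Deferring that point defers essentially all of the work, so the proposal should be read as a correct roadmap rather than a proof.
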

Greenberg and Montalb\'an subsequently obtained the same result for Boolean algebras, abelian groups, and compact metric spaces~\cite{GreenMon}. For the class of equivalence structures one can even obtain that every equivalence structure is bi-embeddable with a computable one~\cite{fokina2019a}. These remarkable results raise the question of how hard it is to compute embeddings between bi-embeddable structures.

In this article we obtain partial answers to these questions by calculating the complexity of embeddings between bi-embeddable countable Boolean algebras and linear orders of finite Hausdorff rank.

To do this we study the notions of relative bi-embeddable categoricity and degree of bi-embeddable categoricity. These notions are analogues of well studied notions for isomorphism, see~\cite{AKMS-89,Chi-90,fokina2010,CFS-2013,FHM-14}. %\todo{ref}.
\begin{definition}
  A countable (not necessarily computable) structure $\A$ is \emph{relatively $\Delta^0_\alpha$ bi-embeddably categorical} if for any bi-embeddable copy $\B$, $\A$ and $\B$ are bi-embeddable by  $\Delta^{\A \oplus \B}_\alpha$ embeddings. A computable structure is relatively computably bi-embeddably categorical if $\alpha=1$.
\end{definition}
\begin{definition}
  Let $\A$ be a computable structure. If $\mathbf d$ is the least Turing degree computing embeddings between any two computable structures bi-embeddable with $\A$, then $\mathbf d$ is the \emph{degree of bi-embeddable categoricity} of $\A$.
\end{definition}
Computable bi-embeddable categoricity and degrees of bi-embeddable categoricity were studied in a more general context in~\cite{bazhenov2020}. Notice that a structure may not have a degree of bi-embeddable categoricity. As we will discuss later in this article, even natural examples such as the order type of the rational numbers, $\eta$, and the atomless Boolean algebra do not have a degree of bi-embeddable categoricity. In related work, Bazhenov, Fokina, Rossegger, and San Mauro~\cite{bazhenov2018a} studied the complexity of embeddings between equivalence structures and showed that every computable equivalence relation has either degree of bi-embeddable categoricity $\mathbf 0$, $\mathbf 0'$ or $\mathbf 0''$.

Our main results show that linear orders of finite Hausdorff rank $n$ are relatively $\Delta^0_{2n+2}$ bi-embeddably categorical, but not relatively $\Delta^0_{2n+1}$, and that all computable superatomic Boolean algebras have a degree of bi-embeddable categoricity depending on their Frech\'et rank. We present the necessary preliminaries and our results about linear orders in \cref{sec:lin}. In \cref{sec:ba} we present our results about Boolean algebras. We give a short summary about $\alpha$-systems and then prove a variation of Ash and Knight's~\cite{AK90} pairs of structure theorem which we will use to calculate the degrees of bi-embeddable categoricity for superatomic Boolean algebras.

\section{Linear orders}\label{sec:lin}

\subsection{Preliminaries}

A linear order $\mc L$ is given by a pair $(L,\leq_{\mc L})$, where $L$ is a set and $\leq_{\mc L}$ is a binary relation on $L$ satisfying the usual axioms of linear orders. If $\mc L$ is infinite, then we assume that $L=\omega$.
Given $\mc L$, we let $<_\mc L$ be the induced strict ordering, i.e. for all $x,y\in L$, we have $x<_\mc L y$ if and only if $x\leq_\mc L y$ and $y\not\leq_\mc L x$. We will also use interval notation: $[x,y]_\mc L=\{ z: x\leq_\mc L z \leq_\mc L y \}$; and we make use of the following additional relations on linear orders.
\begin{itemize}
   \item The \emph{successor relation} $S_\mc L$ given by
   \[ S_\mc L(x,y) \ \LR\  x<_\mc L y \ \land\ \forall z (z\leq_\mc L x \lor y\leq_\mc L z),\]
   \item and the \emph{block relation} $F_\mc L(x,y)$ defined by
   \[ F_\mc L(x,y) \ \LR\ \begin{cases} [x,y]_\mc L \text{ is finite} &\text{ if }x\leq_\mc L y; \\ [y,x]_\mc L\text{ is finite}& \text{ if }y\leq_\mc L x.\end{cases}\]
\end{itemize}
We will drop subscripts if the order is clear from context.

It is not hard to see that the block relation is an equivalence relation on $\mc L$ which agrees with $\leq_\mc L$, and that it is definable by a computable $\Sigma^0_2$ formula in $L_{\omega_1,\omega}$. We call its equivalence classes \emph{blocks} and denote the block of $x\in L$ as $[x]_\mc L$. As the block relation agrees with the ordering, we can take the quotient structure and obtain the \emph{factor ordering} or \emph{condensation}. It is denoted by $\mc L/F_\mc L$ and defined as usual by $[x]_\mc L \leq_{\mc L/F_\mc L} [y]_\mc L \LR x\leq_\mc L y$.

Taking condensations can be iterated finitely often in an obvious way~--- by factoring through the block relation of the previous condensation. In order to obtain a notion of iterated condensation for all ordinals, we define the $\alpha$-block relation for all ordinals $\alpha$.

\begin{definition}
  Given a linear order $\mc L$, the \emph{$\alpha$-block relation $F_{\mc L}^\alpha$} on $\mc L$ is defined by induction as follows. Let $[x]_\mc L^\alpha$ denote the $F_{\mc L}^\alpha$-equivalence class of $x\in L$. Then for $x,y\in L$,
  \begin{enumerate}
    \item $F^0_\mc L(x,y)\LR x=y$,
    \item if $\alpha=\beta+1$, then $F^\alpha_\mc L(x,y)\LR F_{\mc L/F^\beta_\mc L}([x]^\beta_\mc L, [y]^\beta_\mc L)$, and
    \item if $\alpha$ is a limit ordinal, then $F^\alpha_\mc L(x,y)\LR (\exists \beta < \alpha) F_{\mc L/F^\beta_\mc L}(x,y)$.
  \end{enumerate}
\end{definition}

It is not hard to see that for a finite $\alpha$, $\mc L/F^\alpha_\mc L$ agrees with taking condensations iteratively $\alpha$ times, and that $F^\alpha_\mc L$ is $\Sigma^0_{2\alpha}$ definable. To simplify notation, we set $\mc L^{(\alpha)}_F=\mc L/F^\alpha_\mc L$.

\begin{definition}
  The \emph{Hausdorff rank} of a linear order $\mc L$ is the least $\alpha$ such that $\mc L^{(\alpha)}_F=\mc L^{(\alpha+1)}_F=1$.
\end{definition}

As usual, we will identify with $\omega$ the order type of the natural numbers, with $\zeta$ the order type of the integers, and with $\eta$ the order type of the rationals. The unique finite order type with precisely $n$ elements is denoted by $\mathbf n$. We let $\mathbf 0$ stand for the empty ordering. Further, if $\mc L$ is a linear order, then $\mc L^*$ is its reverse ordering, i.e., $x\leq_{\mc L^*} y\LR y\leq_\mc L x$.

\begin{definition}
  A linear order is \emph{scattered} if it has no subordering of order type $\eta$.
\end{definition}

\begin{definition}
  The class $\mathbf{VD}$ of linear orderings is defined by
  \begin{enumerate}
    \item ${\bf VD}_0=\{0,\,1\}$,
    \item ${\bf VD}_\alpha = \bigg\{ \sum\limits_{i\in\tau}\mathcal{L}_i \,\colon  \mathcal{L}_i\in \bigcup\limits_{\beta<\alpha}{\bf VD}_\beta,\,\tau\in \{\omega,\omega^*,\zeta\} \bigg\}$, and
    \item $\mathbf{VD}=\bigcup\limits_{\alpha}\bf{VD}_\alpha$.
  \end{enumerate}
  The \emph{$VD$-rank} of a linear order $\mc L$ is the least $\alpha$ such that $\mc L\in \mathbf{VD}_\alpha$; and the \emph{$VD^*$-rank} of $\mc L$ is the least $\alpha$ such that $\mc L$ is a finite sum of linear orders in $\mathbf{VD}_\alpha$.
\end{definition}

The following theorem due to Hausdorff is well known.
\begin{theorem}
  A countable linear order is scattered if and only if it has countable VD-rank. Furthermore, the $VD$-rank of a scattered linear order is equal to its Hausdorff rank.
\end{theorem}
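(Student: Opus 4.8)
The plan is to establish both halves of the equivalence and the rank equality at once, by transfinite induction relating the $\mathbf{VD}$-hierarchy to the iterated finite condensation $\mc L\mapsto\mc L/F_\mc L$. Two elementary facts will do the heavy lifting: (i) $\omega$, $\omega^*$ and $\zeta$ are scattered, and any linear order obtained from one of them by passing to a suborder and then to a convex quotient is again a ``single block'', i.e.\ has all its closed intervals finite; and (ii) conversely, a single block is one of $\mathbf n$, $\omega$, $\omega^*$, $\zeta$ (separate the cases according to whether the order has a least and/or a greatest element).

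For the direction ``countable $VD$-rank implies scattered'', which I strengthen to ``$\mc L\in\mathbf{VD}_\alpha$ implies Hausdorff rank $\le\alpha$'', I would induct on $\alpha$, the base case being immediate. In the step, write $\mc L=\sum_{i\in\tau}\mc L_i$ with $\tau\in\{\omega,\omega^*,\zeta\}$ and each $\mc L_i$ of $VD$-rank $<\alpha$. First, no copy of $\eta$ embeds into $\mc L$: if $D\subseteq\mc L$ with $D\cong\eta$, then each open interval $(a,b)_D$ with $a<_D b$ is again $\cong\eta$, so if two points of $D$ lay in the same summand $\mc L_i$ that copy of $\eta$ would embed into $\mc L_i$, contradicting the induction hypothesis; and if no two points of $D$ share a summand then mapping a point to the index of its summand embeds $\eta$ into $\tau$, which is impossible. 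Second, for the rank bound: by the induction hypothesis each $\mc L_i$ has Hausdorff rank at most some $\beta<\alpha$, uniformly in $i$ when $\alpha$ is a successor, so at the $\beta$-th condensation each $\mc L_i$ has collapsed (within $\mc L$, by convexity) to at most a point; hence $\mc L^{(\beta)}_F$ is a single block by (i), and $\mc L^{(\beta+1)}_F=1$. When $\alpha$ is a limit, instead note that any finitely many consecutive collapsed summands already merge at a condensation stage below $\alpha$, so $\mc L^{(\alpha)}_F=1$. Either way the Hausdorff rank is $\le\alpha$.

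For the converse, I first observe that a scattered countable order has countable Hausdorff rank: the relations $F^\beta_\mc L$ form an increasing, limit-continuous chain of equivalence relations on the countable set $L$, hence stabilize at a countable stage $\gamma$, where $\mc M:=\mc L^{(\gamma)}_F$ satisfies $\mc M/F_\mc M=\mc M$; then every nonempty open interval of $\mc M$ is infinite, and repeatedly bisecting produces a copy of $\eta$ inside $\mc M$ unless $|\mc M|\le 1$. A transversal of the $F^\gamma$-blocks embeds $\mc M$ into $\mc L$, so scatteredness forces $\mc M=1$ and the Hausdorff rank of $\mc L$ is $\le\gamma$. Now I show, by induction on the Hausdorff rank $\alpha$, that a scattered $\mc L$ of Hausdorff rank $\alpha$ lies in $\mathbf{VD}_\alpha$. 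If $\alpha=\gamma+1$, then $\mc L^{(\gamma)}_F$ is a single block, hence one of $\mathbf n,\omega,\omega^*,\zeta$ by (ii); writing $\mc L=\sum_{p\in\mc L^{(\gamma)}_F}B_p$ over the $F^\gamma$-blocks and noting that each $B_p$ is convex and therefore satisfies $(B_p)^{(\gamma)}_F=1$, the induction hypothesis gives $B_p\in\mathbf{VD}_{\le\gamma}$, so $\mc L$ is an $\omega$-, $\omega^*$- or $\zeta$-indexed sum (or, after padding with empty summands, a finite sum) of members of $\mathbf{VD}_{\le\gamma}$, i.e.\ $\mc L\in\mathbf{VD}_{\gamma+1}$. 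If $\alpha$ is a limit, fix $a\in\mc L$ and a sequence $\beta_n\uparrow\alpha$; the blocks $[a]^{\beta_n}_\mc L$ are convex, increasing, and exhaust $\mc L$ since $\mc L^{(\alpha)}_F=1$, and each has Hausdorff rank $<\alpha$, hence $VD$-rank $<\alpha$; decomposing $\mc L$ into the successive convex differences of these blocks presents it as a sum, indexed by an order embeddable in $\zeta$, of orders of $VD$-rank $<\alpha$, so $\mc L\in\mathbf{VD}_\alpha$.

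Combining the two directions yields the characterization of scatteredness and the equality of the $VD$-rank and the Hausdorff rank. I expect the main obstacle to be the limit-ordinal bookkeeping in both directions: checking that the condensation really collapses completely at a limit stage, and that the cofinal block decomposition lands in $\mathbf{VD}_\alpha$ rather than $\mathbf{VD}_{\alpha+1}$ --- here one crucially uses that $\alpha$ is a limit, so that absorbing finitely many pieces of $VD$-rank $<\alpha$ into a neighbouring piece keeps the rank below $\alpha$. The ``no copy of $\eta$'' argument and the successor steps are routine.
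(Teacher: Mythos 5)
The paper does not prove this statement at all: it is quoted as a classical theorem of Hausdorff (``the following theorem \dots is well known''), so there is no in-paper argument to compare yours against. Your blind proof is, however, a correct and essentially standard one: the double induction relating the $\mathbf{VD}$-hierarchy to the iterated finite condensation is exactly how the result is usually established (cf.\ Rosenstein's treatment), and the two directions together do yield both the scatteredness characterization and the equality of the $VD$- and Hausdorff ranks. The points you flag as delicate are the right ones, and they all go through: the chain $(F^\beta_{\mc L})_\beta$ is increasing and continuous at limits, hence stabilizes countably on a countable domain; for a convex $C\subseteq\mc L$ one has $F^\beta_C\supseteq F^\beta_{\mc L}\restriction C$ (so convex subsets don't increase Hausdorff rank) and, when $C$ is a union of $F^\beta_{\mc L}$-classes, equality (so the blocks $B_p$ in your successor step really satisfy $(B_p)^{(\gamma)}_F=1$); and a convex quotient of a suborder of $\omega$, $\omega^*$ or $\zeta$ is again a single block. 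Two small remarks. First, in the limit case of the forward direction it is worth saying explicitly why any two points are $F^\alpha$-related: the interval between them meets only finitely many summands, each of which collapses by some stage $\gamma_k<\alpha$, so one more condensation at stage $\max_k\gamma_k+1<\alpha$ merges them --- your phrase ``finitely many consecutive collapsed summands already merge below $\alpha$'' is this argument and is correct. Second, in the limit case of the converse you do not actually need the absorption trick you worry about: since $0\in\mathbf{VD}_0$, you can pad the $\zeta$-embeddable index order with empty summands to make it literally $\omega$, $\omega^*$ or $\zeta$, which places $\mc L$ in $\mathbf{VD}_\alpha$ directly.
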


A linear order $\mathcal{L}$ is \emph{indecomposable} if whenever $\mathcal{L}=\mathcal{A}+\mathcal{B}$, we have that $\mathcal{L}$ can be embedded in either $\mathcal{A}$ or $\mathcal{B}$. It is not hard to prove that a linear order bi-embeddable with an indecomposable one is also indecomposable.

A \emph{signed tree} is a pair $\langle T,s_T\rangle$, where $T$ is a well-founded subtree of $\omega^{<\omega}$ (i.e. a downwards closed subset of $\omega^{<\omega}$ with no infinite paths), and $s_T$ is a map, called \emph{sign function}, from $T$ to $\{+,-\}$. We will usually write $T$ instead of $\langle T,s_T\rangle$.
Given a signed tree $T$, let $T_{\sigma}$ be the subtree of $T$ with root $\sigma$.

We associate with every signed tree $T$ a linear order $lin(T)$ whose order type is defined by recursion as follows. If $T=\{\emptyset\}$ and $s_T(\emptyset)=+$, then $lin(T)=\omega$. If $T = \{\emptyset\}$ and $s_T(\emptyset)=-$, then $lin(T)=\omega^*$. Now, assume that $lin(T_{\sigma})$ has been defined for $\sigma\in T\setminus\{ \emptyset\}$. If $s_T(\emptyset)=+$, then
\[ lin(T)\cong \sum_{i\in \omega}\sum_{j\leq i} lin(T_{\langle j\rangle}).\]
If $s_T(\emptyset)=-$, then
\[ lin(T)\cong \sum_{i\in \omega^*}\sum_{j\leq i} lin(T_{\langle j\rangle}).\]

Linear orders of order type $lin(T)$ for some signed tree $T$ are called \emph{h-in\-de\-com\-po\-sable linear orders}. Montalb\'an~\cite{montalban2006} showed that every indecomposable linear order is bi-embeddable with an h-indecomposable linear order. Furthermore, for an indecomposable linear order $\mathcal{L}$ of finite Hausdorff rank, the rank of the signed tree associated with its bi-embeddable h-indecomposable order is equal to the Hausdorff rank of $\mathcal{L}$.

%%%+++

\subsection{Upper bounds}

In this section we give an upper bound on the complexity of embeddings between two bi-embeddable linear orders of finite Hausdorff rank.
% To accomplish this we study the following notion introduced in~\cite{bazhenov2020}. To improve readability let for every $X\subseteq \omega$ and computable ordinal $\alpha$
% \[ X_{(\alpha)}=\begin{cases} X^{(\alpha)} & \text{if } \alpha \text{ infinite,}\\
% X^{(\alpha-1)}& \text{otherwise}
% \end{cases}.\]
%
% \begin{definition}
%   A structure $\mc S$ is \emph{uniformly $\Delta^0_\alpha$ bi-embeddably categorical} if there exists a Turing operator $\Phi$ such that for all $\A_i\approx \mc S$ with $i<2$, $\Phi^{{(\mc A_0 \oplus \mc A_1)}_{(\alpha)}}$ is an embedding of $\A_0$ into $\A_1$.
% \end{definition}
Notice that there is only one non-scattered order type up to bi-embeddability~--- the ordering of the rational numbers, $\eta$. It is not relatively $\Delta^0_\alpha$ bi-embeddably categorical for \emph{any} computable ordinal $\alpha$. To see this, fix a standard copy of $\eta$ (in which one can compute an infinite decreasing sequence) and a copy of the Harrison ordering $H=\omega_1^{\mathrm{CK}}(1+\eta)$ without hyperarithmetic infinite decreasing sequences. Any embedding of $\eta$ into $H$ will compute an infinite decreasing sequence, so there can be no hyperarithmetic embedding.

Results of Montalb{\'a}n show that any hyperarithmetic linear order, and, moreover, any linear order of computable Hausdorff rank is bi-embeddable with a computable one. This section and the next one provide sharp bounds on the complexity of embeddings between linear orders of finite Hausdorff rank.

\begin{lemma}\label{uniformoperators}
  Fix $n\in \omega$. There are Turing operators $\Phi$ and $\Psi$ such that if $\A$ and $\B$ are of VD-rank $n$, $a,b\in \omega$, $\circ\in \{<,>,\leq,\geq,(,[\}$ and $m\leq n$
  \begin{enumerate}
    \item $\Phi^{(\A\oplus \B)^{(2m+1)}}(a,b,m,\circ)=\begin{cases} 1&\text{ if } [\circ a]^m\hookrightarrow[\circ b]^m \\
    0&\text{ otherwise}\end{cases}$,
    \item $\Psi^{(\A\oplus \B)^{(2m)}}(a,b,m,\circ,-)=\begin{cases}[\circ a]^m\hookrightarrow [\circ b]^m & \text{if } \Phi^{(\A\oplus \B)^{(2m+1)}}(a,b,m,\circ)=1\\
    \uparrow &\text{otherwise}
  \end{cases}$%\todo{{\color{blue}Please check whether here really should be $(\A\oplus \B)^{2m+2}$, or is it just a misprint?}}

  \end{enumerate}
  where $[\circ x]^m$ is the subordering on $\{y: y\circ x \ \&\ y\in [x]^m\}$ if $\circ\in\{<,>,\leq,\geq\}$ and
    $[(\ x]^m$ is the subordering on $\{y: y\in (x_1,x_2]\}$ if $x=\langle x_1,x_2\rangle$ with $x_1\in [x_2]^m$ and $[>x_1]$ if $x_1\not\in [x_2]^m$. Similarly, $[[\ x]^m$ is the subordering on $\{y: y\in [x_1,x_2]\}$ if $x=\langle x_1,x_2\rangle$ with $x_1\in [x_2]^m$ and $[>x_1]$ if $x_1\not\in [x_2]^m$.
\end{lemma}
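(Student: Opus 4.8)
The plan is to prove the lemma by induction on $m\leq n$, constructing the pair $(\Phi,\Psi)$ for level $m$ from the pair for level $m-1$. I would actually prove a strengthening in which the operators decide, resp.\ construct, embeddings between arbitrary convex subsets of a single $m$-block of $\A$ and of $\B$ (the empty set, pieces of $(m-1)$-blocks, the one-sided intervals $[\circ a]^m$, finite and $\omega$- or $\omega^*$-like sums of $(m-1)$-blocks, etc.); all eight cases of the statement are instances of this. The base case $m=0$ is trivial: since $F^0$ is equality, every relevant set is empty or a singleton and the operators can be taken constant.

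For the inductive step I fix $m\geq 1$. The structural fact driving everything is that the $(m-1)$-condensation of an $m$-block $[c]^m$ is exactly the ordinary block $[[c]^{m-1}]$ of $\mc L/F^{m-1}$, hence a suborder of $\zeta$; consequently any convex $I\subseteq[c]^m$ decomposes canonically as
\[ I\;=\;I_{\mathrm{left}}\;+\;\Big(\textstyle\sum_{k\in J}D_k\Big)\;+\;I_{\mathrm{right}}, \]
where $I_{\mathrm{left}},I_{\mathrm{right}}$ are convex subsets of single $(m-1)$-blocks, each $D_k$ is a full $(m-1)$-block, and the index order $J$ lies in $\{\mathbf 0,\mathbf 1,\mathbf 2,\dots\}\cup\{\omega,\omega^*,\zeta\}$ (and is finite, $\omega$, or $\omega^*$ for the eight intervals in the statement). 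Because $F^m$ is $\Sigma^0_{2m}$ and $F^{m-1}$ is $\Sigma^0_{2m-2}$, the oracle $(\A\oplus\B)^{(2m)}$ — a fortiori $(\A\oplus\B)^{(2m+1)}$ — computes both relations; hence it can produce this decomposition for the convex subsets of $[a]^m$ and $[b]^m$ in question, determine the order type of each index set, and, invoking the induction hypothesis (whose operators run with oracles $(\A\oplus\B)^{(2m-1)}$ and $(\A\oplus\B)^{(2m-2)}$, both below $2m$), decide and construct embeddings between arbitrary convex subsets of single $(m-1)$-blocks.

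It then remains to lift this from ``pieces of $(m-1)$-blocks'' to ``$\zeta$-like (in the one-sided situations, $\omega$- or $\omega^*$-like) arrays of $(m-1)$-blocks with boundary pieces''. For $\Phi$, after the finite combinatorial data supplied by $(\A\oplus\B)^{(2m)}$ is fixed, I expect embeddability of one such array into another to reduce to a statement with a single existential quantifier over $(\A\oplus\B)^{(2m)}$ (roughly: there is a finite initial matching after which the surviving tail embeds below a fixed point of the target), hence to be $\Sigma^0_1$ over $(\A\oplus\B)^{(2m)}$ and so decidable from $(\A\oplus\B)^{(2m+1)}$. For $\Psi$, under the promise that an embedding exists, I would run a greedy matching from the bounded side outward, using the level-$(m-1)$ operators to locate and to realize each piece; since the sub-queries involved are decidable from $(\A\oplus\B)^{(2m-1)}$ and the promise guarantees termination, this runs with the oracle $(\A\oplus\B)^{(2m)}$. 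The hard part will be exactly this lifting step: pinning down the embeddability characterization for arrays of blocks — which needs a careful case analysis on whether each array is finite or infinite and on how the ``genuinely $(m-1)$-dimensional'' blocks are distributed within it (condensation is not functorial for embeddings, so the image of a piece can straddle infinitely many target blocks) — and, for $\Psi$, designing the greedy rule so that it never strands a later piece while staying within $(\A\oplus\B)^{(2m)}$; the naive ``push each piece as far as possible'' rule must be refined. The remaining points — uniformity of $\Phi,\Psi$ in $a,b,m,\circ$, the reduction of the two-sided cases $[(\ x]^m$ and $[[\ x]^m$, and the verification that all oracle queries stay at the advertised jump levels — are routine.
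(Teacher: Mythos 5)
Your overall strategy coincides with the paper's: induction on $m$, decomposition of an $m$-block into an array of $(m-1)$-blocks indexed by its condensation, use of the level-$(m-1)$ operators on pieces of $(m-1)$-blocks, and one extra jump to decide the array-level question. However, the step you explicitly defer --- ``pinning down the embeddability characterization for arrays of blocks'' and ``designing the greedy rule'' --- is the entire mathematical content of the inductive step, and your proposal does not supply it; as written, the proof has a gap exactly where the work lies. One of your stated worries is also resolvable and needs to be resolved for the quantifier count to come out right: in the one-sided infinite case (say $[\geq a]^m/F^{m-1}\cong\omega$), the image of a single source $(m-1)$-block \emph{cannot} straddle infinitely many target $(m-1)$-blocks, because elements of distinct $(m-1)$-blocks must map to distinct $(m-1)$-blocks, so the infinitely many later source blocks would have nowhere to go. This finiteness is what makes the characterization first-order over the lower-level operators.

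Concretely, the paper's inductive step (for the representative case $\circ={\geq}$ with both condensations of type $\omega$) lists the $(m-1)$-blocks of source and target as $a_0,a_1,\dots$ and $b_0,b_1,\dots$, and shows that $[a_0]^{m-1}$ can be accommodated iff there are finitely many split points $a_0^0<\dots<a_0^p$ in the source block and points $b^0<\dots<b^p$ inside a \emph{single} target block $[b_j]^{m-1}$ such that $[\leq a_0^0]^{m-1}\hookrightarrow[\leq b^0]^{m-1}$, each $(a_0^i,a_0^{i+1}]\hookrightarrow(b^i,b^{i+1}]$, and $[>a_0^p]^{m-1}\hookrightarrow[>b^p]^{m-1}$; these are exactly the queries your level-$(m-1)$ operators answer, so the condition for a fixed $j$ is decidable from $(\A\oplus\B)^{(2m)}$ (one number quantifier over a $(\A\oplus\B)^{(2m-1)}$-decidable matrix). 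The greedy rule is: take the \emph{least} such $j$, then recurse on $a_1$ restricted to target blocks strictly above $[b_j]^{m-1}$; an exchange argument shows this leftmost choice never strands a later piece. $\Phi$ then needs one further universal quantifier over the index $i$ of the source blocks, landing at $(\A\oplus\B)^{(2m+1)}$, while $\Psi$, under the promise that an embedding exists, realizes each splitting via the level-$(m-1)$ $\Psi$ and stays at $(\A\oplus\B)^{(2m)}$. Finally, note that the induction is anchored at $m=1$, not $m=0$: there $[\circ a]^1$ is of type finite, $\omega$, $\omega^*$ or $\zeta$, embeddability coincides with isomorphism, and the counts $2m+1$ (to decide) and $2m$ (to construct) come from the fact that membership in $[a]^1$ is already $\Sigma^0_2$. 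Without this base case and the explicit splitting characterization, the advertised jump levels cannot be verified.
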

 \begin{proof}
 %At initial, since $\mathcal{L}^0_1$, $\mathcal{L}^0_2$ are maximal $\alpha$-blocks, then $\mathcal{L}_i^1$ and $\mathcal{L}_i^2$ has rank $\alpha$. The proof is carried out by the  transfinite induction.
 The proof is by induction on $m$. Fix $a,b\in\omega$ and, as for $m=0$ the lemma trivializes, assume that $m=1$. Consider $[\circ a]^1$ and $[\circ b]^1$. The two orders can be of the following types: finite, $\omega$, $\omega^*$ or $\zeta$. It is easy to see that they are bi-embeddable if and only if they are isomorphic. We can determine whether the two orderings are isomorphic and subsequently define an embedding between them by checking whether they have a first and, or, last element, calculating their size in case they are finite, and calculating their successor relation. All of this can be done within three jumps over the diagrams, for example the following formula is satisfied by $\A$ if $[>a]^1$ has a greatest element:
 \begin{equation} \exists  y \forall x (x\in [a]^1 \rightarrow x\leq y)\label{form:greatestele}\end{equation}
 As the relation $x\in [a]^1$ is $\Sigma^0_2$, the formula is $\Pi^0_3$ and thus, using $(\A\oplus \B)^{(3)}$ as an oracle we can decide whether it is true or not.
 It is straightforward to define $\Phi$ by evaluating the formulas corresponding to the statements mentioned above so that $\Phi^{(\A\oplus \B)^{(3)}}(a,b,1,\circ)=1$ iff $[\circ a]^1\hookrightarrow [\circ b]^1$.

 Defining an embedding is even easier and only takes two jumps over the diagrams in case that $[\circ a]^1\hookrightarrow [\circ b]^1$. Assume $\circ$ equals $>$, the other cases are symmetric. As $[\circ a]^1$ must have a least element, the unique element $c$ satisfying the following formula:
 \[ c\in [a]^1\land c>a\land \forall y ((y\in [a]^1 \land y>a )\rightarrow y \geq c)\]
 This formula is a conjunction of a computable $\Sigma^0_2$ formula and a computable $\Pi^0_2$ formula and therefore within two jumps over $\A$ we can find the least element. Likewise, we can find the least element $d$ in $[\circ b]^1$. Now let $s_\A$, $s_\B$ be the successor function on $\A$ and $\B$ respectively, then we define $\Psi$ by
 \[\Psi^{(\A\oplus\B)^2}(a,b,1,>,x)=\begin{cases}
 y, & \exists k s_\A^k(c)=x \land s_\B^k(d)=y\\
 \uparrow, & \text{otherwise}
\end{cases}.\]
 It is not hard to see that $\Psi$ defined like this is a well-defined computable operator.
 %
 % We can determine which types has linear orders $\mathcal{L}_1$ and $\mathcal{L}_2$ by using the oracle $(\mathcal{B}_1\oplus \mathcal{B}_2)^{(2\cdot1+1)}$ in the following way. First, determine is there exists the least or the greatest elements in $\mathcal{L}_i$. There are the following cases:
 %
 % \begin{itemize}
 %
 % \item If $\mathcal{L}_i$  has the least and the greatest elements then it is finite and we can find all elements.
 %
 % \item If $\mathcal{L}_i$  has the least and has no the greatest elements then it has type $\omega$.
 %
 % \item If $\mathcal{L}_i$  has no the least and has the greatest elements then it has type $\omega^*$.
 %
 % \item If $\mathcal{L}_i$  has no the least and has no the greatest elements then it has type $\zeta$.
 %
 % \end{itemize}
 %
 % For any combinations of types it is obvious whenever $\mathcal{L}_1$ embeddable to $\mathcal{L}_2$.
 %
 %
 %
 % The construction of an embedding is straightforward. For example, if $\mathcal{L}_1$ has the least element and has no the greatest element and $\mathcal{L}_2$ has no the least element and has no the greatest element, then $\mathcal{L}_1$ has the order type  $\omega$ and $\mathcal{L}_2$ has the order type  $\zeta$. An embedding
 % can be constructed in the following way. Suppose that $x_1$ --- the least element of $L_1$, and $x_2$ --- the least as natural number element of $L_2$. Then we define isomorphism as
 % $\varphi(S^m_0(x_1))=S^m_0(x_2)$ for all $m\in \mathds{N}$, where $S^m_0$ --- is $m$-th iteration of the successor function.
 %
 %
 %

 Assume that the theorem holds for all $k<n$. We can establish the order type of $[\circ a]^n/F^{n-1}$ and $[\circ b]^n/F^{n-1}$ in similar fashion as in the base case by checking whether there are least and, or, greatest elements and the size of the orders in case they are finite. For instance to check whether $[>a]^n$ has a greatest element we only need to replace $x\in [a]^1$ by $x\in [a]^n$ in \cref{form:greatestele}. The resulting formula is $\Sigma^0_{2n+1}$ and thus in $2n+1$ jumps over the diagrams we can evaluate it and the formulas defining the other properties.

 This is however not enough to say that $[\circ a]^n\hookrightarrow [\circ b]^n$. We will describe in detail the case when $\circ$ is $\geq$ and $[\geq a]^n/F^{n-1}\cong[\geq b]^n/F^{n-1}\cong\omega$. The other cases are symmetric.
 We can pick the least natural number from each of the $(n-1)$-blocks %\todo{{\color{blue} An ambiguous notation: Does $[\geq a]^n$ have precisely $n-1$ different blocks? Should it be ``$(n-1)$-blocks''?}}
 in $[\geq a]^n$, respectively $[\geq b]^n$ and write them in order, i.e.,
 \begin{align*}
   a_0\quad & a_1\quad & a_2\quad & a_3\quad & a_4\quad & a_5\quad & a_6\quad & a_7\quad & a_8\quad & a_9\quad &\dots\\
   b_0\quad & b_1\quad & b_2\quad & b_3\quad & b_4\quad & b_5\quad & b_6\quad & b_7\quad & b_8\quad & b_9\quad &\dots
 \end{align*}
 We have to find embeddings of the $(n-1)$-blocks of $a_i$ into $[\geq b]^n$ preserving order. We can do this inductively as follows:

 We have that $[\geq a]^{n}\hookrightarrow [\geq b]^n$ only if there are $m$, $a_{0}^0<\dots <a_0^k=a_0<\dots <a_0^m$ such that $a_0^i\in [a_0]^{n}$ for all $i$, and $b^0<\dots < b^m\in [b_j]^{n-1}$ for the least $j$ such that
 \[ [\leq a_0^0]^{n-1}\hookrightarrow [\leq b^0]^{n-1}\land (a_0^0,a_0^1] \hookrightarrow (b^0,b^1] \land \dots \land [>a_0^m]\hookrightarrow [>b^m]^{n-1},\]
 where all the half-open intervals are contained in an $(n-1)$-block.
 The existence of such a sequence follows from the facts that images under an embedding of elements from different $(n-1)$-blocks must be in different $(n-1)$-blocks, and that the image of an $(n-1)$-block must lie in a finite set of $(n-1)$-blocks.

 By hypothesis evaluating the above statement can be done uniformly in $(\A\oplus \B)^{(2(n-1)+2)}$. If the statement is true, then we proceed with $a_1$ in place of $a_0$ and by restricting our search for elements $b^i$ to elements in blocks greater than $[b_j]^{n-1}$. %\todo{{\color{blue}Should be $[b^m]^{n-1}$?}}
 If the statement is false, then we stop and conclude that $[\geq a]^n$ is not embeddable into $[\geq b]^n$.

 Now it is easy to see that $[\geq a]^n\hookrightarrow [\geq b]^n$ if and only if the above statement holds for all $a_i$, and to verify this uniformly we need another jump, i.e., $(\A\oplus \B)^{(2n+1)}$. Using this as an oracle we can define $\Phi$ as required.

 We can define $\Psi$ by induction in the same manner as we verified whether $[\geq a]^{n} \hookrightarrow [\geq b]^n$. First obtain the elements $a_0,\dots$ and $b_0,\dots$ as above. Then given $x$ determine the $a_i$ such that $x\in [a_i]^{n-1}$. First find the least $j$ satisfying the condition described above for $a_0$, then for $a_1$, and so on to obtain the interval of $[\geq b]^n$ in which $x$ embeds if such an interval exists. If it exists then use the hypothesis to define the embedding, otherwise stay undefined. %\todo{{\color{green}Remark here, that $\Psi$ can be undefined only in case $[\geq a]^n/F^{n-1}\cong[\geq b]^n/F^{n-1}\cong\zeta$.} This is not true in the way I build it now. {\color{green} I see. You are right.}}

 Given the arguments above this can be done from $(\A\oplus \B)^{(2(n-1)+2)}=(\A\oplus \B)^{(2n)}$.
\end{proof}

\begin{theorem}
Suppose that $\mathcal{L}$ is a scattered linear order of finite $VD^*$-rank $n$. Then it is relatively $\Delta^0_{2n}$ bi-embeddably categorical.
\end{theorem}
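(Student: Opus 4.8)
The plan is to bootstrap from \cref{uniformoperators} by decomposing $\mc L$ into its condensation structure and gluing together the local embeddings produced by $\Phi$ and $\Psi$. First I would observe that since $\mc L$ has $VD^*$-rank $n$, the top condensation $\mc L/F^n_{\mc L}$ is a finite linear order, say with $k$ elements; these $k$ points are the $n$-blocks $B_1 <_{\mc L} \dots <_{\mc L} B_k$ of $\mc L$, and each $B_j$ is an indecomposable-ish piece of $VD$-rank $\le n$. Given a bi-embeddable copy $\B$ of $\mc L$, its top condensation must also be finite; using $(\mc L\oplus\B)^{(2n)}$ (indeed $(\mc L\oplus\B)^{(2n-1)}$ suffices to read off the finite condensations, since $F^n$ is $\Sigma^0_{2n}$ but a \emph{finite} $\Sigma^0_{2n}$ set is decidable one jump earlier) we locate these blocks on both sides and obtain representatives $a_1,\dots,a_k$ in $\mc L$ and $b_1,\dots,b_{k'}$ in $\B$.

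Next I would set up the matching between blocks. Since $\mc L \approx \B$, each $n$-block $B_j$ of $\mc L$ embeds into some $n$-block of $\B$ and conversely; using the operator $\Phi$ from \cref{uniformoperators} with the closed-interval argument $[\,[\,x]^n$ and with $m=n$, oracle $(\mc L\oplus\B)^{(2n+1)}$, we can decide for each pair $(j,j')$ whether $[a_j]^n \hookrightarrow [b_{j'}]^n$. Here I hit the first subtlety: \cref{uniformoperators} as stated costs $2n+1$ jumps to \emph{decide} embeddability but only $2n$ jumps to \emph{produce} the embedding once we know it exists. So the strategy must avoid deciding embeddability at the top level and instead use a greedy/order-theoretic argument: because the top condensations are finite and the pieces are linearly ordered, there is a canonical way to choose which block maps where — essentially a finite back-and-forth on the finite orders $\mc L/F^n_{\mc L}$ and $\B/F^n_{\B}$ together with the known bi-embeddability — and this choice can be made $\Delta^0_{2n}$ from the finite data already extracted, without invoking $\Phi$ at level $n$. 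Concretely: walk left to right through $a_1,\dots,a_k$, and maintain a pointer into $b_1,\dots,b_{k'}$; for the current $a_j$ find the least remaining $b_{j'}$ into whose $n$-block $[a_j]^n$ embeds, but verify this using the $m=n-1$ instance of $\Phi$ (oracle $(\mc L\oplus\B)^{(2(n-1)+1)}=(\mc L\oplus\B)^{(2n-1)}$) applied after reducing each $n$-block to its penultimate condensation, exactly as in the inductive step of the proof of \cref{uniformoperators}. The bi-embeddability hypothesis guarantees the greedy procedure never gets stuck.

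Finally I would assemble the global embedding. Once blocks are matched, on each matched pair $([a_j]^n,[b_{j'}]^n)$ invoke $\Psi$ at level $n$ — but $\Psi$ at level $n$ only needs oracle $(\mc L\oplus\B)^{(2n)}$ — to get an order-preserving map of that block into the target block, and take the union over $j$. The unions glue to an embedding $\mc L \hookrightarrow \B$ because consecutive blocks of $\mc L$ are sent to (weakly) increasing, distinct blocks of $\B$; a symmetric construction yields $\B\hookrightarrow\mc L$. All oracle use is bounded by $(\mc L\oplus\B)^{(2n)}$, which is $\Delta^0_{2n+1}$-computable but in fact the procedure queries the $2n$-th jump only finitely often in a way that can be packaged as a single $\Delta^0_{2n}$ computation — here I would be careful and, if the clean bound $\Delta^0_{2n}$ is not immediate, fall back to noting that the finite top-level data costs $2n-1$ jumps and the per-block work costs $2n-2$, so the whole thing is $\Delta^0_{2n}$. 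The \textbf{main obstacle} is precisely this bookkeeping at the top condensation: making sure that choosing the block-matching does not secretly cost the extra jump that $\Phi$ at level $n$ would cost, which is why the argument must exploit finiteness of $\mc L/F^n_{\mc L}$ and the greedy left-to-right matching rather than a brute-force search over all block pairings.
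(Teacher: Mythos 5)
Your overall architecture matches the paper's: decompose into finitely many top-level pieces, fix the matching non-uniformly, and exploit the fact that bi-embeddability guarantees the relevant embeddings exist so that the expensive \emph{decision} operator $\Phi$ at level $n$ (costing $2n+1$ jumps) is never invoked. But the step that is supposed to deliver the bound fails: you conclude by "invoking $\Psi$ at level $n$ with oracle $(\mathcal{L}\oplus\mathcal{B})^{(2n)}$", and a function computable from the $2n$-th jump of the diagrams is $\Delta^0_{2n+1}$, not $\Delta^0_{2n}$ --- one jump more than the theorem claims (and the claimed bound is sharp by the lower-bound theorem, so this jump cannot be given away). Neither of your proposed repairs closes this gap: the $2n$-th jump is not queried "only finitely often" --- $\Psi$ at level $n$ consults it for each of the infinitely many elements of a block, since locating, for each $(n-1)$-block of the source, the interval of the target it must go into is part of every evaluation; and the fallback accounting ("per-block work costs $2n-2$ jumps") omits exactly that locating step, which genuinely costs $2n-1$ jumps (a terminating search over the $\Phi$-operator at level $n-1$). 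The missing idea, which is the actual content of the theorem beyond \cref{uniformoperators}, is to unwind the top level of the lemma's induction by hand: since an embedding of $[\geq a]^n$ into $[\geq f(a)]^n$ is known to exist, one replaces the level-$n$ decision procedure by a search for cut points and target $(n-1)$-blocks that is guaranteed to succeed, verified with $\Phi$ at level $n-1$ (oracle $(\mathcal{A}\oplus\mathcal{B})^{(2n-1)}$, hence $\Delta^0_{2n}$), and only then fills in the maps with $\Psi$ at level $n-1$ (oracle $(\mathcal{A}\oplus\mathcal{B})^{(2n-2)}$). That is how the extra jump is shaved off.

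A secondary problem is your greedy block-matching: deciding whether $[a_j]^n\hookrightarrow[b_{j'}]^n$ cannot be "verified using the $m=n-1$ instance of $\Phi$", because it is a genuine level-$n$ question --- a universal condition ranging over all of the infinitely many $(n-1)$-blocks of $[a_j]^n$ --- and that universal quantifier is precisely where the extra jump in $\Phi$ at level $n$ comes from. Since the statement is about \emph{relative} categoricity, no computation is needed here at all: fix an abstract embedding $f\colon\mathcal{A}\hookrightarrow\mathcal{B}$, choose representatives $a_j$, and set $b_j:=f(a_j)$; this hard-codes the (finite) block matching, and it additionally guarantees that the splits are compatible, i.e. $[<a_j]^n\hookrightarrow[<b_j]^n$ and $[\geq a_j]^n\hookrightarrow[\geq b_j]^n$, a point your choice of arbitrary representatives $b_{j'}$ does not address. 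This non-uniform anchoring (together with the uniqueness of minimal decompositions for the $VD^*$ case) is exactly how the paper disposes of the top level.
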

\begin{proof}
	% Say $\mathcal L$ has $VD^*$ rank $n$, every bi-embeddable copy of it must also have $VD^*$ rank $n$ and furthermore if $\A\approx \B\approx \mathcal L$ and $\A=\A_1+\dots +\A_n$ where each $\A_i$ is of $VD$ rank less or equal than $n$, then we have $\B=\B_1+\dots +\B_n$ where $VD(\B_i)=VD(\A_i)$ and $\A_i\approx \B_i$ for all $i\leq n$. In particular we can obtain elements $a_i^0$, $a_i^1$ and $b_i^0$, $b_i^1$ such that $[<a_i^0]^n\hookrightarrow [<b_i^0]^n$ and $[\geq a_i^1]^n\hookrightarrow [\geq b_i^1]^n$. Fixing these elements non-uniformly we can define an embedding $\A\hookrightarrow \B$ using $\Psi$ from \cref{uniformoperators} from $(\A\oplus \B)^{2n}$. This implies that $\mathcal L$ is $\Delta^0_{2n+1}$ bi-embeddable categorical.

  First consider the case where $\mc L$ has $VD$-rank $n$. Every bi-embeddable copy of it must also have $VD$-rank $n$. Let $\A$, $\mc B$ be bi-embeddable with $\mc L$ and fix $a\in A$. Then there is an embedding $f\colon \A\hookrightarrow \B$ sending $a$ to $f(a)$. In particular, $[<a]^n\hookrightarrow [<f(a)]^n$ and $[\geq a]^n\hookrightarrow [\geq f(a)]^n$. We show how to construct a $\Delta^{\A\oplus\B}_{2n}$ embedding of $[\geq a]^n\hookrightarrow [\geq f(a)]^n$ in case that $[\geq a]^n/F^{n-1}\cong \omega$. It is easy to adapt the construction for the case when $[\geq a]^n/F^{n-1}\cong \mathbf m$ for some $m\in\omega$.

	We proceed similarly as in the proof of \cref{uniformoperators}. First we obtain a representation of $[\geq a]^n/F^{n-1}\cong \omega$ and $[\geq f(a)]^n/F^{n-1}$ using the least elements in each block. Thus we obtain ordered sequences $a_0,\dots$ and $b_0,\dots$. We then inductively define the embedding. The fact that $[\geq a]^n\hookrightarrow [\geq f(a)]^n$ implies that there are $m$, $a_{0}^0<\dots <a_0^k=a_0<\dots <a_0^m$ such that $a_0^i\in [a_0]^{n-1}$ for all $i$, and $b^0<\dots < b^m\in [b_j]^{n-1}$ for the least $j$ such that
 \[ [\leq a_0^0]^{n-1}\hookrightarrow [\leq b^0]\land (a_0^0,a_0^1] \hookrightarrow (b^0,b^1] \land \dots \land [>a_0^m]^{n-1}\hookrightarrow [>b^m]^{n-1},\]
 where all the half-open intervals are contained in an $(n-1)$-block. We pick the least $j$ such that for $[b_j]^{n-1}$ this condition holds. Now, obtaining the sequences $a_0,\dots$ and $b_0,\dots$ is $\Delta^{\A\oplus \B}_{2n}$ (the sequence is described by finite conjunctions of computable $\Pi^0_{2n-1}$ and $\Sigma^0_{2n-1}$ sentences). We then know that the condition is satisfied for $a_0$ and using $\Phi$ from \cref{uniformoperators} finding $m$ and the elements $a_0^i$ and $b^i$ is $\Delta^{\A\oplus\B}_{2(n-1)+2}=\Delta^{\A\oplus\B}_{2n}$. Using $\Psi$ from \cref{uniformoperators}, we can then define the embedding $a_0\hookrightarrow [b_0,b_j)$. Having defined an embedding of $[a_0]^{n-1}+\dots+ [a_i]^{n-1}$ into $[b_0,b_l)$ for some $l$, we can define the an embedding for $[a_{i+1}]^{n-1}$ similarly to the case of $a_0$ with the difference that we restrict our search to elements in blocks greater than the block of $b_l$.

 This clearly yields a $\Delta^{\A\oplus \B}_{2n}$ embedding of $[\geq a]$ into $[\geq f(a)]$. We can define a similar embedding to embed $[<a]$ into $[<f(a)]$ and thus also obtain a $\Delta^{\A\oplus \B}_{2n}$ embedding of $\A$ into $\B$.

 Now, say $\mathcal L$ has $VD^*$ rank $n$. Then every bi-embeddable copy of it must also have $VD^*$ rank $n$ and furthermore if $\A\approx \B\approx \mathcal L$ and $\A=\A_1+\dots +\A_n$ where each $\A_i$ is of $VD$ rank less or equal than $n$, then we have $\B=\B_1+\dots +\B_n$ where $VD(\B_i)=VD(\A_i)$ and $\A_i\approx \B_i$ for all $i\leq n$. Fix elements $a_i$ and $b_i$ from each of the $\A_i$, respectively $\B_i$. Then it is not hard to see that using the same strategy as in the case where $\mathcal L$ has $VD$ rank $n$, we can define an embedding. The only thing we need to change is that if $x\in [a_i]^n$, then we need to define the embedding for $x$ by only considering elements in $[b_i]^n$. This is again $\Delta^0_{2n}$.
\end{proof}

% \begin{proof}

% Case $n=1$. Is obvious.
%
% Case $n>1$.
% If $\mathcal{L}'$, $\mathcal{L}''$ bi-embeddable with $\mathcal{L}$ then $\mathcal{L}'=\mathcal{L}'_1+\ldots+\mathcal{L}'_k$, $\mathcal{L}''=\mathcal{L}''_1+\ldots+\mathcal{L}''_k$, where $\mathcal{L}'_i,\,\mathcal{L}''_i$ have $VD$-rank is equal to $n$ and $\mathcal{L}'_i$ bi-embeddable to $\mathcal{L}''_i$.   We can fix $t'_i\in \mathcal{L}'_i $ and $t''_{i}\in \mathcal{L}''_i$ such that there is embedding $f$ of  $\mathcal{L}'$  into $\mathcal{L}''$ with $f(t'_i)=t''_i$.
%
% Assume that $\mathcal{L}'_{{n-1},i}$ and $\mathcal{L}''_{{n-1},i}$ has type $\omega$.
%
% Define $\mathcal{L}'_{n-2,\,i,\,j} = [l'_{i,\,i}]^{\beta}_{\mathfrak{B}_i}$, where $l'_{i,\,j}$ --- the least natural number from the first $n-1$-block of $\mathcal{L}'_i$%, ���� $= [\leq t_i]^{\alpha+k}_{\mathfrak{B}_i}$, ���� $= [\geq t_i]^{\alpha+k}_{\mathfrak{B}_i}$,
% We will similarly define  $\mathcal{L}''_{n-2,\,i,\,j}$.
%
%  Let $\mathcal{L}'_{n-2,\,i,\,s+1}$, be the $n-1$-block, located immediately to the right from $\mathcal{L}'_{n-2,\,i,\,s}$. We need the oracle $0^{2n-1}$ to construct such sequence. We will similarly define  $\mathcal{L}''_{n-2,\,i,\,s}$.  We proceed further as in the proof of theorem \ref{uniform} using theorem \ref{uniform} instead of the induction hypotheses.
%
% Other cases of types $\mathcal{L}'_{{n-1},i}$ and $\mathcal{L}''_{{n-1},i}$ can be considered in the same way.
% \end{proof}

\subsection{Lower Bounds}

In this section we prove the following theorem.
\begin{theorem} \label{thm:lowerboundlo}
  Suppose that $\mathcal{L}$ is a computable scattered linear order of finite Hausdorff rank $n+1$.
  Then there are computable linear orders $\mathcal{G}$, $\mathcal{B}$ bi-embeddable with $\mathcal{L}$ such that there is no $\Delta^0_{2n+1}$-embedding of $\mathcal{G}$ into $\mathcal{B}$. %\todo{{\color{blue} The rank is $n+1$, but we defeat $\Delta^0_{2n+1}$ embeddings. Is it OK? Should we consider $\Delta^0_{2n+3}$ embeddings?}}
\end{theorem}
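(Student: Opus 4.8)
The plan is to construct $\mathcal G$ and $\mathcal B$ so that an embedding between them is forced to decide, for infinitely many blocks, a $\Sigma^0_{2n+1}$-complete amount of information, so that no $\Delta^0_{2n+1}$ function can uniformly produce such an embedding. Since $\mathcal L$ has Hausdorff rank $n+1$, by Hausdorff's theorem it is a countable sum of indecomposable linear orders of Hausdorff rank $\leq n+1$, and there must be infinitely many summands of rank exactly $n+1$ (otherwise the condensation $\mathcal L^{(n)}_F$ would already be finite and the rank would be at most $n$), or the relevant part of $\mathcal L$ can be analyzed locally. By Montalb\'an's results quoted in the preliminaries, each such indecomposable summand is bi-embeddable with an h-indecomposable order $lin(T)$ where $T$ is a signed tree of rank $n+1$. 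The key building block will be two specific h-indecomposable orders $\mathcal U$, $\mathcal V$ of rank $n+1$ that are bi-embeddable with each other but for which ``$\mathcal U \hookrightarrow \mathcal V$ restricted to a prescribed initial segment'' codes a $\Sigma^0_{2n+1}$ question; the natural candidates are orders of type $lin(T^+)$ versus $lin(T^-)$ where the sign at the root differs, so that one looks like an $\omega$-sum and the other like an $\omega^*$-sum (or $\zeta$-sum) of the same lower-rank pieces — these are bi-embeddable since the order is indecomposable, but an embedding must shift things arbitrarily far, forcing it to search unboundedly.

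Concretely, I would set $\mathcal G = \sum_{i\in\omega} \mathcal C_i$ and $\mathcal B = \sum_{i\in\omega}\mathcal D_i$, where $\mathcal C_i,\mathcal D_i$ are bi-embeddable rank-$n+1$ indecomposable orders chosen from a uniformly computable family so that $\mathcal C_i \hookrightarrow \mathcal D_i$ via a ``short'' embedding iff some externally chosen $\Sigma^0_{2n+1}$ fact $P(i)$ holds, and otherwise the only embeddings of $\mathcal C_i$ into $\mathcal B$ must use parts of $\mathcal D_j$ for many $j>i$. The effective ingredient is an iterated-jump coding: by the standard machinery (Ash's construction for linear orders, reflected in \cref{uniformoperators} and the Ash--Knight apparatus invoked later for Boolean algebras), one can build a uniformly computable sequence of pairs of rank-$n+1$ structures $(\mathcal C_i,\mathcal D_i)$ such that deciding which of a finite list of embedding patterns holds requires exactly the $(2n+1)$-st jump — this is where the $\omega$ versus $\omega^*$ asymmetry at level $n$ of the condensation tower contributes the two extra jumps beyond the rank-$n$ bound in \cref{uniformoperators}. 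Then, given any $\Delta^0_{2n+1}$ function $h$, a finite-injury / diagonalization argument: $h$ purporting to be an embedding $\mathcal G\to\mathcal B$ must, on input an element of $\mathcal C_i$, output an element of some $\mathcal D_{j}$; reading off $j$ from $h$ is $\Delta^0_{2n+1}$, and by a suitable requirement we arrange that the true value of $j$ (equivalently, whether $P(i)$ holds) is not $\Delta^0_{2n+1}$-uniform in $i$, contradiction.

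The main obstacle I expect is getting the coding to land at level exactly $2n+1$ rather than $2n$ or $2n+2$: I need to bundle the rank-$n$ analysis (which by \cref{uniformoperators} is already decidable in $2n$ jumps) together with one genuinely $\Sigma^0_{2n+1}$-hard binary choice at the top level, and verify that an embedding really is forced to resolve that choice — i.e. that the two h-indecomposable orders I pick are bi-embeddable (so $\mathcal G\approx\mathcal B\approx\mathcal L$, which also requires checking the global sum still has Hausdorff rank $n+1$ and is bi-embeddable with $\mathcal L$, using indecomposability and absorption of the lower-rank summands of $\mathcal L$) yet have no ``local'' embedding, so that the witness $j$ genuinely encodes $P(i)$. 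The cleanest way to handle the bi-embeddability-with-$\mathcal L$ bookkeeping is to note that $\mathcal L$ itself, being rank $n+1$ and scattered, has a block decomposition whose top-level condensation $\mathcal L^{(n)}_F$ is an infinite scattered order, and to insert the coding pairs into the infinitely many rank-$n+1$ indecomposable summands while leaving everything else fixed; bi-embeddability is then preserved summand-by-summand since each replaced summand is bi-embeddable with the original by construction, and the lower-rank parts are unchanged.
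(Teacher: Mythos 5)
There are genuine gaps here, both in the global structure and in the key coding step. First, the shape of your construction is wrong: by Jullien's theorem (quoted in the paper right after \cref{lem:lowerboundindec}), a countable scattered order decomposes into \emph{finitely} many indecomposables, and your claim that $\mathcal L$ must contain infinitely many indecomposable summands of rank exactly $n+1$ is false ($\omega^{n+1}$ has exactly one). Worse, if you set $\mathcal G=\sum_{i\in\omega}\mathcal C_i$ with every $\mathcal C_i$ indecomposable of Hausdorff rank $n+1$, then the $(n+1)$-st condensation of $\mathcal G$ is infinite, so $\mathcal G$ has rank at least $n+2$ and cannot be bi-embeddable with $\mathcal L$ (embeddings do not increase Hausdorff rank). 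So the coding locations cannot be separate top-level rank-$(n+1)$ summands. Second, your proposed building blocks $lin(T^+)$ versus $lin(T^-)$ with the root sign flipped are \emph{not} bi-embeddable in general (already $\omega\not\approx\omega^*$); indecomposability of each does not make them equimorphic. Third, the heart of your argument~--- a uniformly computable family of bi-embeddable rank-$(n+1)$ pairs for which ``which embedding pattern holds'' is $\Sigma^0_{2n+1}$-complete~--- is exactly the hard point, and you flag it as an obstacle without resolving it. It does not follow from ``the standard machinery,'' and there is a conceptual problem with the final step: an embedding is never forced to compute a fixed $\Sigma^0_{2n+1}$ fact, it only has to produce \emph{some} witness, which may be non-canonical; arguments of this kind usually yield only that an embedding \emph{dominates} a function (as in the Boolean algebra section of the paper), not that it decides a predicate.

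The paper avoids all three problems by working inside a \emph{single} indecomposable summand of rank $n+1$ and by diagonalizing rather than coding. The $n$-th condensation of that summand is a labelled copy of $\omega$; one builds a $\Delta^0_{2n+1}$ copy $\mathcal B^n$ of it in which designated finite intervals $[b_{e,1},b_{e,2}]$ are stretched, once $\phi_e(b_{e,2})$ converges, to be longer than the corresponding initial segment of $\mathcal G^n$, so that the $e$-th partial $\Delta^0_{2n+1}$ function cannot be an embedding. Then $n$ rounds of jump inversion (the fact that $\omega\cdot\mathcal L$ and $\omega^*\cdot\mathcal L$ are computable from a $\Delta^0_3$ presentation of $\mathcal L$, applied along the signs of a branch of the signed tree) turn $\mathcal B^n$ into a computable order $\mathcal B$; the stretched finite intervals become extra copies of a lower-rank h-indecomposable block $lin(P(\sigma))$, which are absorbed under bi-embeddability, so the rank stays $n+1$. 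Any $\Delta^0_{2n+1}$ embedding of $\mathcal B$ into $\mathcal G$ would compose with the computable labelling and the jump-inversion embeddings to give a $\Delta^0_{2n+1}$ embedding $\mathcal B^n\hookrightarrow\mathcal G^n$, which was diagonalized against; minimality of Jullien's decomposition then forces any global embedding to respect the distinguished summand. If you want to salvage a coding approach in the style of \cref{subsect:pairs}, you would still need to repair the rank and bi-embeddability bookkeeping above and replace exact computation by a domination argument.
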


\begin{lemma}\label{lem:lowerboundindec}
  Let $\mc L$ be a computable scattered indecomposable linear order of finite Hausdorff rank $n+1$. Then there are computable linear orders $\mathcal G$ and $\mc B$ bi-embeddable with $\mc L$ such that there is no $\Delta^0_{2n+1}$ embedding of $\mc G$ into $\mc B$.
\end{lemma}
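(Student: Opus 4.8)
The plan is to reduce to the h-indecomposable case and then build $\mathcal G$ and $\mathcal B$ by a pair-of-structures construction diagonalizing against all $\Delta^0_{2n+1}$ functionals. By Montalb\'an's theorem quoted in the preliminaries, $\mc L$ is bi-embeddable with $lin(T)$ for some signed tree $T$ whose rank equals the Hausdorff rank $n+1$ of $\mc L$; since bi-embeddability is transitive, it suffices to prove the lemma for $\mathcal L = lin(T)$. The key point I would exploit is that the linear order associated with a signed tree is built by summing, over $\omega$ or $\omega^*$, blocks of the form $\sum_{j\le i} lin(T_{\langle j\rangle})$, and each subtree $T_{\langle j\rangle}$ has rank at most $n$. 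So the ``length'' of the initial segments in $\mathcal L$ is governed by an $\omega$- or $\omega^*$-indexed parameter, and we have freedom, in a computable copy, to decide at which stage a given finite approximation gets ``extended'' — this is exactly the kind of $\Sigma^0_2$/$\Pi^0_2$ freedom that drives Ash's lower bound for isomorphisms of ordinals, here pushed up by $2n$ because computing the $n$-th condensation costs $2n$ jumps.

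The construction proceeds by recursion on the rank of $T$. For the base case, rank $1$ (so $n=0$ and we must beat $\Delta^0_1$, i.e. computable embeddings): $lin(T)$ is $\omega$ or $\omega^*$, or a finite sum of such, and one takes $\mathcal G$, $\mathcal B$ to be two computable copies of, say, $\omega+\omega^*$ (or the relevant finite sum) arranged so that no computable function embeds one into the other — a standard finite-injury diagonalization using that the ``$\omega$-part'' of $\mathcal B$ can be made to look arbitrarily long before the $\omega^*$-part appears. For the inductive step, suppose the lemma holds for all signed trees of rank $\le n$; write $\mathcal L = lin(T)$ with $s_T(\emptyset)=+$ (the $-$ case is symmetric), so $\mathcal L \cong \sum_{i\in\omega}\sum_{j\le i} lin(T_{\langle j\rangle})$. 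Each $lin(T_{\langle j\rangle})$ has rank $\le n$, and I would apply the induction hypothesis to obtain, for the relevant subtrees, pairs of computable bi-embeddable copies $\mathcal G_{\langle j\rangle} \approx \mathcal B_{\langle j\rangle}$ admitting no $\Delta^0_{2n-1}$ embedding. Then I would assemble $\mathcal G$ and $\mathcal B$ by summing these pieces along $\omega$, but delaying the appearance of each successive block (using a movable-marker / back-and-forth on a $\Sigma^0_2$ condition, as in Ash's pairs-of-structures machinery) so that any $\Delta^0_{2n+1}$ embedding $\mathcal G\hookrightarrow\mathcal B$ would have to either (i) restrict to a $\Delta^0_{2n-1}$ embedding of some $\mathcal G_{\langle j\rangle}$ into some $\mathcal B_{\langle j'\rangle}$ of matching rank, contradicting the induction hypothesis, or (ii) resolve a $\Sigma^0_2$-question (which block-length wins) that we make non-$\Delta^0_{2}$-decidable relative to the oracle by a direct diagonalization against the $2n+1$-st jump. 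The two sources of hardness — one jump each for the outermost $\omega$-vs-length game plus the $2n$ jumps inherited from computing the condensation on the subpieces — combine to exactly $2n+1$.

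The main obstacle I anticipate is bookkeeping the interaction between the two diagonalization layers: the inductively supplied pairs $\mathcal G_{\langle j\rangle}, \mathcal B_{\langle j\rangle}$ come with their own hardness guarantees stated for \emph{embeddings between them}, but in $\mathcal G\hookrightarrow\mathcal B$ an embedding need not respect the block decomposition on the nose — a block of $\mathcal G$ could map across a boundary of $\mathcal B$. I would handle this by invoking indecomposability and the structure of $lin(T)$: because each $lin(T_{\langle j\rangle})$ is (bi-embeddable with) an indecomposable order, any embedding of such a block into a finite sum of blocks must already embed it into one of the summands (this is essentially the defining property of indecomposability, and it is preserved under bi-embeddability as noted in the excerpt). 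One then checks that the rank of the target summand must be $\ge$ the rank of the source block, and by carefully choosing which ranks appear where (padding with low-rank pieces so the ``first place a rank-$n$ block can land'' is forced), the embedding is pinned down enough that it must induce a too-simple embedding on a matching inductive pair. Making the delays computable and verifying the $\Delta^0_{2n+1}$ (rather than $\Delta^0_{2n}$) threshold is exactly where Ash's $\alpha$-systems / the pairs-of-structures theorem does the real work, and I would cite that machinery rather than rebuild it. Finally, the reduction of \cref{thm:lowerboundlo} to this lemma is routine: an arbitrary scattered $\mc L$ of Hausdorff rank $n+1$ has an indecomposable summand (or condensation piece) of rank $n+1$, and one glues the hard pair for that piece into copies of $\mc L$.
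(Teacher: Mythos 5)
Your reduction to the h-indecomposable case and your base case are reasonable, but the inductive step has a genuine gap: the two ``sources of hardness'' do not combine in the way you assert. Your induction hypothesis supplies blocks $\mathcal{G}_{\langle j\rangle}\approx\mathcal{B}_{\langle j\rangle}$ with no $\Delta^0_{2n-1}$ embedding between them, and you then claim that a $\Delta^0_{2n+1}$ embedding of the assembled $\mathcal{G}$ into $\mathcal{B}$ would ``restrict to a $\Delta^0_{2n-1}$ embedding'' of some block into some block. But the restriction of a $\Delta^0_{2n+1}$ map to a block is still only $\Delta^0_{2n+1}$, so the induction hypothesis is never contradicted; hardness against $\Delta^0_{2n-1}$ at the sublevel plus two extra quantifier alternations at the outer level does not add up to hardness against $\Delta^0_{2n+1}$ unless you exhibit a mechanism that \emph{lowers} the complexity of the restricted map, and no such mechanism appears in your sketch. (A secondary instance of the same problem: indecomposability only guarantees the \emph{existence} of an embedding of a block into one summand, obtained by composing with a partition of the preimage, and that composition again has uncontrolled complexity.) Deferring the quantitative content to ``Ash's $\alpha$-systems / pairs of structures'' does not close the gap either: that machinery builds a computable structure isomorphic to one of a pair depending on a $\Sigma^0_\alpha$ outcome, which is not what is needed here, and indeed the paper uses $\alpha$-systems only in the Boolean algebra section, not for this lemma.

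The paper's proof runs in the opposite direction, and that is exactly what supplies the missing mechanism. It diagonalizes \emph{once, at the top level}, against all partial $\Delta^0_{2n+1}$ functions, producing a $\Delta^0_{2n+1}$-presented copy $\mathcal{B}^n\cong\omega$ of the top condensation, with long finite marker intervals $[b_{e,1},b_{e,2}]$, that admits no $\Delta^0_{2n+1}$ embedding into a labelled standard copy $\mathcal{G}^n$. It then applies jump inversion (the fact that $\omega\cdot\mathcal{L}$ and $\omega^*\cdot\mathcal{L}$ are computable when $\mathcal{L}$ is $\Delta^0_3$) $n$ times along a fixed branch $\sigma$ of the signed tree, each round lowering the presentation complexity by two jumps and yielding embeddings $\phi_i\colon\mathcal{B}^{i+1}\to\mathcal{B}^{i}$, until it reaches a computable $\mathcal{B}=\mathcal{B}^0$; meanwhile $\mathcal{G}$ is padded with copies of $lin(P(\sigma))$ so that a computable labelling $\psi\colon\mathcal{G}\to\mathcal{G}^n$ exists. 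A hypothetical $\Delta^0_{2n+1}$ embedding $\chi$ between $\mathcal{B}$ and $\mathcal{G}$ is then transported upward, via the composition of the $\phi_i$, $\chi$, and $\psi$, to a $\Delta^0_{2n+1}$ embedding of $\mathcal{B}^n$ into $\mathcal{G}^n$, contradicting the diagonalization. If you wish to keep an inductive formulation, you would at minimum need a uniformly relativized induction hypothesis together with a jump-inversion step; as written, your argument only rules out block-respecting embeddings of complexity $\Delta^0_{2n-1}$, which is strictly weaker than the claim.
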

\begin{proof}
  As $\mc L$ is indecomposable of rank $n+1$, there is an h-indecomposable linear order together with its signed tree $T$ of rank $n+1$. Given $T$ of rank $n+1$, let $\sigma\in T$ of length $n$ and let $P(\sigma)$ be the tree $\{ \tau: \tau \preceq \sigma\}$ with sign function inherited from $T$. Assume that $s_T(\emptyset)=+$, the case where $s_T(\emptyset)=-$ is analogous. Our ordering $\mathcal G$ is of order type $\sum_{i\in \omega}\left(\sum_{j\leq i} lin(T_{\langle j\rangle})+lin(P(\sigma))\right)$.
  Let $\mathcal G^n$ be a standard copy of $\omega$ with the elements labelled by the nodes of the tree of height $1$, i.e., we can write $\mathcal G^n$ as
  \[ \mathcal G^n=t_0+g_0+t_{0,1}+t_{1,1}+g_1+t_{0,2}+t_{1,2}+t_{2,2}+g_2+\dots.\]
  Clearly we can take $\mc G$ such that there is a computable function $\psi:\mc G\rightarrow \mc G^n$ taking $x$ in the $i^{th}$ copy of $lin(T_{\langle j\rangle})$ in $\mc G$ to $t_{j,i}$ and $x$ in the $i^{th}$ copy of $lin(P(\sigma))$ in $\mc G$ to $g_i$.
  % We let $\mathcal G$ be a copy of that ordering with computable successor and $i$-block relation for $i\leq n$. It is easy to see that, since $T$ has\emph{It seems we need T to be finite to obtain that and it needs proof that we can get a finite tree T for every indecomposable l.o of finite rank such that $lin(T)\approx L$} finite rank we can get an ordering $\mathcal G$ with the aformentioned properties.
  %
  %Given $\mc G$, we let $\mc G^i$ be the $i^{th}$ condensation of $G$, i.e., $\mc G^i=\mc G/F_i$. Notice that $\mc G^n$ is isomorphic to $\omega$ and we  can obtain a list of elements $(g_i)_{i\in\omega}$, each $g_i$ representing the $i^{th}$ copy of $P(\sigma)$.
  %
  % Note that as $\mc G$ is h-indecomposable in $\mc G^n$ every element represents an equivalence class isomorphic to $T_{\langle i\rangle}$ for some $\langle i\rangle \in T$, i.e., assuming that $\langle i\rangle\in T$ for all $i\in \omega$ ,
  % \[\begin{array}{ccccccccccccccc}
  %   \mc G^i=&g_0 &+ &g_1 &+ &g_2&+ &g_3 &+ &g_4 &+ &g_5 &+ &g_6&\dots\\
  %   &T_{\langle 0\rangle} & &T_{\langle 0\rangle}& &T_{\langle 1\rangle}& &T_{\langle 0\rangle}& &T_{\langle 1\rangle} & &T_{\langle 2\rangle}& &T_{\langle 0\rangle}&\dots
  % \end{array}\]

  We now build $\mc B$ such that no embedding $\mc B \hookrightarrow \mc G$ has degree $\Delta^0_{2n+1}$. We first build a $\Delta^0_{2n+1}$ computable linear order $\mc B^n$ such that $\mc B^n\cong \mc \mc G^n$ but that there is no $\Delta^0_{2n+1}$ embedding of $\mc B^n$ into $\mc G^n$. Towards that fix a listing $(\phi_e)_{e\in\omega}$ of all partial $\Delta^0_{2n+1}$ computable functions. We construct $\mc B^n$ in stages. At stage $0$, $\mc B^n$ is $\mc G^n$
	%\todo{{\color{blue}What is $\mc G^n$? Is it $\mc N_\mc G$?}}
  with the difference that every element $g_i$ is replaced by successive elements $b_{i,1}, b_{i,2}$, i.e., we can write $\mc B^n$ as
  \[ \mc B^n= t_0+b_{0,1}+b_{0,2}+t_{0,1}+t_{1,1}+b_{1,1}+b_{1,2}+t_{0,2}+\dots.\]
  We want to satisfy the requirements
  \[R_e:\quad  \phi_e: \mc B^n\not \hookrightarrow \mc G^n.\]

  At stage $s$, if for $e<s$, $s$ is the first stage greater than $e$ such that $\phi_{e,s}(b_{e,2})\!\downarrow\, =x$ for some $x$ and $g_k$ is least such that $x\geq g_k$, then put elements into $(b_{e,1},b_{e,2})$ such that $|(b_{e,1},b_{e,2})|> |[t_0,x]|$. This ensures that $\phi_e$ can not be an embedding. This finishes the construction.
  % In case that $e<s$ and $s$ is the first stage greater or equal to $e$ such that $\phi_{e,s}(b_{e,2})\downarrow=g_k$ for some $k\in \omega$, put $[0,k]$ elements in the interval $(b_{e,1},b_{e,2})$ at stage $s$. This ensures that $R_e$ can not be an embedding as the interval $(0,b_{e,2})$ is larger than the interval $(0, \phi_e(b_{e,2}))$.

  Note that in $\mc B^n$ %\todo{{\color{blue}$\mathcal{B}^n$ vs. $\mathcal{B}_n$}}
  all the intervals $[b_{e,1},b_{e,2}]$ are finite with uniformly computable first and last element. Notice furthermore that everything except those intervals in the ordering $\mc B^n$ is computable. Thus if we replaced those intervals with computable intervals we would get a computable ordering.

  We now construct for every $i<n$ a $\Delta^0_{2i+1}$ computable ordering $\mc B^i$ such that $\mc B^0=\mc B\approx \mc G$ and that there are $\Delta^0_{2i+1}$ computable embeddings from $\mc B^i$ into $\mc B^{i-1}$.

  We use a well known result that says if a linear order $\mc L$ is $\Delta^0_3$, then $\omega\cdot \mc L$ and $\omega^*\cdot \mc L$ are computable, see~\cite[Theorem 9.11]{AK00}. This theorem relativizes and the proof is constructive in the sense that it constructs a computable copy of $\{\omega,\omega^*\}\cdot \mc L$, given $\Delta^0_3$ order $\mc L$. Its only caveat is that it is nonuniform. If $\mc L$ has a least element, this element has to be fixed non-uniformly. However, this poses no problem to our construction, as we wish to jump-invert the intervals $[b_{i,1},b_{i,2}]$ and have uniformly computable least and greatest elements for those.

  So we take our $\Delta^0_{2n+1}$ computable linear order $\mc B^n$ and replace every interval $[b_{i,1},b_{i,2}]$ by a copy of $\omega\cdot[b_{i,1},b_{i,2}]$ if $\sigma(0)=+$ and by a copy of $\omega^*\cdot [b_{i,1},b_{i,2}]$ otherwise. Elements labelled $t_{i,j}$ are replaced by computable disjoint copies of $T_{\langle i\rangle}$. We obtain a linear order $\mc B^{n-1}$ with suborderings $\mc B^{n-1}_i$ corresponding to the jump inverted intervals $[b_{i,1},b_{i,2}]$. If $\mc B^{n-1}_i$ has a least or greatest element, then the procedure will also return us these elements. Notice that if for example $[b_{i,1}, b_{i,1}]\cong \mathbf n$ and $\sigma(0)=+$, then $\mc B^{n-1}_i\cong \omega\cdot \mathbf n$. Furthermore, $\mc B^{n-1}$ is a computable sum of uniformly computable and uniformly %computable
   $\Delta^0_{2n-1}$ linear orders and thus itself $\Delta^0_{2n-1}$. We repeat this procedure, replacing $\mc B^j_i$ with $\omega\cdot \mc B^j_i$ or $\omega^{*}\cdot \mc B^j_i$ depending on whether $\sigma(j)=+$ or $\sigma(j)=-$. The resulting linear order $\mc B^j$ is $\Delta^0_{2n+1-2(n-j)}$ computable. Thus by induction we end up with a linear order $\mc B^0=\mc B$ which is computable, and it is easy to see $\mc B\cong \sum_{i\in \omega}\left(\sum_{j\leq i} lin(T_{\langle j\rangle})+lin(P(\sigma))\cdot k_i\right)$ where $k_i=|[b_{i,1},b_{i,2}]|$. Clearly $\mc B \approx\mc G$.

  Our construction of $\mc B^i$ from $\mc B^{i+1}$ also provides us with $\Delta^{0}_{i}$ computable embeddings $\phi_{i}: \mc B^{i+1}\rightarrow \mc B^{i}$. Assume that there is a $\Delta^0_{2n+1}$ embedding $\chi$ of $\mc B$ into $\mc G$, then as can be seen from \cref{fig:compositions} the composition of the embeddings $\phi_{i}$ and $\psi$ gives a $\Delta^{0}_{2n+1}$ embedding of $\mc B^n$ into $\mc G^n$, a contradiction.

  \begin{figure}[h]
    \begin{center}
\begin{tikzcd}
% row 1
\omega\cong\mathcal{B}^n  \arrow{rr}{\varphi_{n-1}} && \mathcal{B}^{n-1}  \arrow[dotted]{rr} && \mathcal{B}^{1}  \arrow{rr}{\varphi_{0}}&& \mathcal{B} \arrow{d}{\chi}\\
% row 2
\omega\cong \mathcal{G}^n&&   &&  &&  \mathcal{G} \arrow{llllll}{\psi}
\end{tikzcd}
\caption{\label{fig:compositions}Morphisms between $\mc B$ and $\mc G$}
\end{center}
\end{figure}
\end{proof}

Jullien~\cite{jullien1969} showed that every countable scattered linear order can be decomposed into finitely many indecomposable linear orders and that there exists a minimal decomposition which is unique up to bi-embeddability, see also~\cite{montalban2006}.

Notice that if $\mc L$ has Hausdorff rank $\alpha$ and a minimal decomposition into indecomposable linear orders $\mc L_1+\dots+\mc L_n$, then at least one of the $\mc L_i$ must have rank $\alpha$. Exploiting this properties, we can prove \cref{thm:lowerboundlo}.

\begin{proof}[Proof of \cref{thm:lowerboundlo}]
Let $\mc L$ have Hausdorff rank $\alpha$ and $\mc L_1+\dots +\mc L_n$ be a minimal decomposition, where $\mc L_i$ is of rank $\alpha$. We take $\mc G$ to be the linear order $lin(T_0)+\dots+\mc G_i+\dots+lin(T_n)$ where $lin(T_k)$ is a computable h-indecomposable linear order bi-embeddable with $\mc L_k$ and $\mc G_i$ is the linear order $\mc G$ from \cref{lem:lowerboundindec} for $\mc L_i$.
From now on we will write $\mc G$ as the sum $\mc G_1+\mc G_i+\mc G_2$.

For $\mc B$ we use the linear order $\mc G_1+\mc B_i+\mc G_2$ where $\mc G_1$ and $\mc G_2$ are as for $\mc G$ and $\mc B_i$ is the linear order $\mc B$ from \cref{lem:lowerboundindec} for $\mc L_i$. We claim that no embedding of $\mc B$ into $\mc G$ can be $\Delta^0_{2n+1}$.

To prove this we first construct a computable labelling of $\mc G$ by taking our labelling $\psi$ from \cref{lem:lowerboundindec} and extending it to a labelling of $\mc G$ by labelling all elements in $\mc G_1$ and $\mc G_2$ with two new labels $h_1,h_2$. We also can canonically extend the embeddings $\phi^i$ constructed in the proof of \cref{lem:lowerboundindec} to embeddings of $\mc G_1+\mc B^{i+1}+\mc G_2$ into $\mc G_1+\mc B^{i}+\mc G_2$.

Let $\chi$ be an embedding of $\B$ into $\mc G$ and let $b_{i,2}^\phi$ be images of $b_{i,2}$ in $\mc B^n$ under $\phi^n\circ\dots \circ \phi^1$. Then, it is not hard to see, that if $\chi$ sends infinitely many elements $b_{i,2}^\phi$ to $\mc G_1$, then $\mc G_1+\mc B_i$ is bi-embeddable with $\mc G_1$, contradicting the minimality of the decomposition. On the other hand if there is one $b_{i,2}^\phi$ that goes to $\mc G_3$, then cofinitely many elements $b_{i,2}^\phi$ must go to $\mc G_3$. As $\mc B_i$ is bi-embeddable with any of its end segments, this implies that $\mc B_i+\mc G_3$ is bi-embeddable with $\mc G_3$~--- again contradicting the minimality of the decomposition.

Now, if $\chi$ was $\Delta^0_{2n+1}$, then $\phi^n\circ\dots\circ \phi^1\circ \chi \circ \psi$ would embed an end segment of $\mc B^n$ into $\mathcal N_\mathcal G$ and we can get a $\Delta^0_{2n+1}$ embedding by shifting the embedding to the right by a finite number $m$.
% The prove of this claim is by showing that every embedding $\phi$ of $\mc B$ into $\mc G$ must send cofinitely many elements of $\mc B_i$ to $\mc G_i$. It follows that $\phi\restrict \mc B_i$ which is of the same complexity as $\phi$
% We first consider the case where $\mc G_1$ and $\mc G_2$ themselves are indecomposable or empty. The only interesting cases are where $\mc G_1$ and $\mc G_2$ are left or right indecomposable and not isomorphic to $\mathbf 1$, as all other cases are
\end{proof}

\section{Boolean algebras}\label{sec:ba}

The reader is referred to, e.g., monographs~\cite{Gon-97,Koppelberg} for the background on countable Boolean algebras. We treat Boolean algebras as structures in the language $\{ \cup^2, \cap^2, \overline{(\ )}^1; 0,1\}$. Any Boolean algebra $\mathcal{B}$ admits a natural partial ordering: $x\leq_{\mathcal{B}} y$ iff $x\cup y = y$. We always assume that $0^{\mathcal{B}} \neq 1^{\mathcal{B}}$.

%{\color{blue} [New preliminary information is starting here:]}

Suppose that $\mathcal{L}$ is a linear order with least element. Then the corresponding \emph{interval Boolean algebra} $Int(\mathcal{L})$ is defined as follows:
\begin{itemize}
	\item the domain of $Int(\mathcal{L})$ is the smallest set containing all finite unions of $\mathcal{L}$-intervals:
	\[
		[a_0,b_0) \cup [a_1,b_1) \cup \dots \cup [a_n,b_n) \text{ or } [a_0,b_0) \cup [a_1,b_1) \cup \dots  \cup [a_n,\infty)
	\]
	where $a_0 <_{\mathcal{L}} b_0 <_{\mathcal{L}} a_1 <_{\mathcal{L}} b_1 <_{\mathcal{L}} \dots <_{\mathcal{L}} a_n <_{\mathcal{L}} b_n$;

	\item the functions of $Int(\mathcal{L})$ are the usual set-theoretic operations.
\end{itemize}
For more details, see, e.g., Section~15 of~\cite{Koppelberg}.

Let $\mathcal{B}$ be a Boolean algebra. An element $a\in\mathcal{B}$ is an \emph{atom} if $a$ is a minimal non-zero element in $\mathcal{B}$. The algebra $\mathcal{B}$ is \emph{atomic} if for every non-zero $b\in\mathcal{B}$, there is an atom $a$ such that $a\leq_{\mathcal{B}} b$. The algebra $\mathcal{B}$ is \emph{superatomic} if all subalgebras of $\mathcal{B}$ are atomic.

The following fact is well-known: A countable Boolean algebra $\mathcal{B}$ is su\-per\-a\-to\-mic if and only if there are a countable ordinal $\alpha$ and a non-zero natural number $m$ such that $\mathcal{B}\cong Int(\omega^{\alpha}\cdot m)$ (see Theorem~1 of~\cite{Gon-73} or p.~277 of~\cite{Koppelberg}).

Furthermore, this superatomicity criterion admits a natural ``effectivization'': Goncharov (Theorem~2 of~\cite{Gon-73}) proved that a superatomic Boolean algebra $\mathcal{B}$ has a computable copy if and only if $\mathcal{B}\cong Int(\omega^{\alpha}\cdot m)$, where $\alpha$ is a computable ordinal and $0<m<\omega$.

%{\color{blue} [New info ends here.]}

The main result of this section is the following

\begin{theorem} \label{theo:BA}
	Let $\alpha$ be a non-zero computable ordinal, and let $m$ be a non-zero natural number. The superatomic Boolean algebra $Int(\omega^{\alpha} \cdot m)$ has degree of b.e.~categoricity equal to
	\[
		\mathbf{0}^{\langle 2\alpha\rangle} = \begin{cases}
 			\mathbf{0}^{(2\alpha-1)}, & \text{if } \alpha < \omega,\\
 			\mathbf{0}^{(2\alpha)}, & \text{if } \alpha \geq \omega.
		\end{cases}
	\]
\end{theorem}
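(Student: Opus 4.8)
The plan is to prove the theorem in two halves: an upper bound showing $\mathbf{0}^{\langle 2\alpha\rangle}$ computes embeddings between any two computable copies of $Int(\omega^\alpha\cdot m)$, and a lower bound showing no degree strictly below $\mathbf{0}^{\langle 2\alpha\rangle}$ suffices. For the upper bound I would first reduce to the case $m=1$: given computable copies $\mathcal A\approx\mathcal B$ of $Int(\omega^\alpha\cdot m)$, one can use an oracle of the appropriate complexity to locate a partition of each into $m$ ideals each isomorphic to $Int(\omega^\alpha)$ and handle them separately. For $Int(\omega^\alpha)$ itself, the key structural fact is the classical characterization of superatomic Boolean algebras by their Cantor–Bendixson (Frechét) rank and invariant: $Int(\omega^\alpha)$ has CB-rank $\alpha$, and I would want the observation that for superatomic algebras bi-embeddability coincides with isomorphism up to a bounded discrepancy in the bottom CB-derivative, so an embedding can be built by a back-and-forth-style recursion along the CB-hierarchy. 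The point is to check, at each element, which CB-rank it has and, at rank-$\beta$ stages, count the finitely many atoms appearing in the $\beta$-th derivative; deciding the CB-rank of an element is $\Pi^0_{2\beta}$-ish and counting atoms in the derivative adds another jump, so climbing all the way to rank $\alpha$ costs $2\alpha-1$ jumps for finite $\alpha$ and $2\alpha$ jumps at and above $\omega$ (the extra jump at limit stages being needed to assemble the information from all smaller ranks uniformly). This is exactly the content of the $\mathbf 0^{\langle 2\alpha\rangle}$ notation, which is why that formula appears.

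For the lower bound I would invoke the new variation of the Ash–Knight pairs-of-structures theorem that the paper announces it will prove in this section (the paragraph after the statement says precisely this). The strategy is: build a uniformly computable sequence of pairs of computable copies of $Int(\omega^\alpha\cdot m)$ such that, along the sequence, the "true" copies are bi-embeddable with each other while the constructed diagonalizing copies force any embedding to have complexity at least $\mathbf 0^{\langle 2\alpha\rangle}$. Concretely, one wants a $\Sigma$-system / $\alpha$-system (in the sense of the $\alpha$-systems machinery promised in the introduction) whose runs correspond to attempts to build an embedding at a degree below $\mathbf 0^{\langle 2\alpha\rangle}$, and then use the pairs theorem to conclude that no such degree can uniformly (hence, by a further argument, not even nonuniformly) compute embeddings between all computable bi-embeddable copies. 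The bi-embeddability constraint — as opposed to isomorphism — means the diagonalization has more freedom (one may add or delete finitely many atoms in low CB-derivatives without leaving the bi-embeddability class), and this is what must be reflected in the combinatorics of the system so that the obstruction still lands at $2\alpha-1$/$2\alpha$ rather than one level lower.

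I expect the main obstacle to be the lower bound, specifically setting up the correct variant of the pairs-of-structures theorem and the corresponding $\alpha$-system. In the usual Ash–Knight setup the two structures are fixed and one diagonalizes against isomorphisms; here one needs a version adapted to bi-embeddability and to a family of structures, and the bookkeeping for which elements may be "wasted" (mapped non-surjectively, or whose CB-rank is allowed to drop) must be arranged so that the adversary building a low-complexity embedding is genuinely defeated. A secondary difficulty on the upper-bound side is making the recursion along the CB-hierarchy uniform through limit ordinals with exactly the claimed number of jumps — in particular verifying that the jump at a limit stage $\lambda$ is the $(2\lambda)$-th and not the $(2\lambda+1)$-th, which is the reason the two cases in the displayed formula split at $\omega$. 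The routine parts — the reduction from $m$ to $1$, the explicit definition of the diagonalizing copies, and the verification that everything stays computable — I would relegate to short lemmas.
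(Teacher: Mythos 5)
Your upper bound is essentially sound and close in spirit to what is needed: the paper simply quotes Ash's theorem that $Int(\omega^{\alpha}\cdot m)$ is relatively $\Delta^0_{2\alpha}$ categorical, together with the observation that any Boolean algebra bi-embeddable with $Int(\omega^{\alpha}\cdot m)$ is automatically superatomic (subalgebras of superatomic algebras are superatomic) and hence isomorphic to it by Cantor--Bendixson analysis; your back-and-forth along the CB hierarchy would reprove this, and the reduction from $m$ to $1$ is indeed routine.

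The lower bound, however, has a genuine gap, and it is exactly the point the paper flags as requiring a new technique. First, a diagonalization ``against attempts to build an embedding at a degree below $\mathbf{0}^{\langle 2\alpha\rangle}$'' cannot be run: there is no enumeration of all functions of degree strictly below (or incomparable to) $\mathbf{0}^{\langle 2\alpha\rangle}$. Level-by-level diagonalization against all $\Delta^0_{\beta}$ functions for each $\beta<2\alpha$ would only show failure of $\Delta^0_{\beta}$ b.e.~categoricity at each level, which does not establish the existence of a \emph{least} sufficient degree; a degree of categoricity requires a single pair of computable copies $\mathcal{A},\mathcal{C}$ such that \emph{every} embedding $h\colon\mathcal{A}\hookrightarrow\mathcal{C}$ computes $\mathbf{0}^{\langle 2\alpha\rangle}$, i.e.\ a coding argument rather than a diagonalization. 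Second, you have the direction of the bi-embeddability difficulty backwards: allowing embeddings instead of isomorphisms gives the map-builder \emph{more} freedom, which makes the coding harder, because an embedding carrying a subalgebra $Int(\omega^{\beta}\cdot(1+f(n)))$ into the target only certifies a \emph{lower bound} on the multiplicity of the image, so from $h$ one can only extract a function \emph{dominating} the coded function $f$, not $f$ itself. The paper's key new ingredient, which your proposal is missing, is \cref{prop:good-lm}: a $\mathbf{0}^{\langle 2\beta+1\rangle}$-limitwise monotonic function $f$ such that any total function dominating $f$ already computes $\mathbf{0}^{\langle 2\alpha\rangle}$ (built from the domination lemma $A'\leq_T g\oplus A$). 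The pairs-of-structures variant (\cref{theo:Bazh-17} and its generalization \cref{theo:uniform}) is then used not to diagonalize but to realize $f$ by a uniformly computable sequence of linear orders $\mathcal{L}_n\cong\omega^{\beta}\cdot(1+f(n))$ (using $\omega^{\beta}\cdot(k+1)\leq_{2\beta+1}\omega^{\beta}\cdot k$ from \cref{lem:Ash-001}), whose interval algebras are summed to form $\mathcal{A}$, while $\mathcal{C}$ is a copy in which CB invariants are uniformly computable so that $h$ yields the dominating function. Without the domination-completeness of $f$, your construction would not defeat an adversary who wastes atoms in low CB-derivatives.
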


Theorem~\ref{theo:BA} and Theorem~2.1 of~\cite{Bazh-13} together imply that for a computable superatomic Boolean algebra, its degree of b.e.~categoricity is \emph{equal} to its degree of categoricity. Nevertheless, we note that the proofs of~\cite{Bazh-13} cannot be directly transferred to the bi-embeddability setting: the key tool of this section is a new technique which employs limitwise monotonic functions (see Subsection~\ref{subsect:lim-mon}).

As a consequence of Theorem~\ref{theo:BA}, we obtain a complete description of degrees of b.e.~categoricity for Boolean algebras:

\begin{corollary}\label{corollary:gen-BA}
	Let $\mathcal{B}$ be a computable Boolean algebra. Then $\mathcal{B}$ satisfies one of the following two conditions:
	\begin{itemize}
		\item[(a)] There is a computable ordinal $\alpha$ such that $\mathbf{0}^{(\alpha)}$ is the degree of b.e.~categoricity for $\mathcal{B}$.

		\item[(b)] $\mathcal{B}$ is not hyperarithmetically b.e.~categorical, and $\mathcal{B}$ does not have degree of b.e.~categoricity.
	\end{itemize}
\end{corollary}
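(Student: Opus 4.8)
The goal is to prove Corollary~\ref{corollary:gen-BA} from Theorem~\ref{theo:BA}. The plan is to use the structure theory of countable Boolean algebras together with the elementary invariants that distinguish superatomic from non-superatomic algebras, and then handle each case separately.

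First I would recall the dichotomy coming from the ideal of superatomic elements. For a computable Boolean algebra $\mathcal{B}$, either $\mathcal{B}$ is superatomic, or it is not. If $\mathcal{B}$ is superatomic and computable, then by Goncharov's theorem recalled in the excerpt, $\mathcal{B} \cong Int(\omega^{\alpha} \cdot m)$ for some computable ordinal $\alpha$ and $0 < m < \omega$, so Theorem~\ref{theo:BA} applies directly and gives that $\mathbf{0}^{\langle 2\alpha\rangle}$ — which is $\mathbf{0}^{(\beta)}$ for an appropriate computable ordinal $\beta$ (namely $\beta = 2\alpha-1$ when $\alpha<\omega$ and $\beta = 2\alpha$ when $\alpha\geq\omega$) — is the degree of b.e.~categoricity of $\mathcal{B}$. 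This puts $\mathcal{B}$ in case (a), with the one degenerate subtlety that when $\alpha = 0$ the algebra is the trivial two-element algebra; since the excerpt assumes $0^{\mathcal{B}}\neq 1^{\mathcal{B}}$ and $\alpha$ nonzero in Theorem~\ref{theo:BA}, I would note that the smallest relevant case is $\alpha=1$, $Int(\omega\cdot m)$, giving degree $\mathbf{0}^{(1)}=\mathbf{0}'$, and that any finite Boolean algebra trivially has degree of b.e.~categoricity $\mathbf{0}$.

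The second case is when $\mathcal{B}$ is computable but \emph{not} superatomic. Then $\mathcal{B}$ has a subalgebra which is not atomic, equivalently $\mathcal{B}$ has an element $a$ such that the relative algebra $\mathcal{B}\restriction a$ is atomless; in particular the atomless Boolean algebra $\mathcal{B}_\eta \cong Int(\eta)$ embeds into $\mathcal{B}$, and conversely $\mathcal{B}$ embeds into $Int(\eta)$ as well when $\mathcal{B}$ is countable and, say, has trivial superatomic part — but in general I only need one direction. The key point, which mirrors the remark in the introduction about $\eta$ and the atomless Boolean algebra not having a degree of bi-embeddable categoricity, is that for such a $\mathcal{B}$ one can exhibit, for \emph{every} hyperarithmetic degree $\mathbf{d}$, two computable copies $\mathcal{B}_1, \mathcal{B}_2$ bi-embeddable with $\mathcal{B}$ such that no $\mathbf{d}$-computable embedding of $\mathcal{B}_1$ into $\mathcal{B}_2$ exists. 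To do this I would use a Harrison-style construction: since $\mathcal{B}$ contains an atomless part, $\mathcal{B}$ is bi-embeddable with $Int(\mathcal{H})$ where $\mathcal{H}$ is built from the Harrison linear order (or more directly, bi-embeddable with a computable Boolean algebra $\mathcal{C}$ obtained by grafting an atomless piece onto $\mathcal{B}$'s superatomic skeleton such that $\mathcal{C}$ contains no hyperarithmetic sequence witnessing atomlessness in a uniform way). Any embedding of a "nice" computable copy $\mathcal{B}_\eta$ of the atomless algebra into $\mathcal{C}$ would have to pick out an infinite descending chain of nonzero elements in $\mathcal{C}$ realizing the atomless type, and by the usual pseudo-well-foundedness argument for Harrison-type structures, no such chain is hyperarithmetic. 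Hence no hyperarithmetic degree computes embeddings between all computable bi-embeddable copies, which gives both clauses of case (b): $\mathcal{B}$ is not hyperarithmetically b.e.~categorical and a fortiori has no degree of b.e.~categoricity (a degree of b.e.~categoricity, if it existed, would be a single Turing degree below $\mathcal{O}$ — actually below $\mathbf{0}^{(\omega_1^{CK})}$ — computing all these embeddings, which we have just ruled out).

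The main obstacle is the non-superatomic case: I must make precise the claim that "containing an atomless element forces Harrison-type behaviour." The cleanest route is to reduce to the known fact about the atomless Boolean algebra $\mathcal{B}_\eta$ itself not having a degree of b.e.~categoricity (referenced in the introduction), and then observe that this non-categoricity is inherited: if $\mathcal{B}$ is not superatomic then $\mathcal{B}$ is bi-embeddable with $\mathcal{B} \times \mathcal{B}_\eta$ — indeed, more is true, a non-superatomic countable Boolean algebra absorbs a copy of the atomless algebra as a relative algebra on some nonzero element — and one can take the troublesome pair of copies for $\mathcal{B}_\eta$, realize them inside computable copies of $\mathcal{B}$ on a fixed nonzero element with the rest of the algebra presented identically and computably, and transfer the obstruction. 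I would spell out the absorption lemma (every non-superatomic countable $\mathcal{B}$ satisfies $\mathcal{B} \approx \mathcal{B}\oplus Int(\eta)$, where $\oplus$ denotes the direct product, because $\mathcal{B}$ has a relative atomless part $\mathcal{B}\restriction a$ with $a\neq 0$, and $\mathcal{B}\restriction a \approx Int(\eta) \approx Int(\eta)\oplus Int(\eta)$, so we can split off a copy while keeping bi-embeddability), and then note the transfer of the Harrison obstruction is routine. Modulo this absorption lemma and the cited fact about $\mathcal{B}_\eta$, the corollary follows by the case split above.
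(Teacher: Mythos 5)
Your superatomic case is exactly the paper's (Goncharov's characterization plus Theorem~\ref{theo:BA}), but the non-superatomic case has genuine gaps. First, your opening reduction is based on a false equivalence: ``$\mathcal{B}$ is not superatomic, equivalently $\mathcal{B}$ has an element $a$ such that $\mathcal{B}\upharpoonright a$ is atomless.'' Non-superatomicity is equivalent to containing an atomless \emph{subalgebra}, not an atomless \emph{relative algebra}; there are atomic, non-superatomic countable Boolean algebras (e.g.\ $Int(\zeta\cdot\eta)$, in which every nonzero element bounds an atom), and for these your ``absorption lemma'' has no atomless $\mathcal{B}\upharpoonright a$ to split off. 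Second, and more seriously, the route you declare cleanest --- realize the troublesome pair $(\mathcal{A}_\eta,\mathcal{C}_\eta)$ for the atomless algebra below a designated element, keep the rest $\mathcal{D}$ fixed, and ``transfer the obstruction'' --- does not go through as stated: an embedding $h\colon \mathcal{A}_\eta\oplus\mathcal{D}\hookrightarrow \mathcal{C}_\eta\oplus\mathcal{D}$ need not send the designated element below the designated element of the target, so $h$ can scatter the atomless part across $\mathcal{D}$ (which, being bi-embeddable with $\mathcal{B}$, is itself non-superatomic and absorbs atomless subalgebras). The known obstruction for the pair $(\mathcal{A}_\eta,\mathcal{C}_\eta)$ therefore does not apply to $h$, and the transfer is not routine. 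Finally, your closing remark that a degree of b.e.\ categoricity ``would be below $\mathbf{0}^{(\omega_1^{CK})}$'' is unjustified; the implication ``not hyperarithmetically b.e.\ categorical $\Rightarrow$ no degree of b.e.\ categoricity'' is a nontrivial theorem that the paper imports from Theorem~3.1 of~\cite{bazhenov2020}.

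The fix is essentially the sketch you gave in your first paragraph on this case and then abandoned. Every non-superatomic countable Boolean algebra is bi-embeddable with $Int(1+\eta)$ \emph{outright}: the atomless algebra embeds into $\mathcal{B}$ because $\mathcal{B}$ has an atomless subalgebra, and $\mathcal{B}$ embeds into the atomless algebra because the latter is universal for all countable Boolean algebras (no hypothesis on the superatomic part is needed, so the direction you hedged on is free). Hence no grafting onto a ``superatomic skeleton'' is required: take $\mathcal{A}$ a computable copy of $Int(1+\eta)$ and $\mathcal{C}=Int(\mathcal{L})$ for a computable Harrison order $\mathcal{L}\cong\omega_1^{CK}\cdot(1+\eta)$ with no hyperarithmetic descending sequences; both are bi-embeddable with $\mathcal{B}$. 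Given a hyperarithmetic $h\colon\mathcal{A}\hookrightarrow\mathcal{C}$, one extracts a hyperarithmetic descending sequence in $\mathcal{L}$ by repeatedly splitting a nonzero element of $\mathcal{A}$ into two disjoint nonzero parts and observing that the largest right endpoint in the interval representation of the image must strictly decrease along one of the two parts. This is the precise content of your phrase ``pick out an infinite descending chain realizing the atomless type,'' and it is the step that actually needs to be argued against the \emph{whole} target algebra rather than a designated piece of it.
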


The further discourse is arranged as follows. Subsection~\ref{subsect:pairs} contains necessary preliminaries on the technique of pairs of structures, developed by Ash and Knight~\cite{AK90,AK00}. Furthermore, we give a new theorem, which allows to encode some specific families of limitwise monotonic functions into families of computable structures (Theorem~\ref{theo:uniform}). Since the proof of Theorem~\ref{theo:uniform} contains a lot of bulky technical details, this proof is delegated to the last subsection (Subsection~\ref{subsect:proof-uniform}).

Subsection~\ref{subsect:lim-mon} discusses some auxiliary facts about limitwise monotonic functions: in particular, Proposition~\ref{prop:good-lm} shows how one can encode the oracle $\mathbf{0}^{(\alpha)}$, where $\alpha < \omega^{CK}_1$, in a ``sufficiently good'' limitwise monotonic fashion. Subsection~\ref{subsect:proof-BA} proves Theorem~\ref{theo:BA}, and Subsection~\ref{subsect:coroll} obtains Corollary~\ref{corollary:gen-BA}.

%%%%%%%%%%%%%%%%%%%%%%%%%%%%

\subsection{Pairs of computable structures} \label{subsect:pairs}

Let $\mathbf{d}$ be a Turing degree. A function $F\colon \omega\to\omega$ is \emph{$\mathbf{d}$-limitwise monotonic} if there is a $\mathbf{d}$-computable function $f\colon \omega\times\omega \to \omega$ such that:
\begin{itemize}
	\item[(a)] $f(x,s) \leq f(x,s+1)$ for all $x$ and $s$;

	\item[(b)] $F(x) = \lim_s f(x,s)$ for all $x$.
\end{itemize}

%%%%

As per usual, we fix a path through Kleene's $\mathcal{O}$, and we identify computable ordinals with their notations along this path.

Let $\alpha$ be a non-zero computable ordinal. For the sake of convenience, we use the following notation:
\[
	\mathbf{0}^{\langle \alpha\rangle} := \begin{cases}
		\mathbf{0}^{(\alpha-1)}, & \text{if } \alpha < \omega,\\
		\mathbf{0}^{(\alpha)}, & \text{if } \alpha \geq \omega.
	\end{cases}
\]

%%%%%%%%%%%%%%%%%%%%%%%%%%%%%

For a language $L$, \emph{infinitary formulas} of $L$ are formulas of the logic $L_{\omega_1,\omega}$. For a countable ordinal $\alpha$, infinitary $\Sigma_{\alpha}$ and $\Pi_{\alpha}$ formulas are defined in a standard way (see, e.g., \cite[Chapter~6]{AK00}).

Suppose that $\mathcal{A}$ and $\mathcal{B}$ are $L$-structures, and $\alpha$ is a countable ordinal. We say that $\mathcal{A}\leq_{\alpha} \mathcal{B}$ if every infinitary $\Pi_{\alpha}$ sentence true in $\mathcal{A}$ is true in $\mathcal{B}$. The relations $\leq_{\alpha}$ are called \emph{standard back-and-forth relations}.

Ash~\cite{Ash86} provided a complete description of standard back-and-forth relations for countable well-orders~--- here is a small excerpt from the description:

\begin{lemma}[Lemma~7 of~{\cite{Ash86}}; see also Lemma~15.10 of~\cite{AK00}] \label{lem:Ash-001}
	Let $\beta$ be a countable ordinal, and let $k$ be a non-zero natural number. Then $\omega^{\beta} \cdot (k+1) \leq_{2\beta+1} \omega^{\beta} \cdot k$ and $\omega^{\beta} \cdot k \nleq_{2\beta+1} \omega^{\beta} \cdot (k+1)$.
\end{lemma}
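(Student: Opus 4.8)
The statement to prove is that $\omega^{\beta}\cdot(k+1)\leq_{2\beta+1}\omega^{\beta}\cdot k$ while $\omega^{\beta}\cdot k\nleq_{2\beta+1}\omega^{\beta}\cdot(k+1)$. My plan is to reduce both assertions to the ``building block'' fact that $\omega^{\beta}\cdot 2\leq_{2\beta+1}\omega^{\beta}$ and $\omega^{\beta}\nleq_{2\beta+1}\omega^{\beta}\cdot 2$, together with the standard algebra of back-and-forth relations under ordered sums: if $\mathcal L_1\leq_\gamma \mathcal M_1$ and $\mathcal L_2\leq_\gamma\mathcal M_2$ then $\mathcal L_1+\mathcal L_2\leq_\gamma \mathcal M_1+\mathcal M_2$ (and the analogous monotonicity $\leq_\gamma\subseteq\leq_{\gamma'}$ for $\gamma'\leq\gamma$). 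Writing $\omega^\beta\cdot(k+1)=\omega^\beta\cdot(k-1)+\omega^\beta\cdot 2$ and $\omega^\beta\cdot k=\omega^\beta\cdot(k-1)+\omega^\beta$, the positive direction follows immediately by adding $\omega^\beta\cdot(k-1)\leq_{2\beta+1}\omega^\beta\cdot(k-1)$ on the left. So the heart of the matter is the two claims about $\omega^\beta$ versus $\omega^\beta\cdot 2$, and these I would prove by transfinite induction on $\beta$.

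For the induction, the cleanest route is to characterize the relations $\leq_{2\beta+1}$ and $\leq_{2\beta}$ on well-orders of the form $\omega^\beta\cdot n$ (and their finite perturbations $\omega^\beta\cdot n + F$ for finite $F$, which one needs to carry through the induction because back-and-forth games on a sum strip off finite initial pieces). The base case $\beta=0$ is direct: $\leq_1$ compares $\Pi_1$ theories, so $\omega\cdot 2\leq_1\omega$ since every existential-universal... more precisely every $\Pi_1$ sentence (a conjunction of purely universal sentences, here quantifier-free statements about the existence of arbitrarily long finite chains, etc.) true in $\omega\cdot 2$ is true in $\omega$ — both are infinite linear orders with a first element and no last element, so they agree on $\Pi_1$; conversely $\omega\nleq_1\omega\cdot 2$ because ``$\omega\cdot 2$ has an element with infinitely many predecessors'' fails... one has to be a touch careful about what $\Pi_1$ can see, and the right statement is that $\omega\cdot 2$ has a point whose predecessor-chain is infinite while every point of $\omega$ has finite predecessor-chain, which is $\Pi_2$, not $\Pi_1$ — so actually the non-relation at level $1$ must be argued via the back-and-forth game: Player II cannot win the length-$1$ game from $\omega$ to $\omega\cdot 2$ because a single move into the second copy of $\omega$ leaves an interval that looks like $\omega\cdot 2$ again while in $\omega$ everything to the left is finite. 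I would phrase the whole inductive step in terms of the back-and-forth games rather than infinitary formulas, using the standard equivalence, since the combinatorics of splitting $\omega^{\beta+1}=\omega^\beta\cdot\omega$ at a chosen point is transparent there.

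The inductive step: assume the description at $\beta$; prove it at $\beta+1$ (and handle limit $\beta$ separately, where $\omega^\beta\cdot 2$ for $\beta$ limit is handled by the fact that $2\beta+1$ is again a limit-plus-odd and one uses that $\leq_{2\beta+1}=\bigcap_{\gamma<2\beta+1}\leq_\gamma$ together with cofinality in $\omega^\beta$). For $\omega^{\beta+1}\cdot 2\leq_{2\beta+3}\omega^{\beta+1}$: in the length-$(2\beta+3)$ game with Player I moving first in $\omega^{\beta+1}$, Player II responds in $\omega^{\beta+1}\cdot 2$; since $\omega^{\beta+1}=\omega^\beta\cdot\omega$, any finite tuple from $\omega^{\beta+1}$ sits inside $\omega^\beta\cdot m$ for some finite $m$ with an $\omega^\beta\cdot\omega$ tail, and Player II copies this configuration into the first copy of the target, using the inductive fact that the relevant residual blocks of order type $\omega^\beta\cdot(\text{something})$ or $\omega^\beta\cdot\omega$ are $\leq_{2\beta+1}$-below the corresponding blocks — the ``$+2$'' in the index $2\beta+3=2(\beta+1)+1$ is exactly what buys the one extra round of ``locate the point, then play the sub-game of length $2\beta+1$ inside a block.'' For the non-relation $\omega^{\beta+1}\nleq_{2\beta+3}\omega^{\beta+1}\cdot 2$: Player I plays the first point of the second copy of $\omega^{\beta+1}$ in the target $\omega^{\beta+1}\cdot 2$; whatever point $a$ Player II picks in $\omega^{\beta+1}$, the final segment $[a,\infty)$ in $\omega^{\beta+1}$ has order type $\omega^{\beta+1}$ (or $\omega^\beta\cdot\omega$ shifted), which is $\nleq_{2\beta+1}$ below the final segment $\omega^{\beta+1}$ in the target only after we also note the initial segments differ — the precise move is to then play, inside the initial segments, a witness that the inductive non-relation $\omega^\beta\nleq_{2\beta+1}\omega^\beta\cdot 2$ (or the $n$ vs $n+1$ form) provides. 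The main obstacle I anticipate is bookkeeping the finite-perturbation strengthening needed to make the induction go through — one cannot prove the clean statement about $\omega^\beta\cdot k$ in isolation; the induction hypothesis must cover $\omega^\beta\cdot k + \mathbf r$ for all finite $\mathbf r$, because splitting a copy of $\omega^\beta$ at an interior point produces exactly such orders, and getting the bookkeeping of indices ($2\beta+1$ vs $2\beta$, limit stages, the $+2$ per successor step) consistent across all these cases is the delicate part. Everything else is the routine back-and-forth calculus, and since this is Lemma~7 of~\cite{Ash86} I would in the write-up simply cite it, sketching only the block-splitting idea above.
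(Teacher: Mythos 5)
The paper itself offers no proof of this lemma; it is imported verbatim from Ash (Lemma~7 of~\cite{Ash86}), so the only fair comparison is between your sketch and the standard argument you would be reconstructing. Your overall architecture (reduce to the two-block case, induct on $\beta$, gain $+2$ in the subscript at each successor step via ``locate the block, then play the length-$(2\beta+1)$ subgame inside it'', and strengthen the induction hypothesis to orders of the form $\omega^{\beta}\cdot k+\mathbf{r}$ with $\mathbf{r}$ finite) is the right shape and matches how the result is actually proved.

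However, your base case contains a genuine error. You attempt to establish $\omega\nleq_1\omega\cdot 2$ and, after correctly sensing that the natural distinguishing sentence is $\Pi_2$ rather than $\Pi_1$, you fall back on a ``length-$1$ game'' argument. That argument does not work, and the claim itself is false: with the paper's definition, $\mathcal{A}\leq_1\mathcal{B}$ holds iff every $\Sigma_1$ (essentially: finitary existential) sentence true in $\mathcal{B}$ is true in $\mathcal{A}$, and since every finite linear order embeds into every infinite linear order, \emph{any} two infinite linear orders are $\equiv_1$; in particular $\omega\equiv_1\omega\cdot 2$, so Player~II does win your length-$1$ game. The confusion is an off-by-one in $\beta$: the pair $(\omega\cdot 2,\,\omega)$ is the case $\beta=1$ of the lemma and lives at level $2\beta+1=3$ (where the non-relation does hold, witnessed e.g.\ by the $\Pi_3$ sentence ``every element has only finitely many predecessors''), whereas the true base case $\beta=0$ concerns the finite orders, namely $\mathbf{2}\leq_1\mathbf{1}$ and $\mathbf{1}\nleq_1\mathbf{2}$, where the witness is simply ``there exist two distinct elements''. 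A second, smaller overstatement: additivity of $\leq_{\gamma}$ over ordered sums reduces the \emph{positive} assertion to the two-block case, but the \emph{negative} assertion $\omega^{\beta}\cdot k\nleq_{2\beta+1}\omega^{\beta}\cdot(k+1)$ does not follow formally from $\omega^{\beta}\nleq_{2\beta+1}\omega^{\beta}\cdot 2$ by summing, since non-relations are not congruences for $+$; it must be carried through the induction for every $k$ (your later remark about strengthening the hypothesis implicitly concedes this, but the opening paragraph claims more than the reduction delivers).
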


Let $\alpha$ be a non-zero computable ordinal. A family $K = \{ \mathcal{A}_i\,\colon i\in \omega\}$ of $L$-structures is called \emph{$\alpha$-friendly} if the structures $\mathcal{A}_i$ are uniformly computable, and the relations
\[
		B_{\beta} = \big\{ (i,\bar a,j,\bar b) \,\colon\ i,j\in \omega,\, \bar a \textnormal{~is from~} \mathcal{A}_i,\, \bar b\textnormal{~is from~} \mathcal{A}_j,\,
		  (\mathcal{A}_i,\bar a)\leq_{\beta} (\mathcal{A}_j,\bar b) \big\}
\]
are c.e., uniformly in $\beta<\alpha$.

%%%%%%%%%%%%%%%%%%%%%%%%%%%%%

\begin{theorem}[Theorem 4.2 of~\cite{Bazh17}]\label{theo:Bazh-17}
	Let $\alpha$ be a non-zero computable ordinal. Suppose that $\{ \mathcal{A}_k\,\colon k\in\omega\}$ is an $\alpha$-friendly family of $L$-structures such that $\mathcal{A}_{k+1} \leq_{\alpha} \mathcal{A}_k$ for all $k\in\omega$. Then for any $\mathbf{0}^{\langle\alpha\rangle}$-limitwise monotonic function $g(x)$, there is a uniformly computable sequence of structures $(\mathcal{C}_n)_{n\in\omega}$ such that for every $n$, we have $\mathcal{C}_{n} \cong \mathcal{A}_{g(n)}$.
\end{theorem}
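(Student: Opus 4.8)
\medskip
\noindent\emph{Proof plan.} The plan is to realise the construction of $(\mathcal{C}_n)_{n\in\omega}$ as an instance of the $\alpha$-systems theorem (Ash's metatheorem) that underlies the Ash--Knight pairs-of-structures theorem~\cite{AK00}, with a monotone approximation of $g$ playing the role of the instruction function. The statement is the natural generalisation of the pairs-of-structures theorem from a two-element family directed by a $\Sigma^0_\alpha$ set to a $\leq_\alpha$-descending $\omega$-indexed family directed by a limitwise monotonic function.

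First I would fix a $\mathbf{0}^{\langle\alpha\rangle}$-computable $f\colon\omega\times\omega\to\omega$ with $f(x,s)\leq f(x,s+1)$ and $g(x)=\lim_s f(x,s)$, as provided by the definition of limitwise monotonicity. Each $\mathcal{C}_n$ is then built as a ``moving target'' copy: at stage $s$ the finite portion of $\mathcal{C}_n$ produced so far is a partial isomorphism into $\mathcal{A}_{f(n,s)}$, and whenever $f(n,s+1)>f(n,s)$ the construction is redirected into $\mathcal{A}_{f(n,s+1)}$. The crucial point is that monotonicity of $f$ makes every redirection legal: since $\mathcal{A}_{k+1}\leq_\alpha\mathcal{A}_k$ and $\leq_\alpha$ is transitive, $f(n,s+1)\geq f(n,s)$ gives $\mathcal{A}_{f(n,s+1)}\leq_\alpha\mathcal{A}_{f(n,s)}$, and hence the back-and-forth relations furnished by $\alpha$-friendliness allow us to re-embed the finite part built so far into $\mathcal{A}_{f(n,s+1)}$; were the chain ascending, or $f$ not monotone, no such redirection would in general be available.

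Concretely, I would define, uniformly in $n$, an $\alpha$-system whose nodes record the index of the structure currently being copied (an approximation to $g(n)$), the finite partial isomorphism from $\mathcal{C}_n$ into that structure, and the back-and-forth data carried along to license redirections; its enumeration relations are the $\alpha$-friendly relations $B_\beta$, $\beta<\alpha$, and its instruction function codes $f(n,\cdot)$. One then checks that this is a genuine $\alpha$-system with an admissible instruction function: the successor and limit clauses of the definition follow from the back-and-forth properties of the relations $\leq_\beta$ together with the descending chain condition, and the instruction function has the complexity an $\alpha$-system requires precisely because $f$ is $\mathbf{0}^{\langle\alpha\rangle}$-computable and nondecreasing. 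The metatheorem then yields, uniformly in $n$, a computable run of the system along its true path, and reading the finite maps off the true path produces the uniformly computable structure $\mathcal{C}_n$. Finally, because $f(n,\cdot)$ changes only finitely often and stabilises at $g(n)$, after the last redirection the construction along the true path is an honest back-and-forth copying of $\mathcal{A}_{g(n)}$, so the finite maps union to an isomorphism $\mathcal{C}_n\cong\mathcal{A}_{g(n)}$.

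I expect the main obstacle to be entirely in the bookkeeping: organising the $\alpha$-system so that its nested priority/true-stages structure correctly tracks, for all $n$ simultaneously, the finitely many redirections attached to each $\mathcal{C}_n$, while keeping the whole sequence uniformly computable. This is exactly the sort of bulky but routine verification that the paper defers in analogous places; a self-contained true-stages construction avoiding the metatheorem as a black box would run on the same combinatorics.
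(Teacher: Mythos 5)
Your plan is exactly the construction the paper uses (for the generalization, Theorem~\ref{theo:uniform}, proved in Subsection~\ref{subsect:proof-uniform}): nodes of the $\alpha$-system are pairs (current index, finite partial isomorphism), the instruction function is driven by the monotone approximation of $g$, redirections are licensed by $\mathcal{A}_{k+1}\leq_\alpha\mathcal{A}_k$ together with the $\alpha$-friendly back-and-forth relations, and Ash's metatheorem plus stabilization of the approximation yields $\mathcal{C}_n\cong\mathcal{A}_{g(n)}$ uniformly. This is correct and essentially identical to the paper's argument.
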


Theorem~\ref{theo:Bazh-17} admits the following generalization:

\begin{theorem}\label{theo:uniform}
	Let $\alpha$ be a computable infinite ordinal. Suppose that $(\alpha_i)_{i\in\omega}$	is a computable sequence of non-zero ordinals such that $\alpha_i < \alpha$ for all $i$. Suppose that $\{ \mathcal{A}^i_k\}_{i,k\in\omega}$ is an $\alpha$-friendly family of $L$-structures such that $\mathcal{A}^i_{k+1} \leq_{\alpha_i} \mathcal{A}^i_k$ for all $i$ and $k$. Consider a family of functions $(f_i)_{i\in\omega}$ such that each $f_i$ is $\mathbf{0}^{\langle \alpha_i\rangle}$-li\-mit\-wise monotonic, uniformly in $i\in\omega$, i.e. there is an index $e\in\omega$ such that for every $i$, the binary function $\Phi_e^{\emptyset^{\langle \alpha_i\rangle}}(i,\cdot,\cdot)$ approximates $f_i$ in a limitwise monotonic fashion. Then there is a uniformly computable sequence of structures $(\mathcal{C}^i_n)_{i,n\in\omega}$ such that $\mathcal{C}^i_n \cong \mathcal{A}^i_{f_i(n)}$ for all $i$ and $n$.
\end{theorem}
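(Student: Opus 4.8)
The plan is to reduce Theorem~\ref{theo:uniform} to Theorem~\ref{theo:Bazh-17} by a uniformization/coding trick, rather than re-running the $\alpha$-system construction from scratch. The obvious difference between the two statements is that in Theorem~\ref{theo:Bazh-17} we have a single sequence $(\mathcal{A}_k)$ with a single back-and-forth chain of type $\alpha$ and a single limitwise monotonic function, whereas here we have an $\omega$-indexed family of chains $(\mathcal{A}^i_k)_k$, each with its own (possibly smaller) ordinal $\alpha_i<\alpha$, and an $\omega$-indexed family of functions $f_i$ each approximated at a different jump level $\mathbf{0}^{\langle\alpha_i\rangle}$, all uniformly in $i$. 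So the first step is to assemble the data into the shape demanded by Theorem~\ref{theo:Bazh-17}: I would fix a computable bijection $\langle\cdot,\cdot\rangle\colon\omega\times\omega\to\omega$ and define $\mathcal{A}_{\langle i,k\rangle}:=\mathcal{A}^i_k$, and define $g(\langle i,n\rangle):=\langle i, f_i(n)\rangle$; then a uniformly computable sequence $(\mathcal{C}_m)$ with $\mathcal{C}_m\cong\mathcal{A}_{g(m)}$ immediately yields $\mathcal{C}^i_n:=\mathcal{C}_{\langle i,n\rangle}\cong\mathcal{A}^i_{f_i(n)}$ by uncurrying, which is exactly the conclusion we want.

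The work, then, is to verify the two hypotheses of Theorem~\ref{theo:Bazh-17} for this repackaged data, and this is where the genuine content lies. For $\alpha$-friendliness of $\{\mathcal{A}_m\}$: the structures are uniformly computable because the original double-indexed family is; and the back-and-forth relations $B_\beta$ for $\beta<\alpha$ need to be uniformly c.e. Here I would use that $\mathcal{A}_{\langle i,k\rangle}\leq_\beta\mathcal{A}_{\langle i',k'\rangle}$ is only ``interesting'' when $i=i'$ (for $i\ne i'$ the structures may be unrelated, but that is fine — we never need a back-and-forth relation between different $i$'s because $g$ preserves the first coordinate, so I should choose the repackaging so that we only ever compare within a fixed $i$; alternatively, one notes that the family $\{\mathcal{A}^i_k\}_{i,k}$ is $\alpha$-friendly by hypothesis, so all the relations $B_\beta$ over the full double index are already uniformly c.e., and restricting to matching first coordinates is a computable operation). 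The real subtlety is the chain hypothesis $\mathcal{A}_{m+1}\leq_\alpha\mathcal{A}_m$: under $g$, the only pairs $(g(\langle i,n\rangle), g(\langle i,n+1\rangle)) = (\langle i, f_i(n)\rangle, \langle i, f_i(n+1)\rangle)$ that matter have, since $f_i$ is nondecreasing in the limit, $f_i(n+1)\geq f_i(n)$; and we know $\mathcal{A}^i_{k+1}\leq_{\alpha_i}\mathcal{A}^i_k$, hence by transitivity of the standard back-and-forth relations $\mathcal{A}^i_{k'}\leq_{\alpha_i}\mathcal{A}^i_k$ whenever $k'\geq k$, and since $\alpha_i<\alpha$ we get $\leq_{\alpha_i}\,\subseteq\,\leq_\alpha$ is the wrong direction — one has $\mathcal{A}\leq_\beta\mathcal{B}\Rightarrow\mathcal{A}\leq_\gamma\mathcal{B}$ for $\gamma\leq\beta$, not for $\gamma\geq\beta$. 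So I cannot literally assert $\mathcal{A}_{m+1}\leq_\alpha\mathcal{A}_m$ for the repackaged sequence; this forces a reorganization of the reduction, and I expect this to be the main obstacle.

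To get around it, the right move is to not use Theorem~\ref{theo:Bazh-17} as a black box but to re-examine its proof (the $\alpha$-system / pairs-of-structures machinery of Ash and Knight): what that proof really needs is, at each step where the limitwise monotonic approximation for coordinate $n$ changes its value from $k$ to $k'>k$, the ability to ``convert'' a partial isomorphism built so far into $\mathcal{A}_k$ into one into $\mathcal{A}_{k'}$, and this conversion is licensed precisely by $\mathcal{A}_{k'}\leq_{\beta}\mathcal{A}_k$ at the appropriate level $\beta$ tied to the stage of the $\alpha$-system. In the present setting, for the $i$-th block the relevant level is governed by $\alpha_i$ and the approximations live at $\mathbf{0}^{\langle\alpha_i\rangle}$, so each block $i$ is internally just an instance of Theorem~\ref{theo:Bazh-17} for the $\alpha_i$-friendly family $(\mathcal{A}^i_k)_k$ (note $\alpha_i$-friendliness of each fixed block follows from $\alpha$-friendliness of the whole family together with $\alpha_i\le\alpha$, uniformly in $i$ because the $B_\beta$ are uniformly c.e. in $\beta<\alpha\supseteq\alpha_i$). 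The uniformity clause — a single index $e$ with $\Phi_e^{\emptyset^{\langle\alpha_i\rangle}}(i,\cdot,\cdot)$ approximating $f_i$ — guarantees we can run all these instances simultaneously by a single computable procedure: at stage $s$ we feed each block its current approximation values (obtained by running $\Phi_e$ with the appropriate oracle initial segment, exactly as in the proof of Theorem~\ref{theo:Bazh-17}) and perform the corresponding $\alpha_i$-system step. I would therefore structure the write-up as: (1) reduce to the case $m=1$ (the arity-$n$ case is handled coordinatewise, as in the proofs of the linear-order theorems above); (2) recall the $\alpha$-system construction underlying Theorem~\ref{theo:Bazh-17}; (3) observe that its construction and verification are uniform in the parameters $(\alpha_i, \emptyset^{\langle\alpha_i\rangle}\text{-index}, (\mathcal{A}^i_k)_k)$ given the single index $e$ and the uniform $\alpha$-friendliness, so running $\omega$-many copies in parallel produces the desired $(\mathcal{C}^i_n)_{i,n}$. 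The genuine technical burden — checking that every ingredient of the Ash–Knight construction is uniform in $i$ — is exactly what is deferred to Subsection~\ref{subsect:proof-uniform}, so at this point I would state the theorem and postpone the bulky verification, noting that the non-uniform version (Theorem~\ref{theo:Bazh-17}) is the base case $i$ fixed and that only bookkeeping separates the two.
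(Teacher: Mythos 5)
Your final plan---discarding the black-box reduction to Theorem~\ref{theo:Bazh-17} (which indeed fails because $\leq_{\alpha_i}$ does not imply $\leq_{\alpha}$ for $\alpha_i<\alpha$) and instead rerunning the Ash--Knight construction once per index $i$ as an $\alpha_i$-system, with uniformity supplied by the single index $e$, the uniform $\alpha$-friendliness, and the effectivity (``moreover'') clause of the metatheorem Theorem~\ref{th:app}---is exactly what the paper does in Subsection~\ref{subsect:proof-uniform}, where an $\alpha_M$-system $\mathbf{S}_M$ and a $\Delta^0_{\alpha_M}$ instruction function $q_{M,n}$ are produced uniformly in $(M,n)$. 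Two incidental slips do not affect this: limitwise monotonicity makes the approximation $g(i,x,s)$ nondecreasing in $s$, not $f_i$ nondecreasing in $n$, and there is no arity parameter $m$ in this theorem to reduce to $1$.
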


Recall that the proof of Theorem~\ref{theo:uniform} is given in Subsection~\ref{subsect:proof-uniform}.

%%%%%%%%%%%%%%%%%%%%%%%%%%%%%%%%%

\subsection{Limitwise monotonic functions} \label{subsect:lim-mon}

In this subsection, we give two useful facts about limitwise monotonicity. Recall that a partial function $\xi(x)$ \emph{dominates} a partial function $\psi(x)$ if
\[
	(\forall^{\infty} x) [ \psi(x)\!\downarrow\ \ \Rightarrow\  \xi(x)\!\downarrow\ > \psi(x)].
\]

\begin{lemma}[follows from Theorem 4.5.4 of \cite{Soare-New}]\label{lem:Soare}
	Let $A$ be an oracle.
	If a total function $g(x)$ dominates every partial $A$-computable function, then $A'\leq_T g\oplus A$.
\end{lemma}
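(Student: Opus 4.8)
The plan is to prove \cref{lem:Soare}, the statement that if a total function $g(x)$ dominates every partial $A$-computable function, then $A' \leq_T g \oplus A$. This is the relativization of Theorem~4.5.4 of~\cite{Soare-New}, so the strategy is to carry out the standard unrelativized argument with $A$ written into every oracle.

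First I would fix a partial $A$-computable function that is ``universal for the halting problem relative to $A$'': namely, let $\psi(x)$ be the partial $A$-computable function defined by $\psi(x) \simeq \mu s.\ \Phi_{x,s}^{A}(x)\!\downarrow$, i.e.\ $\psi(x)$ converges exactly when $x \in A'$, and when it converges its value is the stage at which the computation halts. Since $g$ dominates every partial $A$-computable function, in particular $g$ dominates $\psi$, so there is some $n_0$ such that for all $x \geq n_0$, if $x \in A'$ then $g(x) > \psi(x)$, meaning the computation $\Phi_x^A(x)$ halts within $g(x)$ steps. Now to decide, using the oracle $g \oplus A$, whether a given $x \geq n_0$ lies in $A'$: compute $g(x)$ from the $g$-part of the oracle, then run $\Phi_x^A(x)$ for $g(x)$ steps using the $A$-part of the oracle; output ``yes'' iff it has halted by then. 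Finitely many values below $n_0$ are absorbed into the Turing reduction as a finite table. The correctness is immediate: if $x \in A'$ and $x \geq n_0$ the computation halts by stage $g(x)$ by domination, and if $x \notin A'$ it never halts at all.

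The only subtlety I would need to be careful about is the precise meaning of ``dominates'' used here, as recalled in the excerpt just before the lemma: $\xi$ dominates $\psi$ means $(\forall^\infty x)[\psi(x)\!\downarrow\ \Rightarrow\ \xi(x)\!\downarrow\ > \psi(x)]$. Since our $g$ is total this causes no trouble, but I would make sure to invoke it only on inputs where $\psi(x)$ converges — which is exactly the case $x \in A'$ — so the implication is vacuous elsewhere and the argument goes through. I would also remark that one must enumerate the partial $A$-computable functions with $A$ itself available, so the construction of $\psi$ is genuinely $A$-computable (partial), and hence a legitimate target of the domination hypothesis.

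I do not expect a serious obstacle here; the result is classical and its relativization is routine, which is presumably why the paper states it as ``follows from'' a textbook theorem. If anything, the mild point worth spelling out is merely that the reduction genuinely needs both halves of the oracle $g \oplus A$ — $g$ to get the stage bound, $A$ to actually run the computation — and that a finite initial segment of exceptions is handled by hard-coding. I would keep the write-up to a few lines, essentially the paragraph above, and cite~\cite[Theorem~4.5.4]{Soare-New} for the unrelativized statement.
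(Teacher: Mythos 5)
Your argument is correct and is essentially identical to the paper's: the paper also defines $\psi(x)$ as the least stage $s$ with $x\in A'[s]$ (the same function as your halting-time $\psi$, up to the choice of enumeration of $A'$), invokes domination to get a threshold beyond which $x\in A'\Leftrightarrow x\in A'[g(x)]$, and concludes $A'\leq_T g\oplus A$. Your extra remarks on the meaning of ``dominates'' and on hard-coding the finitely many exceptions are fine but not points of divergence.
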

\begin{proof}
	Fix an $A$-effective enumeration $\{ A'[s] \}_{s\in\omega}$ of the set $A'$, and consider the following partial $A$-computable function:
	\[
		\psi(x) := \begin{cases}
			\text{the least } s \text{  such that } x\in A'[s], & \text{if } x\in A',\\
			\text{undefined}, & \text{if } x\not\in A'.
		\end{cases}
	\]
	Since the function $g$ dominates $\psi$, there is a number $e$ such that:
	\[
		(\forall x \geq e) ( x\in A'\ \Leftrightarrow\ x\in A'[g(x)] ).
	\]
	Therefore, $A' \leq_T g\oplus A$.
\end{proof}

\begin{proposition}\label{prop:good-lm}
	Let $\alpha$ be a computable ordinal. There is a total function $f_{[\alpha]}(x)$ with the following property: If a total function $g$ dominates $f_{[\alpha]}$, then $g\geq_T \mathbf{0}^{(\alpha)}$. Furthermore:
	\begin{itemize}
		\item[(a)] If $\alpha = \beta+1$, then $f_{[\alpha]}$ is $\mathbf{0}^{(\beta)}$-limitwise monotonic.

		\item[(b)] If $\alpha$ is a limit ordinal, then $f_{[\alpha]}$ is $\mathbf{0}^{(\alpha)}$-computable.
	\end{itemize}
\end{proposition}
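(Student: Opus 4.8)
The plan is to build $f_{[\alpha]}$ by transfinite recursion on $\alpha$, using \Cref{lem:Soare} as the engine that converts ``domination'' into ``computing a jump''. The base case $\alpha=0$ is trivial (take $f_{[0]}$ to be any computable function, e.g. the constant $0$). For the successor case $\alpha=\beta+1$: by \Cref{lem:Soare} applied with the oracle $A=\mathbf 0^{(\beta)}$, it suffices to produce a single total function $f_{[\beta+1]}$ that dominates \emph{every} partial $\mathbf 0^{(\beta)}$-computable function, since then any total $g$ dominating $f_{[\beta+1]}$ also dominates every partial $\mathbf 0^{(\beta)}$-computable function (domination is transitive on the ``eventually above'' order, modulo the standard padding argument), whence $\mathbf 0^{(\beta+1)}=(\mathbf 0^{(\beta)})'\leq_T g\oplus \mathbf 0^{(\beta)}$. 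Here we will want $g\geq_T\mathbf 0^{(\alpha)}$ on the nose, not just $g\oplus\mathbf 0^{(\beta)}$; this is arranged by building into the definition of $f_{[\beta+1]}$ (or into a preliminary inductive use of $f_{[\beta]},\dots$) enough information to recover $\mathbf 0^{(\beta)}$ from $g$ as well — concretely, one can interleave a fast-growing modulus-type function for $\mathbf 0^{(\beta)}$, using the inductive hypothesis for $\beta$, so that domination of $f_{[\beta+1]}$ already forces $g\geq_T\mathbf 0^{(\beta)}$, and then the \Cref{lem:Soare} step upgrades this to $\mathbf 0^{(\beta+1)}$. A standard construction of a function dominating all partial $X$-computable functions that is moreover $X$-limitwise monotonic: enumerate the partial $X$-computable functions $\psi_e$, and set $f(x,s)=\max\{\psi_{e,s}(y)+1 : e,y\le x,\ \psi_{e,s}(y)\!\downarrow\}$, which is nondecreasing in $s$ and $X$-computable, with $\lim_s f(x,s)$ finite because only finitely many of the relevant computations halt; this witnesses that $f_{[\beta+1]}(x):=\lim_s f(x,s)$ is $\mathbf 0^{(\beta)}$-limitwise monotonic, giving clause (a).

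For the limit case: let $\alpha=\sup_n \alpha_n$ along the fixed path through $\mathcal O$, with $\alpha_n$ a computable increasing sequence. Define $f_{[\alpha]}(\langle n,x\rangle)=f_{[\alpha_n]}(x)$ (suitably coded on a computable partition of $\omega$ into infinitely many infinite columns). If a total $g$ dominates $f_{[\alpha]}$, then for each $n$ the $n$-th column of $g$ dominates $f_{[\alpha_n]}$, so by the inductive hypothesis $g\geq_T\mathbf 0^{(\alpha_n)}$ uniformly in $n$; since a uniform sequence of the $\mathbf 0^{(\alpha_n)}$ together with the (computable) sequence $(\alpha_n)$ computes $\mathbf 0^{(\alpha)}$ (this is exactly how $\mathbf 0^{(\alpha)}$ is defined at limit notations along the path through $\mathcal O$), we get $g\geq_T\mathbf 0^{(\alpha)}$. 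For the complexity bound (b): each $f_{[\alpha_n]}$ is $\mathbf 0^{\langle\alpha_n\rangle}$-limitwise monotonic, hence in particular $\mathbf 0^{(\alpha_n)}$-computable, hence $\mathbf 0^{(\alpha)}$-computable, uniformly in $n$; therefore the assembled $f_{[\alpha]}$ is $\mathbf 0^{(\alpha)}$-computable. (Note that in the limit case we do \emph{not} claim limitwise monotonicity, only $\mathbf 0^{(\alpha)}$-computability, matching the statement.)

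The main obstacle I expect is bookkeeping the \emph{exact} oracle in the successor case — making sure the function witnessing (a) is limitwise monotonic over $\mathbf 0^{(\beta)}$ and not merely over $\mathbf 0^{(\beta+1)}$, and simultaneously arranging that a dominating $g$ already computes $\mathbf 0^{(\beta)}$ and not just $g\oplus\mathbf 0^{(\beta)}$. The cleanest fix is to prove by induction the slightly stronger statement ``$f_{[\alpha]}$ is a single function such that any total $g$ dominating it satisfies $g\ge_T\mathbf 0^{(\alpha)}$, with the complexity bounds (a)/(b)'', and in the successor step to take $f_{[\beta+1]}$ to be (a coding of) the pair consisting of the all-partial-$\mathbf 0^{(\beta)}$-functions-dominating function $f$ above \emph{and} $f_{[\beta]}$ itself; domination of the first component plus \Cref{lem:Soare} yields $\mathbf 0^{(\beta)}{}'=\mathbf 0^{(\beta+1)}$ from $g\oplus\mathbf 0^{(\beta)}$, while domination of the second component yields $g\ge_T\mathbf 0^{(\beta)}$ by the inductive hypothesis, and combining the two gives $g\ge_T\mathbf 0^{(\beta+1)}$ outright. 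A secondary, purely notational, point is verifying that the coding by columns preserves limitwise monotonicity and the relativized complexity bounds uniformly, which is routine.
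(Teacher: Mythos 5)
Your overall strategy coincides with the paper's: recursion on $\alpha$, with the successor step interleaving $f_{[\beta]}$ with a $\mathbf 0^{(\beta)}$-limitwise monotonic function dominating every partial $\mathbf 0^{(\beta)}$-computable function (so that Lemma~\ref{lem:Soare} upgrades $g\geq_T\mathbf 0^{(\beta)}$ to $g\geq_T\mathbf 0^{(\beta+1)}$), and the limit step splicing the $f_{[\beta_i]}$ into columns. The base and successor cases are fine as you present them; your ``cleanest fix'' of pairing the all-dominating function with $f_{[\beta]}$ itself is exactly what the paper does, via $f_{[\beta+1]}(2x)=f_{[\beta]}(x)$ and $f_{[\beta+1]}(2x+1)=1+\sum_{i\le x}\{\varphi_i^{\emptyset^{(\beta)}}(x)\,\colon \varphi_i^{\emptyset^{(\beta)}}(x)\!\downarrow\}$.

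The step that does not go through as written is in the limit case, where you assert that ``by the inductive hypothesis $g\geq_T\mathbf 0^{(\alpha_n)}$ uniformly in $n$.'' The inductive hypothesis you carry (``any total $g$ dominating $f_{[\gamma]}$ computes $\mathbf 0^{(\gamma)}$'') is non-uniform on two counts: the reduction depends on the finite exceptional set where domination fails, and nothing in the statement supplies a single reduction index working for all $n$. Without uniformity you only obtain $g\geq_T\mathbf 0^{(\alpha_n)}$ for each $n$ separately, which does not yield $g\geq_T\mathbf 0^{(\alpha)}$. The paper repairs precisely this by strengthening the recursion: alongside $f_{[\gamma]}$ it simultaneously defines a Turing operator $\Phi(\gamma;\cdot)$ such that whenever $g(x)>f_{[\gamma]}(x)$ for \emph{all} $x$ (everywhere, not merely almost everywhere), one has $\emptyset^{(\gamma)}=\Phi^{g}(\gamma;\cdot)$. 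At a limit stage, if $g$ dominates $f_{[\alpha]}$ then all but finitely many columns $g_i$ satisfy this everywhere-domination hypothesis, so $\emptyset^{(\beta_i)}=\Phi^{g_i}(\beta_i;\cdot)$ uniformly for $i\geq j_0$, and $\mathbf 0^{(\alpha)}\leq_T g$ follows. Your argument needs this (or an equivalent) uniformization built into the induction; with it added, your proof matches the paper's.
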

\begin{proof}
	%As per usual, we fix a path through $\mathcal{O}$, and identify computable ordinals with their notations along this path.
	Our proof employs recursion on $\alpha$.

	Without loss of generality, we may assume that for any oracle $A$, the partial computable function $\varphi^A_0$ satisfies the following:
	\[
		\varphi_0^A(x) := \begin{cases}
			\text{the least } s \text{ such that } \varphi_{x,s}^{A}(x)\!\downarrow, & \text{if such } s \text{ exists},\\
			\text{undefined}, & \text{otherwise}.
		\end{cases}
	\]

	We fix a Turing operator $\Psi$ with the following property: Let $\delta$ be a computable limit ordinal, and let $(\gamma_i)_{i\in\omega}$ be the standard fundamental sequence for $\delta$ (this sequence is induced by the notation of $\delta$ along our path through $\mathcal{O}$). Then we have
	\[
		\emptyset^{(\delta)} =\Psi^X\text{ where }X=\{ \langle i, x\rangle \,\colon i\in\omega,\, x\in\emptyset^{(\gamma_i)} \}.
	\]

	In order to obtain the desired functions, we simultaneously define $f_{[\alpha]}$ and a Turing operator $\Phi(\alpha;\cdot)$ satisfying the following condition: For any $\alpha$ and any $g$, if $g(x) > f_{[\alpha]}(x)$ \emph{for all} numbers $x$, then we have $\emptyset^{(\alpha)} = \Phi^{g}(\alpha;\cdot)$.

	\textsc{Case $\alpha = 0$.} We define $f_{[0]}(x):=0$ and $\Phi(0;\cdot):=0$.

	\textsc{Case $\alpha = \beta+1$.} Assume that the desired objects $f_{[\beta]}$ and $\Phi(\beta;\cdot)$ have been already defined. If $\beta = \gamma + 1$, then $f_{[\beta]}$ is $\mathbf{0}^{(\gamma)}$-limitwise monotonic. Thus, we deduce that $f_{[\beta]} \leq_T \mathbf{0}^{(\beta)}$. We set
	\begin{align*}
		f_{[\beta+1]}(2x) &:= f_{[\beta]}(x),\\
		f_{[\beta+1]}(2x+1) &:= 1 + \sum_{i\leq x} \{ \varphi^{\emptyset^{(\beta)}}_i(x)\,\colon \varphi^{\emptyset^{(\beta)}}_i(x) \! \downarrow\}.
	\end{align*}
	It is not hard to show that the function $f_{[\beta+1]}$ is $\mathbf{0}^{(\beta)}$-limitwise monotonic.

	Assume that a total function $g$ dominates $f_{[\beta+1]}$. Then the function $\tilde{g}(x):=g(2x)$ dominates $f_{[\beta]}$, and hence, we have $g\geq_T \tilde{g} \geq_T \mathbf{0}^{(\beta)}$. On the other hand, the function $\widehat{g}(x):=g(2x+1)$ dominates every partial $\mathbf{0}^{(\beta)}$-com\-pu\-table function $\varphi_i^{\emptyset^{(\beta)}}$. Therefore, by Lemma~\ref{lem:Soare}, $\widehat{g}\oplus \mathbf{0}^{(\beta)}$ computes $\mathbf{0}^{(\beta+1)}$. Thus, we obtain:
	\[
		g\equiv_T g\oplus \mathbf{0}^{(\beta)} \geq_T \widehat{g}\oplus \mathbf{0}^{(\beta)} \geq_{T} \mathbf{0}^{(\beta+1)}.
	\]

	Now we describe how to define $\Phi(\beta+1;\cdot)$. For the sake of simplicity, we assume that $g(x) > f_{[\beta+1]}(x)$ for all $x$, and we discuss the definition of $\Phi^g(\beta+1;\cdot)$ in this case. Since $\tilde{g}(y) > f_{[\beta]}(y)$ for all $y$, the function $\Phi^{\tilde{g}}(\beta;\cdot)$ is equal to the characteristic function of $\emptyset^{(\beta)}$.

	Note that for any $y$, if the value $\varphi^{\emptyset^{(\beta)}}_0(y)$ is defined, then $\widehat{g}(y) > \varphi^{\emptyset^{(\beta)}}_0(y)$. Our agreement about the partial function $\varphi_0^A$ implies that for all $y\in\omega$, we have
	\[
		y \in \emptyset^{(\beta+1)}\ \Leftrightarrow\ \varphi^{\emptyset^{(\beta)}}_{y,\widehat{g}(y)}(y)\!\downarrow\!.
	\]
	Since $\emptyset^{(\beta)} = \Phi^{\tilde{g}}(\beta;\cdot)$, it is now clear how one can obtain the desired equality $\Phi^g(\beta+1;\cdot ) = \emptyset^{(\beta+1)}$.

	\textsc{Case of limit $\alpha$.} Consider the standard fundamental sequence $(\beta_i)_{i\in\omega}$ for the ordinal $\alpha$. The function $f_{[\alpha]}$ is defined as follows:
	\[
		f_{[\alpha]}(\langle i,x\rangle) := f_{[\beta_i]}(x).
	\]
	By recursion on $\alpha$, it is not hard to show that $f_{[\alpha]}$ is $\mathbf{0}^{(\alpha)}$-computable.

	Assume that a total function $g$ dominates $f_{[\alpha]}$. Consider the functions $g_{i}(x):= g(\langle i,x\rangle)$, $i\in\omega$. Since $g$ dominates $f_{[\alpha]}$, there is a number $j_0$ such that
	\[
		(\forall i\geq j_0) (\forall x) (g_i(x) > f_{[\beta_i]}(x)).
	\]
	Hence, for every $i\geq j_0$, we have $\emptyset^{(\beta_i)} = \Phi^{g_i}(\beta_i;\cdot)$. Therefore, we deduce
	\[
		\mathbf{0}^{(\alpha)} \equiv_T \{ \langle i,y\rangle \,\colon i\geq j_0,\ y\in \emptyset^{(\beta_i)}\} \leq_T \{ \langle i, x, g_i(x)\rangle \,\colon i\geq j_0,\ x\in\omega\} \leq_T g.
	\]

	The definition of the object $\Phi(\alpha;\cdot)$ can be easily recovered from the following observation: If $g(x) > f_{[\alpha]}(x)$ for all $x$, then
	\[
		\emptyset^{(\alpha)} = \Psi^X \text{ where } X=\{ \langle i, x\rangle \,\colon i\in\omega,\, x\in\emptyset^{(\beta_i)}\}, \text{ and } \emptyset^{(\beta_i)} = \Phi^{g_i}(\beta_i;\cdot) \text{ for all } i\in\omega.
	\]
	Proposition~\ref{prop:good-lm} is proved.
\end{proof}

\subsection{Proof of Theorem~\ref{theo:BA}} \label{subsect:proof-BA}

	Let $\mathcal{B}:=Int(\omega^{\alpha}\cdot m)$. Using a standard Cantor--Bendixson analysis (see, e.g., Section~17.2 of~\cite{Koppelberg}), one can show the following: If a superatomic Boolean algebra $\mathcal{A}$ is bi-embeddable with $\mathcal{B}$, then $\mathcal{B}$ and $\mathcal{A}$ are isomorphic.

	For the interested reader, we note that in Corollary~4.34 of~\cite{GreenMon}, the following fact is proved: For countable ordinals $\beta$ and $\gamma$, we have $\beta \leq \gamma$ if and only if $Int(\omega^{\beta})$ is embeddable into $Int(\omega^{\gamma})$.

	% {\color{red}
	% For the interested reader, we note that in Corollary~4.34 of~\cite{GreenMon}, the following fact is proved: For countable ordinals $\beta$ and $\gamma$, we have $\beta \leq \gamma$ if and only if $Int(\beta)$ is embeddable into $Int(\gamma)$.
	% }
	Ash (Theorem~5 in~\cite{Ash87}) proved that the structure $\mathcal{B}$ is relatively $\Delta^0_{2\alpha}$ categorical. Therefore, it is sufficient to provide two computable isomorphic copies $\mathcal{A}$ and $\mathcal{C}$ of $\mathcal{B}$ such that every isomorphic embedding $h\colon \mathcal{A} \hookrightarrow \mathcal{C}$ computes the degree $\mathbf{0}^{\langle 2\alpha\rangle}$.

	Choose a computable structure $\mathcal{C}\cong\mathcal{B}$ with the following property: Given an element $b\in\mathcal{C}$, one can effectively compute a pair $\theta(b):=(\beta, \ell)$ such that $\beta \leq \alpha$, $\ell \in \omega$, and the relative algebra $\mathcal{C}\upharpoonright b$ is isomorphic to $Int(\omega^{\beta} \cdot \ell)$. Such a copy $\mathcal{C}$ was constructed, e.g., in Proposition~15.15 of~\cite{AK00}; see~\cite{Bazh16} for a more detailed discussion.

	\textsc{Case I.} Let $\alpha$ be a successor ordinal, i.e. $\alpha = \beta+1$.

	Choose a $\mathbf{0}^{\langle 2\beta+1\rangle}$-limitwise monotonic function $f(x)$ from Proposition~\ref{prop:good-lm} satisfying the following: if a total function $g(x)$ dominates $f$, then $g\geq_T \mathbf{0}^{\langle 2\beta+2\rangle} = \mathbf{0}^{\langle 2\alpha\rangle}$.

	By Lemma~\ref{lem:Ash-001}, we have $\omega^{\beta}\cdot (k+1) \leq_{2\beta+1} \omega^{\beta}\cdot k$ for every non-zero $k\in\omega$. Furthermore, the family $\{\omega^{\beta}\cdot k\,\colon k\geq 1\}$ is $(2\beta+1)$-friendly (see, e.g., Proposition~15.11 in~\cite{AK00}). Therefore, one can apply Theorem~\ref{theo:Bazh-17} and produce a computable sequence of linear orders $(\mathcal{L}_n)_{n\in\omega}$ such that
	\[
		\mathcal{L}_n \cong \omega^{\beta} \cdot (1+f(n)).
	\]

	Consider computable Boolean algebras
	\begin{align*}
		\mathcal{D}_j &:= \sum_{n\in\omega} Int(\mathcal{L}_n),\ \text{where}\ 1\leq j\leq m,\\
		\mathcal{A} &:= \sum_{1\leq j \leq m} \mathcal{D}_j.
	\end{align*}
	For $n\in\omega$, let $e_n$ be the element induced by $\mathbf{1}^{Int(\mathcal{L}_n)}$ inside $\mathcal{D}_1$. It is not hard to show that every $\mathcal{D}_j$ is isomorphic to $Int(\omega^{\beta+1})$, and $\mathcal{A}\cong Int(\omega^{\beta+1}\cdot m)$.

	Assume that $h$ is an isomorphic embedding from $\mathcal{A}$ into $\mathcal{C}$. Then for every $n\in\omega$, the relative algebra $\mathcal{C}\upharpoonright h(e_n)$ must be isomorphic to $Int(\omega^{\beta} \cdot \ell_n)$ for some finite $\ell_n \geq 1+f(n)$. Thus, the function $\xi(n) := \ell_n$ is computable in $h$, and $\xi$ dominates $f$. Therefore, we deduce $h \geq_T \xi \geq_T \mathbf{0}^{\langle 2\alpha\rangle}$.

	%\hrule

	\textsc{Case II.}
	 Let $\alpha$ be a limit ordinal. For simplicity, we give the proof for the case when $\mathcal{B}\cong Int(\omega^{\alpha})$. A general case of $Int(\omega^{\alpha}\cdot m)$ is treated in a similar way.

	 Consider the standard fundamental sequence $(\beta_i)_{i\in\omega}$ for $\alpha$. Without loss of generality, we may assume that for every $i\in\omega$, $0<\beta_i <\beta_{i+1}$ and
	 \[
	 	\beta_i=\begin{cases}
	 		2\gamma_i, & \text{if } \beta_i <\omega,\\
	 		2\gamma_i + 1, & \text{if } \beta_i \geq \omega.
	 	\end{cases}
	 \]

	 Fix a total function $f_{[\alpha]}$ from Proposition~\ref{prop:good-lm}. Recall that for any $i$ and $x$, we have $f_{[\alpha]}(\langle i,x\rangle) = f_{[\beta_i]}(x)$, where the functions $f_{[\beta_i]}$ are $\mathbf{0}^{\langle 2\gamma_i\rangle}$-limit\-wise monotonic, uniformly in $i\in\omega$.

	 Note the following: If $\lambda \geq \gamma$ and $\kappa \geq \gamma$, then $\omega^{\lambda} \equiv_{2\gamma} \omega^{\kappa}$ (see, e.g., Lemma~15.9 in \cite{AK00}). Hence, for every $i\in\omega$, we have:
	 \[
	 	 \omega^{\gamma_i} \geq_{2\gamma_i} \omega^{\gamma_{i+1}} \geq_{2\gamma_i} \omega^{\gamma_{i+2}} \geq_{2\gamma_i} \dots \geq_{2\gamma_i} \omega^{\gamma_{i+j}} \geq_{2\gamma_i} \dots .
	 \]
	 From now on, for the sake of convenience, we use the notation $\gamma[i] := \gamma_i$.

	 We employ Theorem~\ref{theo:uniform} with the following parameters:
	 \begin{itemize}
	 	\item the sequence $\alpha_i := 2\gamma_i$;

	 	\item the $\alpha$-friendly family consisting of $\mathcal{A}^i_k := \omega^{\gamma[i+k]}$; and

	 	\item the functions $f_i := f_{[\beta_i]}$.
	\end{itemize}
	This gives us a computable sequence of linear orders:
	 \[
	 	\mathcal{L}_{\langle i,k\rangle} \cong \omega^{\gamma[i+f_{[\beta_i]}(k)]}.
	 \]
	 %[\texttt{The validity of this uniform version will be verified at a later stage}.]

	 We define a Boolean algebra
	 \[
	 	\mathcal{A} := \sum_{n\in\omega} Int(\mathcal{L}_n).
	\]
	It is not difficult to show that $\mathcal{A}$ is a computable isomorphic copy of $Int(\omega^{\alpha})$. For $n\in\omega$, let $e(n)$ denote the element induced by $\mathbf{1}^{Int(\mathcal{L}_n)}$ inside $\mathcal{A}$.

	Suppose that $h$ is an isomorphic embedding from $\mathcal{A}$ into $\mathcal{C}$. We define a total function $\xi$~--- for numbers $i$ and $k$, the value $\xi(\langle i,k\rangle)$ is computed as follows.
	\begin{enumerate}
		\item Find the ordinal $\delta <\alpha$ such that the relative algebra $\mathcal{C}\upharpoonright h(e(\langle i,k\rangle))$ is isomorphic to $Int(\omega^{\delta}\cdot \ell)$ for some $\ell \in \omega$. Clearly, $\delta\geq \gamma[i+f_{[\beta_i]}(k)]$.

		\item Define the value $\xi(\langle i,k\rangle)$ as the least number $j$ such that $\gamma[j] >\delta$.
	\end{enumerate}
	Note that $\xi(\langle i,k\rangle) > i + f_{[\beta_i]}(k) = i+f_{[\alpha]}(\langle i,k\rangle)$. Thus, the function $\xi$ dominates $f_{[\alpha]}$. Hence, by Proposition~\ref{prop:good-lm}, we deduce that $h\geq_T \xi \geq_T \mathbf{0}^{(\alpha)}$.
	This concludes the proof of Theorem~\ref{theo:BA}.
	\qed

%%%%%%%%%%%%%

\subsection{Proof of Corollary~\ref{corollary:gen-BA}} \label{subsect:coroll}

	If $\mathcal{B}$ is superatomic, then by Theorem~\ref{theo:BA}, $\mathcal{B}$ satisfies the first condition. Thus, we will assume that $\mathcal{B}$ is not superatomic, i.e. $\mathcal{B}$ is bi-embeddable with the atomless Boolean algebra $Int(1+\eta)$.

	Let $\mathcal{A}$ be a computable copy of $Int(1+\eta)$. Fix a computable linear order $\mathcal{L}$ isomorphic to $\omega^{CK}_1\cdot ( 1 + \eta) $ such that $\mathcal{L}$ has no hyperarithmetic descending chains. Set $\mathcal{C} := Int(\mathcal{L})$. Clearly, both $\mathcal{A}$ and $\mathcal{C}$ are bi-embeddable with $\mathcal{B}$.

	Towards a contradiction, assume that there is a hyperarithmetic embedding $h$ from $\mathcal{A}$ into $\mathcal{C}$. Choose a non-zero element $a\in\mathcal{A}$ such that $h(a)$ can be represented in the following form:
	\begin{equation}\label{equ:presentation}
		[u_0(a),v_0(a))_{\mathcal{L}} \cup [u_1(a), v_1(a))_{\mathcal{L}} \cup \dots \cup [u_{k(a)}(a),v_{k(a)}(a))_{\mathcal{L}},
	\end{equation}
	where $u_0(a) <_{\mathcal{L}} v_0(a) <_{\mathcal{L}} u_1(a) <_{\mathcal{L}} v_1(a)<_{\mathcal{L}}\dots <_{\mathcal{L}}u_{k(a)}(a)<_{\mathcal{L}} v_{k(a)}(a)$. Set $b_0:=a$ and $w_0 := v_{k(a)}(a)$.

	Now suppose that the elements $b_0,b_1,\dots,b_n \in\mathcal{A}$ and $w_0 >_{\mathcal{L}} w_1 >_{\mathcal{L}} \dots >_{\mathcal{L}} w_n$ are already defined. Choose arbitrary non-zero $c$ and $d$ from $\mathcal{A}$ with $c\cap d = 0^{\mathcal{A}}$ and $c\cup d = b_n$. For the elements $h(c)$ and $h(d)$, consider their representations of the form~(\ref{equ:presentation}). Clearly, we have either $v_{k(c)}(c) <_{\mathcal{L}} w_n$ or $v_{k(d)}(d) <_{\mathcal{L}} w_n$. Choose $w_{n+1}$ as the element from $\{ v_{k(c)}(c), v_{k(d)}(d)\}$ which is strictly less than $w_n$ in $\mathcal{L}$.

	Then the constructed sequence $(w_n)_{n\in\omega}$ is hyperarithmetic and strictly descending, which contradicts the choice of $\mathcal{L}$. Therefore, the structure $\mathcal{B}$ is not hyperaritmetically b.e.\ categorical. By Theorem~3.1 of~\cite{bazhenov2020}, this implies that $\mathcal{B}$ has no degree of b.e.\ categoricity.
\qed

%%%%%%%%%%%%%%%%%%%%%%%%%%%%%%%%%%%%
%%%%%%%%%%%%%%%%%%%%%%%%%%%%%%%%%%%%

\subsection{Proof of Theorem~\ref{theo:uniform}} \label{subsect:proof-uniform}

The proof employs the $\alpha$-systems technique. For a detailed exposition of this method, the reader is referred to~\cite{AK00}. Here we give only the necessary definitions and results.

For sets $L$ and $U$, an \emph{alternating tree} on $L$ and $U$ is a tree $P$ consisting of non-empty finite sequences $\sigma = l_0 u_1 l_1 u_2 \dots$, where $l_i \in L$ and $u_j \in U$. We assume that every $\sigma \in P$ has a proper extension in $P$.

An \emph{instruction function} for $P$ is a function $q$ from the set of sequences in $P$ of odd length (i.e. those with last term in $L$) to $U$, such that if $q(\sigma) = u$, then $\sigma u \in P$. A \emph{run} of $(P,q)$ is a path $\pi = l_0 u_1 l_1 u_2 \dots$ through $P$ such that
\[
	u_{n+1} = q(l_0 u_1 l_1 u_2 \dots u_n l_n)
\]
for every $n\in\omega$. An \emph{enumeration function} on $L$ is a function $E$ from $L$ to the set of all finite subsets of $\omega$. If $\pi = l_0 u_1 l_1 u_2\dots$ is a path through $P$, then $E(\pi) = \bigcup_{i\in\omega} E(l_i)$.

%Let $\alpha$ be a non-zero computable ordinal. We fix a path $Q$ through Kleene's $\mathcal{O}$ with a notation for $\alpha$ and identify ordinals $\beta \leq \alpha$ with their notations along $Q$.

Suppose that $L$ and $U$ are c.e. sets, $E$ is a partial computable enumeration function on $L$, and $P$ is a c.e. alternating tree on $L$ and $U$. We assume that all $\sigma \in P$ start with the same symbol $\hat l \in L$.

Fix a non-zero computable ordinal $\alpha$. Suppose that $\leq_{\beta}$, $\beta<\alpha$, are binary relations on $L$ such that $\leq_{\beta}$ are c.e., uniformly in $\beta < \alpha$.

The structure
\[
	(L,U,\hat l, P,E, (\leq_{\beta})_{\beta<\alpha})
\]
is an \emph{$\alpha$-system} if is satisfies the following conditions:
\begin{enumerate}
	\item $\leq_{\beta}$ is reflexive and transitive, for $\beta<\alpha$;

	\item $(l\leq_{\gamma} l') \Rightarrow (l \leq_{\beta} l')$, for $\beta < \gamma < \alpha$;

	\item if $l \leq_0 l'$, then $E(l) \subseteq E(l')$;

	\item if $\sigma u \in P$, where $\sigma$ ends in $l^0\in L$; and
	\[
		l^0 \leq_{\beta_0} l^1 \leq_{\beta_1} \dots \leq_{\beta_{k-1}} l^k,
	\]
	for $\alpha > \beta_0 > \beta_1 >\dots >\beta_{k}$, then there exists $l^{\ast} \in L$ such that $\sigma u l^{\ast} \in P$, and $l^i \leq_{\beta_i} l^{\ast}$ for all $i\leq k$.
\end{enumerate}

\begin{theorem}[Ash and Knight {\cite[Theorem 14.1]{AK00}}] \label{th:app}
	Let $(L,U,\hat l, P, E, (\leq_{\beta})_{\beta<\alpha})$ be an $\alpha$-system.
	Then for any $\Delta^0_{\alpha}$ instruction function $q$, there is a run $\pi$ of $(P,q)$ such that $E(\pi)$ is c.e., while $\pi$ itself is $\Delta^0_{\alpha}$. Moreover, from a $\Delta^0_{\alpha}$ index for $q$, together with a computable sequence of c.e. and computable indices for the components of the $\alpha$-system, we can effectively determine a $\Delta^0_{\alpha}$ index for $\pi$ and a c.e. index for $E(\pi)$.
\end{theorem}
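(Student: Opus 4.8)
The plan is to prove the metatheorem by effective transfinite recursion on the ordinal $\alpha$, reducing the task of running an $\alpha$-system against a $\Delta^0_\alpha$ instruction function to the task of running systems of strictly smaller ordinal height against instruction functions of correspondingly lower arithmetic complexity. The whole point of the axioms (1)--(4) is to make this reduction go through: the nesting conditions (1) and (2) guarantee that the back-and-forth relations $\leq_\beta$ behave coherently as $\beta$ decreases, condition (3) ties the enumeration function $E$ to $\leq_0$ so that $E$ is monotone along any $\leq_0$-increasing path, and the extension condition (4) is the crucial ``back-and-forth'' axiom that lets a finite initial segment of a run, recorded together with a descending chain of guessed levels $\beta_0 > \beta_1 > \dots > \beta_k$, be extended consistently after the opponent plays. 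I would arrange the induction so that the conclusion for $\alpha$ — existence of a run $\pi$ with $\pi \in \Delta^0_\alpha$ and $E(\pi)$ c.e., uniformly in the indices — invokes the conclusion for every $\beta < \alpha$.

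For the base case $\alpha = 1$ the instruction function $q$ is computable, and I would build $\pi = l_0 u_1 l_1 u_2 \dots$ directly and recursively. Starting from $l_0 = \hat l$, given the current odd-length position $\sigma$ ending in $l^{n}$ I set $u_{n+1} := q(\sigma)$, so that $\sigma u_{n+1} \in P$, and then search the c.e. set $P$ for some $l^{n+1}$ with $\sigma u_{n+1} l^{n+1} \in P$ and $l^{n} \leq_0 l^{n+1}$; condition (4), applied to the trivial one-term chain, guarantees that such an extension exists and is eventually enumerated, so the search is effective and $\pi$ is computable. Monotonicity of $E$ along $\pi$ follows from (3), whence $E(\pi) = \bigcup_i E(l_i)$ is c.e. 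This recursion, fed a lower-level instruction function and the lower-level relations, is in fact the engine that the inductive steps reuse.

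At the successor step $\alpha = \gamma + 1$ the instruction function $q$ is computable from $\emptyset^{(\gamma)}$, so by a limit-lemma-style reduction I obtain, uniformly, an approximation $q(\sigma) = \lim_s q(\sigma, s)$ whose stage function lies one level lower in the hierarchy. I would then construct a derived $\gamma$-system whose ``states'' encode a finite committed portion of a run of the original system together with the current approximation stage, inheriting the relations $\leq_\beta$ for $\beta < \gamma$ and the enumeration $E$; the derived instruction function is $\Delta^0_\gamma$ because the remaining $\emptyset^{(\gamma)}$-information has been absorbed into the approximation. Verifying that this derived object is genuinely a $\gamma$-system reduces its axiom (4) to axiom (4) of the original system applied along a descending chain of levels ending in $\gamma$: each time a guessed value $q(\sigma,s)$ changes, (4) lets the already-built, already-enumerated segment be re-extended consistently with both the old and the new guess at level $\gamma$, so that the limiting path $\pi$ is a well-defined run of the original system and $E(\pi)$ is still enumerated monotonically. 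Applying the inductive hypothesis at level $\gamma$ yields a $\Delta^0_\gamma$ run of the derived system, which reads off to the desired $\Delta^0_{\gamma+1}$ run $\pi$ with $E(\pi)$ c.e. For limit $\alpha$ I would use the fixed fundamental sequence $(\alpha_i)_i$ for $\alpha$ along the path through $\mathcal{O}$, amalgamating derived systems at the levels $\alpha_i$; here the fact that condition (4) is stated for an arbitrary \emph{descending} chain of back-and-forth levels, rather than a single level, is exactly what allows extensions to be stitched together as the relevant level $\alpha_i$ increases.

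The main obstacle is the successor step, specifically the verification that axiom (4) of the original $\alpha$-system delivers axiom (4) of the derived lower-level system. Concretely, one must show that whenever the approximation to $q$ revises itself, the finite run committed so far can be extended so as to respect the new opponent move while remaining $\leq_\gamma$-above the previously committed state; only then is the limit run well-defined and is $E(\pi)$ genuinely c.e. rather than merely $\Delta^0_\alpha$. Propagating the uniformity claim — producing $\Delta^0_\alpha$ and c.e. indices effectively from indices for $q$ and for the components of the system — through every stage of the transfinite recursion, and making the limit-ordinal amalgamation cohere with the chosen fundamental sequences, are the remaining points that require care.
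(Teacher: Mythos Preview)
The paper does not prove this theorem at all: it is quoted verbatim as Theorem~14.1 of Ash and Knight's monograph and used as a black box in Subsection~\ref{subsect:proof-uniform}. So there is no ``paper's own proof'' to compare your proposal against.

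For what it is worth, your outline is the standard strategy that Ash and Knight themselves use: effective transfinite recursion on $\alpha$, with the successor step carried out by passing to a derived $\gamma$-system whose labels record finite pieces of a run together with approximation data for the $\Delta^0_{\gamma+1}$ instruction function, and with axiom~(4) doing the work of re-extending after a guess changes. Your description of the base case, of the role of axioms (1)--(4), and of where the main difficulty lies (verifying axiom~(4) for the derived system and threading the uniformity through the recursion) is accurate. The only point I would flag is that in Ash and Knight's treatment the reduction is not literally ``$\alpha = \gamma+1$ to a single $\gamma$-system'' but rather proceeds via the notion of a $\emph{picture}$ and an auxiliary tree of possible runs, which packages the descending chains in axiom~(4) more explicitly; your sketch is morally the same but would need those bookkeeping devices made precise to become a proof.
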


Now we are ready to give the proof of Theorem~\ref{theo:uniform}. Essentially, the construction described below extends the proof of Theorem~18.9 from~\cite{AK00}.

%\hrule

Fix an index $e$ such that for each $i\in\omega$, the function $\Phi_e^{\emptyset^{\langle \alpha_i\rangle}}(i,\cdot,\cdot)$ approximates $f_i$ in a limitwise monotonic way. Set:
\[
	g(i,x,s) := \Phi_e^{\emptyset^{\langle \alpha_i\rangle}}(i,x,s).
\]
Note that $f_i(x) = \lim_s g(i,x,s)$.

Given two indices $M,n\in\omega$, we define (in a uniform way) the following objects:
\begin{itemize}
	\item[(a)] an $\alpha_M$-system $\textbf{S}_M = (L_M, U_M, \hat l_M, P_M, E_M, (\leq_{\beta,M})_{\beta < \alpha_M})$ (the same for all $n\in\omega$), and

	\item[(b)] a $\Delta^0_{\alpha_M}$ instruction function $q_{M,n}$, with $\Delta^0_{\alpha_M}$ index that can be computed effectively from $M$ and $n$.
\end{itemize}
The desired structure $\mathcal{C}^M_n$ is obtained from a run of $(P_M, q_{M,n})$. The uniformity of Theorem~\ref{th:app} guarantees that the structures $\mathcal{C}^M_n$ are uniformly computable.

We focus on a given pair $(M,n)$, hence, we will slightly abuse the notations, and omit the subscript $M$ in the names of our objects.

Assume that $C$ is an infinite computable set of constants, for the universe of $\mathcal{C}^{M}_n$. We also assume that all the structures $\mathcal{A}^M_k$, $k\in\omega$, have the same universe $A$. Let $\mathcal{F}$ be the set of finite partial 1-1 functions $p$ from $C$ to $A$. Let $U = \omega$ and let $L$ consist of the pairs in $\omega \times \mathcal{F}$, and one extra
element $\hat l := (-1, \emptyset)$. If $(k,p) \in \omega \times \mathcal{F}$, then $(k,p)$ represents $(\mathcal{A}^M_k,p)$.

The \emph{standard enumeration function} $E^{st}$ is defined as follows. Assume that $m\in \omega$, and $\bar x$ is the sequence of the first $m$ variables. For an $m$-tuple $\bar a$ in an $L$-structure $\mathcal{B}$, the set $E^{st}(\mathcal{B},\bar a)$ consists of the basic formulas $\psi(\bar x)$, with G\"odel number less than $m$, such that $\mathcal{B} \models \psi(\bar a)$.

Following~\cite{AK00}, we extend the standard enumeration function and the standard back-and-forth relations on pairs $(\mathcal{A}^M_k,\bar a)$ to tuples $(k,p) \in L$. If $l = (k,p)$, where $k\in\omega$ and $p$ maps $\bar b$ to $\bar a$, then $E(l) = E^{st}(\mathcal{A}^M_k, \bar a)$. We set $E(\hat l) = \emptyset$.

Assume that $\beta <\alpha_M$. If $l = (k,p)$ and $l' = (j,q)$, where $p$ maps $\bar b$ to $\bar a$ and $q$ maps $\bar b'$ to $\bar a'$, then $l \leq_{\beta} l'$ if and only if $\bar b \subseteq \bar b'$ and $(\mathcal{A}^M_k, \bar a) \leq_{\beta} (\mathcal{A}^M_j, \bar a')$. We write $l \subseteq l'$ if $k = j$ and $p \subseteq q$. For all $l\in L$, we let $\hat l \leq_{\beta} l$ and $\hat l \subseteq l$.

The tree $P$ consists of the finite sequences $\sigma = \hat l u_1 l_1 u_2 \dots$ such that $u_k \in U$, $l_k\in L$, and the following conditions hold:
\begin{enumerate}
	\item $l_k$ has the form $(u_k, p_k)$;

	\item $dom(p_k)$ and $ran(p_k)$ include the first $k$ elements of the sets $C$ and $A$, respectively;

	\item $u_1 = 0$;

	\item if $u_{k+1} \neq u_k$, then $u_{k+1} = u_k + 1$;

	\item if $u_k = u_{k+1}$, then $l_k \subseteq l_{k+1}$, and in any case, $l_k \leq_0 l_{k+1}$.
\end{enumerate}
We have described all ingredients of our $\alpha_M$-system. Note that the description is uniform in $M\in\omega$.

\begin{lemma}
	$(L, U, \hat l, P, E, (\leq_{\beta})_{\beta < \alpha_M})$ is an $\alpha_M$-system.
\end{lemma}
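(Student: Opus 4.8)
The plan is to verify, one by one, the four defining conditions of an $\alpha_M$-system for the structure $(L, U, \hat l, P, E, (\leq_{\beta})_{\beta < \alpha_M})$ we have just described, relying throughout on the corresponding properties of the standard back-and-forth relations on the $\alpha_M$-friendly family $\{\mathcal{A}^M_k\}_{k\in\omega}$ and on the hypothesis $\mathcal{A}^M_{k+1} \leq_{\alpha_M} \mathcal{A}^M_k$. First I would note that all the set-theoretic components are manifestly c.e./partial computable uniformly in $M$: $L = (\omega\times\mathcal F)\cup\{\hat l\}$ and $U=\omega$ are computable, $E$ is partial computable since $E^{st}$ is, $P$ is c.e. because each of conditions (1)--(5) in its definition is c.e. (the only semidecidable clause being $l_k\leq_0 l_{k+1}$), and the relations $\leq_{\beta}$ on $L$ are c.e. uniformly in $\beta<\alpha_M$ because they unfold to the c.e. conditions $\bar b\subseteq\bar b'$ and $(\mathcal{A}^M_k,\bar a)\leq_\beta(\mathcal{A}^M_j,\bar a')$ from the friendliness of the family, together with the stipulation $\hat l\leq_\beta l$.

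Next I would check conditions (1)--(3). Reflexivity and transitivity of $\leq_\beta$ on $L$ follow from reflexivity and transitivity of the standard back-and-forth relations on pairs together with the trivial fact that $\subseteq$ on tuples is reflexive and transitive, and from the convention $\hat l\leq_\beta l$ for all $l$ (note $\hat l\leq_\beta\hat l$ holds and $\hat l\leq_\beta l\leq_\beta l'$ forces nothing new since $\hat l\leq_\beta l'$ is stipulated). Monotonicity in the index, i.e. $l\leq_\gamma l'\Rightarrow l\leq_\beta l'$ for $\beta<\gamma<\alpha_M$, is inherited directly from the analogous property of $\leq_\gamma$ and $\leq_\beta$ on pairs $(\mathcal{A}^M_k,\bar a)$ (a standard property of back-and-forth relations), plus the fact that the $\bar b\subseteq\bar b'$ clause does not depend on $\beta$. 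Condition (3), that $l\leq_0 l'$ implies $E(l)\subseteq E(l')$, is exactly the content of how $E$ is built from $E^{st}$: if $l=(k,p)$ with $p:\bar b\mapsto\bar a$ and $l'=(j,q)$ with $q:\bar b'\mapsto\bar a'$ and $\bar b\subseteq\bar b'$ with $(\mathcal{A}^M_k,\bar a)\leq_0(\mathcal{A}^M_j,\bar a')$, then every basic formula true of $\bar a$ in $\mathcal{A}^M_k$ of small enough G\"odel number is a $\Pi_0$-fact preserved by $\leq_0$, hence true of the corresponding subtuple of $\bar a'$ in $\mathcal{A}^M_j$, giving $E^{st}(\mathcal{A}^M_k,\bar a)\subseteq E^{st}(\mathcal{A}^M_j,\bar a')$; and $E(\hat l)=\emptyset$ handles the base case.

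The substantive step is condition (4), the amalgamation/extension property for $P$. Given $\sigma u\in P$ with $\sigma$ ending in $l^0=(k,p_0)$, and a descending chain $l^0\leq_{\beta_0}l^1\leq_{\beta_1}\dots\leq_{\beta_{k-1}}l^k$ with $\alpha_M>\beta_0>\dots>\beta_k$, I must produce $l^\ast$ with $\sigma u l^\ast\in P$ and $l^i\leq_{\beta_i}l^\ast$ for all $i$. The case split is on whether $u$ continues the current index ($u=u_{last}$, i.e. $l^\ast$ must $\supseteq$-extend $l^0$) or increments it ($u=u_{last}+1$). In the first case I would invoke the standard amalgamation theorem for back-and-forth relations within a single $\alpha_M$-friendly family (this is exactly the property used in the proof of Theorem~18.9 of~\cite{AK00}): the chain of pairs $(\mathcal{A}^M_u,\bar a^0)\leq_{\beta_0}(\mathcal{A}^M_{j_1},\bar a^1)\leq\dots$ amalgamates to a pair $(\mathcal{A}^M_u,\bar a^\ast)$ extending $\bar a^0$ with $(\mathcal{A}^M_{j_i},\bar a^i)\leq_{\beta_i}(\mathcal{A}^M_u,\bar a^\ast)$, from which we read off $p^\ast\supseteq p_0$ and set $l^\ast=(u,p^\ast)$, padding $\mathrm{dom}$ and $\mathrm{ran}$ to include the required finitely many new constants (legitimate since $\leq_0$ is preserved under such extension by the back-and-forth amalgamation). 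In the incrementing case we additionally use $\mathcal{A}^M_{u}\leq_{\alpha_M}\mathcal{A}^M_{u-1}$, equivalently $\mathcal{A}^M_{k+1}\leq_{\alpha_M}\mathcal{A}^M_k$: since every $\beta_i<\alpha_M$, the pair over index $u-1$ back-and-forth relates down to a pair over index $u$, letting us first pass each $l^i$ from its old index into index $u$ and then amalgamate as before. I expect this condition (4) to be the main obstacle, since it is where the quantitative hypothesis on the family is actually consumed and where one must be careful that the amalgamated function still satisfies clauses (2) and (5) of the definition of $P$; but it is a routine adaptation of the argument in~\cite{AK00}, so the write-up should be short, and the uniformity in $M$ is automatic because every step refers only to the uniformly presented components of the family.
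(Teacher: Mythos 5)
Your proposal is correct and follows essentially the same route as the paper: a routine verification of the effectivity clauses and of conditions (1)--(3), with the real work in condition (4) done by the standard back-and-forth amalgamation for the $\alpha$-friendly family plus the hypothesis $\mathcal{A}^M_{k+1}\leq_{\alpha_M}\mathcal{A}^M_k$ in the index-incrementing case. The only cosmetic difference is the order of operations in that case: the paper first amalgamates the entire chain onto the index $v_0$ of $l^0$, obtaining a single pair $(v_0,p^{\ast}_0)$ with $l^i\leq_{\beta_i}(v_0,p^{\ast}_0)$, and only then passes that one pair down to index $v_0+1$ via $\mathcal{A}^M_{v_0+1}\leq_{\alpha_M}\mathcal{A}^M_{v_0}$ --- slightly cleaner than your plan of passing each $l^i$ into index $u$ individually, which would require relating $\mathcal{A}^M_u$ to $\mathcal{A}^M_{v_i}$ for arbitrary indices $v_i$ in the chain.
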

\begin{proof}
	It is easy to check that the sets $L,U,P$ are c.e. and the function $E$ is partial computable. Since the family $\{\mathcal{A}^i_k\,\colon i,k\in\omega\}$ is $\alpha$-friendly, the relations $\leq_{\beta}$ are c.e. uniformly in $\beta < \alpha_M$ (note that this fact is true even for all $\beta < \alpha$, but we will not use this). It is not hard to verify the first three conditions from the definition of an $\alpha_M$-system.

	Assume that $\sigma u\in P$, where $\sigma$ ends in $l^0$, and
	\[
		l^0 \leq_{\beta_0} l^1 \leq_{\beta_1} \dots \leq_{\beta_{k-1}} l^k,
	\]
	for $\alpha_M > \beta_0 > \dots >\beta_k$. We want to find $l^{\ast}\in L$ such that $\sigma u l^{\ast}\in P$ and $l^i \leq_{\beta_i} l^{\ast}$ for all $i\leq k$. Suppose that $l^i = (v_i,p_i)$. Using the properties of the standard back-and-forth relations (see \cite[\S 15.1]{AK00}), one can find functions $p^{\ast}_i \supseteq p_i$, $i\leq k$, such that $p^{\ast}_k = p_k$ and $(v_i, p^{\ast}_i) \leq_{\beta_{i}} (v_{i-1}, p^{\ast}_{i-1})$, for $0<i\leq k$.  Note that $l^i \leq_{\beta_i} (v_0, p^{\ast}_0)$ for all $i\leq k$.

	If $u = v_0$, then we build an extension $q\supseteq p^{\ast}_0$ (to include the necessary elements in the domain and range of our function) and obtain the desired $l^{\ast} = (v_0, q)$. Now assume that $u=v_0 + 1$. Since $\mathcal{A}^M_{v_0 + 1} \leq_{\alpha_M} \mathcal{A}^M_{v_0}$, there is a function $q_0$ such that $(v_0,p^{\ast}_0) \leq_{\beta_0} (v_0+1, q_0)$. Again, we find a suitable extension $q\supseteq q_0$ and produce the desired $l^{\ast} = (v_0+1, q)$.
\end{proof}

The instruction function $q_{M,n}$ is defined as follows. We set $q_{M,n}(\hat l) = 0$. For an element $\sigma = \hat l u_1 l_1 \dots u_s l_s$ in $P$, let
\[
	q_{M,n}(\sigma) = \left\{
		\begin{array}{ll}
			u_{s}+1, & \text{~if~} g(M,n,s) > u_s,\\
			u_s, & \text{~otherwise}.
		\end{array}
	\right.
\]
Given numbers $M,n$ and a $\Delta^0_{\alpha_M}$ index for the function $g(M,\cdot,\cdot)$, we can compute a $\Delta^0_{\alpha_M}$ index for $q_{M,n}$.

We apply Theorem~\ref{th:app}, and for each $M$ and $n$, we obtain a run $\pi_{M,n} = \hat l u^{M,n}_1 l^{M,n}_1 u^{M,n}_2 \dots$ of $(P_M, q_{M,n})$ such that $E(\pi_{M,n})$ is c.e., uniformly in $M,n$. Suppose that $l^{M,n}_s = (u^n_s, p^n_s)$.

Recall that $g(M,n,s) \leq g(M,n,s+1)$ and $\lim_s g(M,n,s) = f_M(n)$. Hence, it is easy to show that for all $s$, we have $u^n_s \leq g(M,n,s)$. Moreover, there is a stage $s^{\ast}$ such that $u^n_s = f_M(n)$ for all $s \geq s^{\ast}$. The mapping $F^{-1} = \bigcup_{s \geq s^{\ast}} p^n_s$ is a 1-1 function from $C$ onto $\mathcal{A}^M_{f_M(n)}$. The map $F$ induces a structure $\mathcal{C}^M_n$ on the universe $C$ such that $D(\mathcal{C}^M_n) = E(\pi_{M,n})$.

So, we constructed a uniformly computable sequence $\{\mathcal{C}^M_n\}_{M,n\in\omega}$ such that $\mathcal{C}^M_n \cong \mathcal{A}^M_{f_M(n)}$ for all $M$ and $n$. Theorem~\ref{theo:uniform} is proved.
\qed

\bibliographystyle{alpha}
\bibliography{References}

\begin{thebibliography}{BDKM19}

\bibitem[AK90]{AK90}
C.~J. Ash and J.~F. Knight.
\newblock Pairs of recursive structures.
\newblock {\em Ann. Pure Appl. Logic}, 46(3):211--234, 1990.

\bibitem[AK00]{AK00}
C.~J. Ash and J.~F. Knight.
\newblock {\em Computable structures and the hyperarithmetical hierarchy},
  volume 144 of {\em Studies in Logic and the Foundations of Mathematics}.
\newblock Elsevier Science B.V., Amsterdam etc., 2000.

\bibitem[AKMS89]{AKMS-89}
Chris Ash, Julia Knight, Mark Manasse, and Theodore Slaman.
\newblock Generic copies of countable structures.
\newblock {\em Ann. Pure Appl. Logic}, 42(3):195--205, 1989.

\bibitem[AR]{alvir2018}
Rachael Alvir and Dino Rossegger.
\newblock The complexity of {{Scott}} sentences of scattered linear orderings.
\newblock {\em submitted for publication}.

\bibitem[Ash86]{Ash86}
C.~J. Ash.
\newblock Recursive labelling systems and stability of recursive structures in
  hyperarithmetical degrees.
\newblock {\em Trans. Amer. Math. Soc.}, 298(2):497--514, 1986.

\bibitem[Ash87]{Ash87}
C.~J. Ash.
\newblock Categoricity in hyperarithmetical degrees.
\newblock {\em Ann. Pure Appl. Logic}, 34(1):1--14, 1987.

\bibitem[Ash98]{Ash-98}
Chris~J. Ash.
\newblock Isomorphic recursive structures.
\newblock In {Yu.}~L. Ershov, S.~S. Goncharov, A.~Nerode, and J.~B. Remmel,
  editors, {\em Handbook of recursive mathematics, {V}ol.\ 1}, volume 138 of
  {\em Stud. Logic Found. Math.}, pages 167--181. North-Holland, Amsterdam,
  1998.

\bibitem[Baz13]{Bazh-13}
Nikolay~A. Bazhenov.
\newblock Degrees of categoricity for superatomic {B}oolean algebras.
\newblock {\em Algebra and Logic}, 52(3):179--187, 2013.

\bibitem[Baz16]{Bazh16}
Nikolay~A. Bazhenov.
\newblock Degrees of autostability relative to strong constructivizations for
  {B}oolean algebras.
\newblock {\em Algebra Logic}, 55(2):87--102, 2016.

\bibitem[Baz17]{Bazh17}
Nikolay Bazhenov.
\newblock Turing computable embeddings, computable infinitary equivalence, and
  linear orders.
\newblock In J.~Kari, F.~Manea, and I.~Petre, editors, {\em Unveiling Dynamics
  and Complexity}, volume 10307 of {\em Lect. Notes Comput. Sci.}, pages
  141--151. Springer, Cham, 2017.

\bibitem[BDKM19]{BDKM-19}
Nikolay Bazhenov, Rod Downey, Iskander Kalimullin, and Alexander Melnikov.
\newblock Foundations of online structure theory.
\newblock {\em Bull. Symb. Log.}, 25(2):141--181, 2019.

\bibitem[BFRS]{bazhenov2020}
Nikolay Bazhenov, Ekaterina Fokina, Dino Rossegger, and Luca {San Mauro}.
\newblock Degrees of bi-embeddable categoricity.
\newblock {\em to appear in Computability}.

\bibitem[BFRS19]{bazhenov2018a}
Nikolay Bazhenov, Ekaterina Fokina, Dino Rossegger, and Luca {San Mauro}.
\newblock Degrees of bi-embeddable categoricity of equivalence structures.
\newblock {\em Archive for Mathematical Logic}, 58(5--6):543--563, 2019.

\bibitem[CFS13]{CFS-2013}
Barbara~F. Csima, Johanna N.~Y. Franklin, and Richard~A. Shore.
\newblock Degrees of categoricity and the hyperarithmetic hierarchy.
\newblock {\em Notre Dame J. Formal Logic}, 54(2):215--231, 2013.

\bibitem[Chi90]{Chi-90}
John Chisholm.
\newblock Effective model theory vs. recursive model theory.
\newblock {\em J. Symb. Log.}, 55(3):1168--1191, 1990.

\bibitem[CR98]{CR-98}
Douglas Cenzer and Jeffrey~B. Remmel.
\newblock Complexity theoretic model theory and algebra.
\newblock In {Yu.}~L. Ershov, S.~S. Goncharov, A.~Nerode, and J.~B. Remmel,
  editors, {\em Handbook of recursive mathematics, {V}ol.\ 1}, volume 138 of
  {\em Stud. Logic Found. Math.}, pages 381--513. North-Holland, Amsterdam,
  1998.

\bibitem[FHM14]{FHM-14}
Ekaterina~B. Fokina, Valentina Harizanov, and Alexander Melnikov.
\newblock Computable model theory.
\newblock In R.~Downey, editor, {\em Turing's legacy: Developments from
  Turing's ideas in logic}, volume~42 of {\em Lect. Notes Logic}, pages
  124--194. Cambridge University Press, Cambridge, 2014.

\bibitem[FKM10]{fokina2010}
Ekaterina~B. Fokina, Iskander Kalimullin, and Russell Miller.
\newblock Degrees of categoricity of computable structures.
\newblock {\em Archive for Mathematical Logic}, 49(1):51--67, 2010.

\bibitem[FRSM19]{fokina2019a}
Ekaterina Fokina, Dino Rossegger, and Luca San~Mauro.
\newblock Bi-embeddability spectra and bases of spectra.
\newblock {\em Mathematical Logic Quarterly}, 65(2):228--236, 2019.

\bibitem[GM08]{GreenMon}
Noam Greenberg and Antonio Montalb{\'a}n.
\newblock Ranked structures and arithmetic transfinite recursion.
\newblock {\em Trans. Am. Math. Soc.}, 360(3):1265--1307, 2008.

\bibitem[Gon73]{Gon-73}
Sergei~S. Goncharov.
\newblock Constructivizability of superatomic {B}oolean algebras.
\newblock {\em Algebra and Logic}, 12(1):17--22, 1973.

\bibitem[Gon97]{Gon-97}
Sergei~S. Goncharov.
\newblock {\em Countable {B}oolean algebras and decidability}.
\newblock Consultants Bureau, New York, 1997.

\bibitem[Jul]{jullien1969}
Pierre Jullien.
\newblock Contribution \`a l'\'etude des types d'ordres dispers\'es.

\bibitem[Kop89]{Koppelberg}
Sabine Koppelberg.
\newblock {\em Handbook of {B}oolean algebras. {V}ol. 1}.
\newblock North-Holland, Amsterdam, 1989.
\newblock edited by J. D. Monk and R. Bonnet.

\bibitem[Mal62]{Mal62}
Anatoli{\"i}~I. Mal'tsev.
\newblock On recursive abelian groups.
\newblock {\em Sov. Math. Dokl.}, 3:1431--1434, 1962.

\bibitem[Mel17]{Mel-17}
Alexander~G. Melnikov.
\newblock Eliminating unbounded search in computable algebra.
\newblock In J.~Kari, F.~Manea, and I.~Petre, editors, {\em Unveiling Dynamics
  and Complexity}, volume 10307 of {\em LNCS}, pages 77--87. Springer, Cham,
  2017.

\bibitem[Mon05]{montalban2005}
Antonio Montalb\'an.
\newblock Up to equimorphism, hyperarithmetic is recursive.
\newblock {\em The Journal of Symbolic Logic}, 70(2):360--378, 2005.

\bibitem[Mon06]{montalban2006}
Antonio Montalb\'an.
\newblock Equivalence between {{Fra\"iss\'e}}'s conjecture and {{Jullien}}'s
  theorem.
\newblock {\em Annals of Pure and Applied Logic}, 139(1-3):1--42, 2006.

\bibitem[Soa16]{Soare-New}
Robert~I. Soare.
\newblock {\em Turing computability. Theory and applications}.
\newblock Springer, Berlin, 2016.

\bibitem[Spe55]{spector1955}
Clifford Spector.
\newblock Recursive well-orderings.
\newblock {\em The Journal of Symbolic Logic}, 20(2):151--163, 1955.

\end{thebibliography}

\end{document}